\documentclass[english,11pt]{article}
\usepackage[german,french,english]{babel}
\usepackage[cp850]{inputenc}
\usepackage{latexsym,graphicx, fancybox}
\usepackage{graphicx}
\usepackage{booktabs}

\usepackage{amssymb, amsmath, amsfonts}
\usepackage{amsmath, amssymb}
\usepackage{color}
\usepackage{tikz}
\usepackage{latexsym}

\usepackage[colorlinks=true, allcolors=blue]{hyperref}    
\usepackage{tocloft}        
\usepackage{enumitem}

\usepackage{mathrsfs}

\usepackage{float}   
\usepackage{graphicx}  
\usepackage{hyperref}  
\usepackage{latexsym}  
\usepackage{xcolor}   
\usepackage{lipsum}   
\usepackage{subcaption}

\usepackage{amssymb}  
\usepackage{amsmath}  
\usepackage{amsbsy}
\usepackage{amsthm}
\usepackage{bbm}    
\usepackage{eucal}   
\usepackage{mathrsfs}  

\usepackage{multirow}  
\usepackage{ulem}    
\usepackage{dsfont}

\usepackage[round]{natbib}
\usepackage[textsize=small]{todonotes}

\usepackage{etoolbox}
\apptocmd{\thebibliography}{\setlength{\itemsep}{5.7pt}}{}{}

\usepackage{microtype}
	\usepackage{amsthm}
	
	\renewcommand{\d}{\mathrm{d}}
	\newcommand{\dd}{\,\mathrm{d}}
	\newcommand{\R}{\mathbb{R}}
	\newcommand{\N}{\mathbb{N}}
	\newcommand{\E}{\mathbb{E}}
	\renewcommand{\P}{\mathbb{P}}

	\newcommand{\p}{\mathbb{P}}
	\newcommand{\F}{\mathbb{F}}
	
	\newcommand{\cF}{{\mathcal F}}
	\newcommand{\cA}{{\mathcal A}}

	\newcommand{\as}{\mbox{{\rm a.s.}}}

	\usepackage{authblk}
	
	\newcommand{\T}{\top}
	\renewcommand{\c}{\alpha}
	\newcommand{\Mid}{{\ \Big|\ }}
	\newcommand{\Acal}{{\mathcal A}}
	\newcommand{\V}{\mathrm{Var}}

	\DeclareMathOperator{\diag}{diag}

	\newtheorem{Theorem}{Theorem}[section]
	\newtheorem{Definition}[Theorem]{Definition}
	\newtheorem{Proposition}[Theorem]{Proposition}

	\newtheorem{Lemma}[Theorem]{Lemma}
	\newtheorem{Corollary}[Theorem]{Corollary}
	\newtheorem{Remark}[Theorem]{Remark}

	\newtheorem{assumption}{Assumption}[section]

	\usepackage{thm-restate}
	\usepackage{float}
	\usepackage{hyperref}
	
\title{On explicit solutions to a class of quadratic BSDEJs driven by affine Volterra processes with jumps and applications\footnote{ The work of the first author is partially supported by
		Agence Nationale de la Recherche (ReLISCoP grant ANR-21-CE40-0001).  The second author is supported by \textit{Ecole Doctorale Sciences Mathematiques de Paris Centre}.}}

\author[1,2]{Sigui Brice Dro\footnote{E-mail: {\tt dro@lpsm.paris} }}
\author[1]{Emmanuel Gnabeyeu\footnote{E-mail: {\tt emmanuel.gnabeyeu\_mbiada@sorbonne-universite.fr} }}

\affil[1]{Laboratoire de Probabilit\'es, Statistique et Mod\'elisation, UMR 8001, Sorbonne Universit\'e and Universit\'e Paris Cit\'e, 4 pl. Jussieu, F-75252 Paris Cedex 5, France.}
\affil[2]{BPCE SA, 7 promenade Germaine Sablon Paris 75013, France.}

\begin{document}
\selectlanguage{english}
\maketitle

\vspace{-1cm}
\renewcommand{\abstractname}{Abstract}
\begin{abstract}
	In this paper we consider a class of quadratic BSDEs with jumps (quadratic BSDEJs) involving inhomogeneous affine Volterra processes and show that their solution can be reduced to solving a system of generalized inhomogeneous integral Riccati-Volterra ordinary differential equations with L\'evy jump compensators. 
    This yields a rich and flexible class of quadratic BSDEJs that are analytically tractable, in the sense that their solutions are explicit up to the solution of an associated integral Riccati-Volterra ODE with L\'evy jump compensator. 
    As an application, we provide analytically tractable solutions to the continuous-time Markowitz mean-variance portfolio selection problem within a multivariate class of affine Volterra models allowing jumps driven by an independent Poisson random measure. In this  non-Markovian and non-semimartingale market framework with unbounded random coefficients, the classical stochastic control approach cannot be directly applied to the associated optimization
	task. Instead, the problem is tackled using the martingale optimality principle by constructing a family of submartingale processes characterized via solutions to a novel Riccati backward stochastic differential equation with jumps (Riccati BSDEJ), particular subclass of the aforementionned quadratic BSDEJ.
	 Specifically, we obtain analytical closed-form expressions for the optimal feedback control 
	  as well as the mean-variance efficient frontier, both of which depend on the solution to the associated multivariate inhomogeneous Riccati-Volterra system, while the optimal value function is expressed using the solution to this original Riccati BSDEJ.
	 Furthermore, 
     numerical experiments\footnote{The code is available in a \href{https://github.com/Emma-Gnabeyeu/Theses_EMG/blob/main/Notebooks/Mean-VarOptim.ipynb}{Python notebook}} on a two-dimensional fake stationary rough Heston model introduce in~\cite{Gnabeyeu2026b}, is discussed and used to highlight the impact of stabilized rough volatilities on the Markowitz allocation problem.
\end{abstract}

\noindent\textbf{\noindent {Keywords:}} Affine Volterra Processes, Backward SDE with Jumps (BSDEJ), Riccati-Volterra Equations, 
Stochastic Operations Research, Stochastic Optimal Control of Volterra Integral Jump Diffusions.

\medskip
\noindent\textbf{Mathematics Subject Classification (2020):} \textit{ 34A08, 34A34, 45D05, 60G10, 60G22, 60H10, 91B70, 91G80,93E20}

\tableofcontents
\vspace{-.3cm}
\section{Introduction}
\vspace{-.3cm}

\medskip
\noindent
In stochastic control theory and mathematical finance, backward stochastic differential equations (BSDEs for short) have become fundamental tools since their general mathematical study by \cite{pardoux1990adapted}. These equations provide a powerful framework for modeling dynamic optimization problems, valuation and hedging of claim contingent in incomplete markets or solving control problems under uncertainty. The seminal work of El Karoui, Peng and Quenez \cite{el1997backward} recalled the connection between BSDEs and some partial differential equations, demonstrating the versatility of BSDEs across diverse applications including utility maximization, optimal portfolio selection and hedging in incomplete markets. Over the past three decades, BSDEs have evolved into one of the most active areas of research in applied mathematics and financial mathematics, with far-reaching implications for both theory and numerical point of view.

\medskip
\noindent
For linear BSDEs explicit closed-form solutions can be readily obtained, making them analytically tractable. However, many real-world problems in finance require BSDEs with nonlinear driver functions, particularly those exhibiting quadratic growth in the control variable $Z$. Such quadratic growth appears naturally in utility maximization problem or risk-sensitive control for example. The pioneering work of \cite{Kobylanski2000} adressed this type of BSDEs in the pure Brownian motion setting, establishing existence and uniqueness results under appropriate hypothesis. Nonetheless, the introduction of jumps which are essential for discontinuous phenomena in financial markets significantly complicates the analysis.\\
The extension of quadratic BSDEs to jump-diffusion models has been extensively studied by Morlais (\cite{morlais2009utility}, \cite{Morlais2010}). When jumps components are present, finding explicit solutions in the nonlinear case becomes substantially more challenging, and the complexity of the resulting equations typically necessitates rescourse to numerical schemes. Among others, we have 
\cite{lejay2014numerical} which is based on discretization Brownian motion and the Poisson measure and \cite{gnoatto2022deep} that use deep learning to solve them in high dimensions, but these methods remains computationally intensive and do not yield analytical insights into the structure of the solution. The lack of explicit solutions creates a significant bottleneck for theorical and pratical implementation in financial applications.\\
A major breakthrough in the pursuit of explicit solutions came from the recogniton that imposing additional structure on some inputs data could yields analytically tractable classes of BSDEs. In \cite{richter2014explicit}, A. Richter shows that within the class of Markovian affine volatility models, it is possible to reduce certain classes of BSDEs to a system of matrix-valued  ordinary differential equations (ODEs), thereby obtaining explicit solutions in a constructive manner. 

\medskip
\noindent
The class of Markovian volatility model is a little bit restrictive as the empirical observation show 
that implied and realized volatilities of major financial indices exhibit sample paths with low H\"older regularity~\citep{GatheralJR2018}, significantly rougher than those generated by classical Brownian-motion-driven models, has fundamentally reshaped the modeling of asset price dynamics and fostered the development of rough volatility models. 
Building on the widespread practical success of the celebrated~\cite{Heston1993} stochastic volatility model, several rough extensions have been developed.  Among the most prominent is the rough Heston model introduced by~\cite{el2019characteristic}, which is rooted in insights from market microstructure. This framework was subsequently generalized to the Volterra Heston model in~\cite{abi2019affine}, and more recently to the so-called \textit{fake stationary Volterra Heston model} proposed in~\cite{EGnabeyeuPR2025, EGnabeyeuR2025} with the aim of providing a unified and consistent framework that captures both short- and long-maturity behaviors, while allowing robust fitting across the entire term structure. This broader class of models encompasses the aforementioned specifications and is constructed by modeling the volatility process as a stochastic Volterra equation of convolution type with a time-dependent diffusion coefficient. 
Moreover, empirical studies based on high-frequency financial data (see e.g., ~\cite{TodorovTauchen2011}) strongly indicate that realistic asset volatility should account for both path roughness and discontinuous movements induced by jumps; see also~\cite{Xia2022} for a detailed discussion of volatility derivative models that simultaneously incorporate rough volatility and volatility jumps. Motivated by these empirical findings, \cite{dro2026optimal} extends affine stochastic Volterra models of stock volatility to accommodate jump dynamics.

\medskip
\noindent
In the present paper, we build on this line of research by considering quadratic BSDEs with jumps in this substantially richer modeling framework, which incorporates both roughness and memory effects, as well as Brownian and Poisson sources of randomness. Specifically, we study quadratic BSDEJs driven by inhomogeneous affine Volterra processes with jumps.
Affine Volterra models generalize affine classical processes by incorporating path-dependent effects through Volterra-type Kernels, thereby capturing memory and rough-path phenomena.  The literature on stochastic Volterra equations with jumps is instead very recent, and shows a growing interest. \cite{AlfonsiSzulda2025} establish conditions upon the kernel  and coefficients for the strong existence and pathwise uniqueness of a non-negative c\`adl\`ag solution to stochastic volterra integral equation with jumps. \cite{BondiLivieriPulido2024, dro2026optimal} derived, under an affine structure imposed upon the coefficients, a semi-explicit formula for the Fourier-Laplace transform of the solution.\\
Our main contribution is to show that quadratic BSDEJs under affine Volterra dynamics with jumps can be reduced to solving a system of generalized inhomogeneous integral Riccati-Volterra ordinary differential equations. This reduction yields a rich and flexible class of analytically tractable quadratic BSDEJs, where solutions are explicit up to the solution of an associated integral Riccati-Volterra ODE with L\'evy jump compensators.

\medskip
\noindent
As a key application, we provide analytically tractable solutions to the continuous-time Markowitz mean-variance optimization problem within a multivariate class of affine Volterra models allowing jumps driven by a Poisson random measure as proposed in~\cite{dro2026optimal}. The mean-variance criterion in portfolio allocation problem pioneered by~\cite{Markowitz1952}'s seminal work 
is one of the classical problems from mathematical finance
in which investment decisions rules are made according to a trade-off  between the return of the investment and the associated risk. 
Due to its intuitive appeal and analytical tractability, the Markowitz mean variance framework has become a cornerstone of mo\-dern portfolio management in both theory and practice.
Over the past decades, an extensive body of literature has sought to extend the original static formulation to a continuous-time setting. Early contributions focused on the classical Black-Scholes framework in complete markets, most notably the seminal work of~\cite{ZhouLi2000}. Subsequent research broadened the scope to more general market environments featuring random coefficients and multiple assets; see, for
instance,~\cite{LimZhou2002,Lim2004,JeanblancEtAl2012,ChiuWong2014,Shen2015}. 
These works approach the problem by drawing on tools from convex optimization, stochastic linear-quadratic (LQ) control theory, and backward stochastic differential equations (BSDEs). One of our goals is to extend the results of~\cite{HanWong2020a,AbiJaberMillerPham2021} by incorporating jumps into the affine stochastic Volterra integral equations describing the volatility process following a verification procedure of the martingale optimality principle.

\medskip
\noindent {\sc \textbf{Organization of the Work.}}  
\noindent The rest of the paper is organized as follows.  Section~\ref{sect-Note_prelim} introduces the necessary notation and provides an overview and characterization of affine Volterra processes with jumps in \(\R^d_+\), which will be used throughout the paper. In section~\ref{sect:Main_solBDSEJ}, we study explicit solutions to a class of quadratic BSDEs with jumps driven by these Volterra processes and their connection with generalized inhomogeneous integral
Riccati-Volterra equation with L\'evy jump compensator. This section is divided into three parts.
We first derive heuristically candidate expressions for the Brownian stochastic integrand, denoted by $Z$, and the Poisson stochastic integrand variable, denoted by $U$, in Sections~\ref{subsect:Intuition_Brownian} and~\ref{subsect:Intuition_jumps}, under a specific (degenerate) multidimensional structure induced by a distortion transformation. We then consider the general case in Section~\ref{subsect:Gen_Case}, where we provide explicit solutions through a verification argument, without imposing any restrictions on the aforementioned structure.
The results are then applied to the classical Markowitz portfolio optimization problem  in Section~\ref{sect-appl}, within a multi-asset financial market in which volatility is modeled by a multivariate class of \textit{Volterra square-root processes with jumps} driven by an independent Poisson random measure.
For such a market model, we derive an explicit solution to the continuous-time Markowitz mean-variance optimization problem under a more general correlation structure between asset returns and their respective volatilities. 
The analysis relies both on the BSDEJ results developed in Section~\ref{sect:Main_solBDSEJ}, whose solutions are explicitly characterized in terms of associated Riccati--Volterra equations with jumps, and on a verification argument based on the martingale optimality principle.
In Section~\ref{Sec:Num}, we demonstrate the practical implications of our findings through numerical experiments based on a two-dimensional fake stationary rough Heston volatility model introduced in~\cite{Gnabeyeu2026b} in order to maintain a consistent modeling framework across time scales, from short to long maturities.
Finally, Section~\ref{sect:proofMresult} is devoted to some additional technical proofs of the results.
  
\section{Notations and characterization of affine Volterra processes with jumps}\label{sect-Note_prelim} 
\noindent We first introduce the notation and basic framework in Section~\ref{sect-Notation}, and then present streamlined versions of affine Volterra processes with jumps in Section~\ref{sect-prelim}.

\subsection{Notations.}\label{sect-Notation}
\noindent We start with the notation we use subsequently. From now on, let $T > 0$ be fixed.

\smallskip
\noindent $\bullet$ Denote $\mathbb{T} = [0, T] \subset \mathbb{R}_+$, ${\rm Leb}_d$ the Lebesgue measure on $(\R^d, {\cal B}or(\R^d))$, $d\geq1$.

\noindent $\bullet$ $\mathbb{X} := {\cal C}([0,T], \R^d)  (\text{resp.} \quad {\mathcal C_0}([0,T], \R^d))$ denotes the set of continuous functions (resp. null at 0)  from $[0,T]$ to $\R^d $ and ${\cal B}or(\mathbb{X})$ denotes the  Borel $\sigma$-field of $\mathbb{X}$ induced by the $\sup$-norm topology. 

\smallskip
\noindent $\bullet$ Let $E$ represents a polish space and  $\mathcal{N}(0,1)(dx)$ denote the density of the standard gaussian law.

\smallskip 
\noindent $\bullet$ For $p\in(0,+\infty)$, $L_{\mathbb H}^p(\P)$ or simply $L^p(\P)$ denote the set of  $\mathbb H$-valued random vectors $X$  defined on a probability space $(\Omega, {\cal A}, \P)$ such that $\|X\|_p:=(\E[\|X\|_{\mathbb H}^p])^{1/p}<+\infty$. 

\smallskip 
\noindent $\bullet$ $X\perp \! \! \!\perp Y$  stands for independence of random variables, vectors or processes $X$ and $Y$.   ${\bold{1}_d}$ denotes the vector in $\R^d$ with all components equal to $1$.

\smallskip 
\noindent $\bullet$  We will use in this paper the matrix norm \(|A| = \operatorname{tr}(A^{\top}A)\); for all \(A\in \R^{d\times d}\) and the vector norm \(|u|^2 = \|u\|^2 = u^\top u,\;\forall\, u\in \R^{d}\) whenever the context is clear.
Here $\| \cdot \|$ (or $| \cdot |$) denote the Euclidian norm on $\R^d$. 

\smallskip 
\noindent $\bullet$ Let \([0,T]\) be a finite time horizon (\(T<\infty\)). Throughout the paper, a weak solution is understood to be defined on a filtered probability space $(\Omega,\mathcal F,\mathbb P,\mathbb F=(\mathcal F_t)_{t\ge0})$~\footnote{We equip $(\Omega,\cF,\P)$ with a filtration $\mathbb F$ satisfying the usual conditions i.e. it is right-continuous and $\P-$complete.}. The filtered probability space is assumed to support the following independent random elements:
\begin{itemize}
	\item a \( 2d\)-dimensional $\mathbb{F}$-Brownian motion $ \big(B, B^\top\big) = \big( (B_t)_{t \geq 0}, (B^\top_t)_{t \geq 0}\big)$ for \(d\geq1\) ;
	\item an independent $\mathbb{F}$-Poisson random measure $N$ on $(E, \mathcal{B}or(E))$ with compensator 
	$\pi(X_t, de)\,dt$, where the Levy measure $\pi$ is positive and satisfies
	$\pi(X_t, de) = \nu_0(de) + X_t^\top \, \nu(de) = \nu_0(de) + \sum_{i=1}^{d} X_t^i \, \nu_i(de) $ given an affine Volterra process $X$ 
	and a finite positive measure $\nu_k$ satisfying 
	\begin{align}
		\nu_k(\{0\}) = 0, \mbox{ for }k = 0,...,d .  
	\end{align}
\end{itemize}
\noindent
We denote by $\tilde{N}(dt,de) := N(dt,de) - \pi(X_t, de)\,dt$ its compensated measure and $\mathcal{P}$ the $\sigma$-algebra of predictable processes.
\medskip
\noindent Our problem will be defined under a given complete filtered probability space \((\Omega,\mathcal{F},\mathbb{F} = \{\mathcal{F}_t\}_{0\leq t\leq T}, \P)\). The filtration \(\mathbb{F}\) is not necessarily the augmented filtration generated by \( (B, B^\top,N)\) ;
thus, it can be a strictly larger filtration. Here \(\P\) is a real-world probability measure from which a family of equivalent probability measures can be generated. 
We define 
\begin{align*}
	L^{\infty}_{\F}([0,T], \R^d) 
	&= \left\{ Y:\Omega \times [0,T]\to \R^d,\; \F\text{-prog.~measurable and bounded a.s.} \right\} \\
	L^p_{\F}([0,T], \R^d) 
	&= \left\{ Y:\Omega \times [0,T]\to \R^d,\; \F\text{-prog.~measurable s.t.~} \E\Big[ \int_0^T |Y_s|^p ds \Big] < \infty \right\} \\
	\mathbb{S}^{\infty}_{\F}([0,T], \R^d) 
	&= \left\{ Y:\Omega \times [0,T]\to \R^d,\; \F\text{-prog.~measurable s.t.~} \sup_{t\leq T} |Y_t(\omega)|< \infty \text{ a.s.} \right\}\\
	\mathbb{S}^{p}_{\F}([0,T], \R^d) 
	&= \left\{ Y:\Omega \times [0,T]\to \R^d,\; \F\text{-prog.~measurable s.t.~} \E\Big[\sup_{0\leq t\leq T} |Y_t|^p\Big] < \infty \right\}\\
	L^p_{\F}([0, T],\tilde{N}) 
	&= \left\{ 
	\begin{aligned}
		U: \Omega \times [0,T]\times E \to \R,\;
		\mathcal{P} \otimes \mathcal{B}or(E)\text{-measurable such that \qquad} \\
		\mathbb{E} \left[\int_0^T \int_E |U_t(e)|^p \, \pi(X_t, de) \, dt \right] < \infty\qquad \qquad\qquad
	\end{aligned}
	\right\}.
\end{align*}
In the following, unless otherwise specified, $V$ denotes the Volterra volatility defined in \eqref{VolSqrt_}.
Here $|\cdot|$ denotes the Euclidian norm on $\R^d$, \(d\geq1\) for short.  Classically, for $p \in (1, \infty  )$, we define $L^{p, loc}_{\F}([0,T], \R^d)$ as the set of progressive 
processes $Y$ for which there exists a sequence of increasing stopping times 
$\tau_n \uparrow \infty$ such that the stopped processes $Y^{\tau_n}$ are in $L^{p}_{\F}([0,T], \R^d)$ for every $n \geq 1$, and we recall that it consists of all progressive processes $Y$ s.t. 
$ \int_0^T |Y_t|^p dt$ $<$ $\infty$, a.s. Likewise for $\mathbb{S}^{p, loc}_{\F}([0,T], \R^d)$. To unclutter notation, we write $L^{p, loc}_{\F}([0,T])$  instead of $L^{p, loc}_{\F}([0,T], \R^d)$ when 
the context is clear. 

\smallskip 
\noindent $\bullet$ Let $M$ be a c\`adl\`ag local martingale vanishing at $0$. The stochastic
exponential $\mathcal{E}(M)$ is defined by
\[
\mathcal{E}(M)_t
= \exp\!\left(M_t - \tfrac{1}{2}\langle M^c \rangle_t\right)
\prod_{0 < s \leq t}\bigl(1 + \Delta M_s\bigr)\exp(-\Delta M_s),
\quad t \in [0,T],
\]
where $M^c$ denotes the continuous martingale part of $M$,
$\langle M^c \rangle$ its quadratic variation, and $\Delta M_s := M_s - M_{s-}$
the jump of $M$ at time $s$. When $\Delta M_t > -1$ for all $t \in [0,T]$
$\mathbb{P}$-a.s., $\mathcal{E}(M)$ is a positive local martingale thus a supermartingale.

 \smallskip 
\noindent $\bullet$ We define the compensated exponential function and its scaled version by
\begin{equation}\label{eq:ExpCom_functs}
\text{for all } x\in\mathbb{R},\qquad
h(x)=e^x-x-1,
\qquad
h_\lambda(x)=\frac{h(\lambda x)}{\lambda}.
\end{equation}

\medskip
\noindent $\bullet$ For \(T>0\), write \( ( \mathbb{D}([0,T], \mathbb{R}), \text{J}_1) \) as  the usual Skorokhod space \footnote{It refers to the set of stochastic processes defined on $[0,T]$ that are right-continuous with left limits (c\`adl\`ag). This space generalizes the classical space $\mathcal C([0,T], \mathbb{R})$, which is equipped with the uniform convergence norm.} i.e. the space of  all $\R$-valued c\`adl\`ag functions or processes on \( [0,T] \) equipped with the Skorokhod topology \( J_1 \) (see, e.g., Billingsley \cite{Billingsley1999}).

\smallskip 
\noindent $\bullet$ Let \(\mathcal{M}\) denote the space of all $(\R_+, {\cal B}or(\R))$-measurable functions \(m\) on \(\mathbb{R}_+\) such that the restriction \(m|_{[0, T]}\), for any \(T > 0\), is a \(\mathbb{R}\)-valued finite measure (i.e. the restriction $m|_{[0,T]}$ with $T > 0$ is well-defined). For \(m \in \mathcal{M}\) and a compact set \(E \subset \mathbb{R}_+\), we define the total variation of \(m\) on \(E\) by:

\begin{equation}
	|m|(E) := \sup \left\{ \sum_{j=1}^N |m(E_j)| : \{E_j\}_{j=1}^N \text{ is a finite measurable partition of } E \right\}.
\end{equation}
\noindent We assume that the set of measure $m \in \mathcal{M}$ on \(\R_+\) is of locally bounded variation.

\smallskip 
\noindent $\bullet$ Convolution between a function and a measure. Let \(f : (0, T] \to \mathbb{R}\) be a measurable function and \(m \in \mathcal{M}\). Their convolution (whenever the integral is well-defined) is defined by
\begin{equation}\label{eq:convolmeasure}
	(f * m)(t)= \int_{[0,t)} f(t - s) \, dm(s) = \int_{[0,t)} f(t - s) \, m(ds) = (f\stackrel{m}{*}\mathbf{1})_t, \quad t \in (0, T].
\end{equation}

\smallskip 
\noindent $\bullet$ For a measurable function \( \varphi: \mathbb{R}^+ \to \mathbb{R} \), \(\forall p \geq 1,\) we denote: 

\centerline{$
	\| \varphi \|^p_{L^p([0,T])} := \int_0^{T} |\varphi(u)|^p \, du,  \; \displaystyle \|\varphi\|_{\infty}=\|\varphi\|_{\sup} := \sup_{u\in \mathbb{R}^+}|\varphi(u)| \; \text{and} \; \displaystyle \|\varphi\|_{\infty,T}=\|\varphi\|_{\sup,T} := \sup_{u\in [0,T]}|\varphi(u)|.
	$}

\subsection{Preliminaries and characterization of affine Volterra processes with jumps}\label{sect-prelim} 
Fix $d\in\mathbb{N}$. Let $\mathbb{R}^d$ be the state space that
we consider, let $g_0\in L^1_{\mathrm{loc}}(\mathbb{R}_+;\mathbb{R}^d)$ and
$K\in L^2_{\mathrm{loc}}(\mathbb{R}_+;\mathbb{R}^{d\times d})$ be a matrix-valued
kernel.  Let
$\sigma=\diag(\sigma_1,\ldots,\sigma_d)$ with \(\sigma^i\) a (locally) positive bounded Borel function and $D$ a $\R^{d\times d}$-valued matrix.
We denote by $\mu : \mathbb{R}_{+} \to \mathbb{R}^d_{+}$ a positive function.
Let $E$ be a measurable space of marks, and let $\eta: E\to\mathbb{R}^d$
be a measurable function specifying the size of the jump associated with a mark $e\in E$. 
We also introduce a \textit{characteristic triplet}
$(b,a,\pi)$ consisting of the measurable maps
$b:\mathbb{R}^d\to\mathbb{R}^d$,
$a:\mathbb{R}^d\to\mathbb{R}^{d\times d}$ and the transition kernel
$\pi(x,de)$ from $\mathbb{R}^d$ to $\mathbb{R}^d$. We require this triplet to
be affine on $\mathbb{R}^d$, meaning that, 
\begin{equation}\label{eq:affineProcess}
\forall\,t\geq0,\;\forall x\in \R^d,\quad b(t,x) = \mu(t) + D x, \quad
a(t,x) =  \sigma(t) \diag(x) \sigma^\top(t), \quad
\pi(x,de) = \nu_0(de)
+ x^\top \nu(de) 
\end{equation}
Here we denote by \(\nu:=(\nu_1, \ldots,\nu_d)^\top\) and
$(\nu_k)_{k=0,\ldots,d}$ are positive finite measures on $E$ satisfying $\int_{E} \eta(e)\eta(e)^\top\,\nu_k(de) < \infty$ and
$\nu_k(\{0\})=0$ so that $\pi$ is a finite Borel measure on the complete separable metric space $E$ .
Throughout the paper, we denote by
$X=(X_t)_{t\ge0}$ a predictable process with trajectories in
$L^1_{\mathrm{loc}}(\mathbb{R}_+;\mathbb{R}^d)$ and such that
$X\in \mathbb{R}^d$, $\mathbb{P}\otimes dt$-a.e. It is defined on a filtered
probability space $(\Omega,\mathcal{F},\mathbb{F},\mathbb{P})$ where the
filtration $\mathbb{F}=(\mathcal{F}_t)_{t\ge0}$ satisfies the
usual conditions and $\mathcal{F}_0$ is the trivial
$\sigma$-algebra on $\Omega$. Moreover, we assume that $X$ solves the
following affine stochastic Volterra equation of convolution type
\begin{equation}\label{eq:Volterra1}
X_t
=
g_0(t)
+
\int_0^t K(t-s)\, dZ_s,
\qquad
\mathbb{P}\text{-a.s., for a.e. } t\geq0.
\end{equation}
where $Z$ is a $d$-dimensional semimartingale starting at $0$ whose
differential characteristics with respect to the Lebesgue measure are
$(b(t,X_t),a(t,X_t),\pi(X_t,de))$, $t\ge0$. These characteristics are
taken with respect to the ``\textit{truncation function}
$\chi\big(\eta(e)\big)=\eta(e)$, $e\in E$'', which can be chosen because $Z$ is a
special semimartingale due to~\cite[Proposition 2.29, Chapter II]{jacod2013limit},
and the local integrability of the trajectories of $X$. 
Similar arguments as in~\cite[section 2.]{BondiLivieriPulido2024} proves that the integral on the right hand side of~\eqref{eq:Volterra1} is well-defined $\P$-a.e. for $t\in\R_+$ so that~\eqref{eq:Volterra1}  admit the following canonical representation
\begin{align}\label{eq:volterra2}
	X_t = g_0(t) + \int_0^t K(t-s) \big(\mu(s)+D X_s ds + \sigma(s) \sqrt{\diag(X_s)}dW_s + \int_{E}\eta(e)\tilde{N}(ds,de)\big).
\end{align}
Here $\eta$ specifies how a mark $e \in E$ 
generated by the random measure $N$ is translated into a jump $\eta(e)$ in 
each component of the vector~\eqref{eq:volterra2}. It is worth mentioning that the common 
random measure \(N\) models and reflects the systematic jump in the sense that, all components of \(X\) are exposed to the same shocks, but respond to these shocks heterogeneously, each according to its own jump-size profile \(\eta_i\).

\medskip\noindent
We assume the existence and non-negativity of a c\`adl\`ag $\R^{d}_+$-valued weak solution \(X\) for the Volterra SDEs~\eqref{eq:Volterra1}--~\eqref{eq:volterra2} for any initial condition $g_0: [0,T] \to \R^{d}_+$ defined on some filtered probability space $(\Omega,\mathcal F, (\mathcal F)_{t\geq 0}, \mathbb P)$ and satisfying among others for a constant \(C > 0\)
	\begin{equation}\label{eq:SupMoment_X}
		\sup_{0\leq t\leq T} \E\left[ \|X_t\|^p \right] \leq C \big(1+ \E\big[ \sup_{0\leq t\leq T} \|g_0(t)\|^p \big]\big) < \infty, \quad p > 0.
	\end{equation}
For instance, weak existence of $X$ such that~\eqref{eq:SupMoment_X} holds is established under
suitable assumptions on the kernel $K$, the jump components $(\eta,\pi)$ and the specifications of $g_0$ as stated in the
following assumption and the remarks therein.
\noindent We always work under the assumption below, which applies to the inhomogeneous Volterra equation~\eqref{eq:Volterra1}--~\eqref{eq:volterra2}.
\begin{assumption}[On Volterra convolutive kernels and jump]\label{assump:kernelJumpVolterra}\textcolor{white}{.} 
	\begin{enumerate}
		\item[(i)] 
		Assume that \(K\) is diagonal with  strictly positive scalar kernels $K_i$ on the diagonal for \(i=1,\ldots,d\) that is completely monotone on $(0, \infty)$ and satisfies for any $T > 0$ 
        \begin{itemize}
			\item The integrability assumption: The following is satisfied for some $\widehat\theta_i\in (0,1]$.
			\vspace{-.2cm}
			\begin{equation}\label{eq:contKtilde}
				(\widehat {\cal K}^{cont}_{\widehat \theta})\;\;\exists\,\widehat{\kappa_i}< +\infty,\;\forall\bar{\delta}\!\in (0,T],\; \widehat \eta(\delta) :=  \sup_{t\in [0,T]} \Big[\int_{(t-\bar{\delta})^+}^t \hskip-0,25cm K_i\big(t-u\big)^2 du\Big]^{\frac12}\le \widehat \kappa_i \,\bar{\delta}^{\,\widehat \theta_i}.
				\vspace{-.2cm}
			\end{equation}
			\item  The continuity assumption: \(({\cal K}^{cont}_{\theta}) \;\;  
			\exists\, \kappa_i< +\infty,\; \exists \; \theta_i\in (0,1] \; \text{such that}\; \forall \,\bar{\delta}{\in (0, T)}\)
			\vspace{-.2cm}
			\begin{equation}\label{eq:Kcont}
				({\cal K}^{cont}_{\theta}) \; \forall \,\bar{\delta}{\,\in (0, T)},\; \eta(\bar{\delta}):= \sup_{t\in [0,T]} \left[\int_0^t |K_i(\big(s+\delta)\wedge T\big)-K_i(s)|^2ds \right]^{\frac 12} \le  \kappa_i\,\bar{\delta}^{\theta_i}.
				\vspace{-.2cm}
			\end{equation}
			
		\end{itemize}  
		\noindent
		\item[(ii)]  Assume that, for all $i = 1, \ldots, d$ and all $k = 0, \ldots, d$, the jump sizes satisfy
		\[
		\nu_k\big(\{ e \in E : \eta^i(e) < 0 \}\big) = 0,
		\]
		and are square-integrable, i.e.,
		\begin{equation}\label{eq:IntJumpSize}
		\int_E \big(\eta^i(e)\big)^2 \, \nu_k(de) < \infty.
		\end{equation}
		\noindent
    \end{enumerate}
\end{assumption}
\noindent Note that the drift $b(t,x) = \mu(t)+ Dx$ in~\eqref{eq:volterra2} is clearly Lipschitz continuous in $x\in\R^d$,  uniformly in time $t\!\in [0,T]$. Moreover, the drift term \(b\), the diffusion coefficient \(\sigma(t, x):=\sigma(t)\sqrt{\diag(x)}\) together with the jump components $(\eta,\pi)$ under conditions~\ref{assump:kernelJumpVolterra}~$(ii)$ are of linear growth, i.e. there is a constant \(C_{b,\sigma, \eta} > 0\) such that
\begin{equation}\label{eq:LinearGrowth}
	\|b(t, x)\| + |\sigma(t, x)| + \int_E \|\eta(e)\|^2\,\pi(x,de) \leq C_{b,\sigma, \eta}(1 + \|x\|),
	\quad \text{for all } t \in [0, T] \text{ and } x \in \mathbb{R}^d.
\end{equation}
\begin{Remark}\label{rm:Kernels}
	\noindent 1.  Assumption~\ref{assump:kernelJumpVolterra}~$(i)$ covers, for instance,  constant non-negative kernels,  fractional kernels  of the form $\frac{t^{\alpha-1}}{\Gamma(\alpha)}\mathbf{1}_{\mathbb{R}_+}$ with $\alpha  \in(\frac12,1]$, exponentially decaying kernels ${\rm e}^{-\beta t}$ with $\beta>0$ and more generally the exponentially decaying fractional kernels  \(K(t) = \frac{t^{\alpha-1}}{\Gamma(\alpha)} e^{-\beta t}\mathbf{1}_{\mathbb{R}_+}
	\) with \( \alpha \in \left( \tfrac{1}{2}, 1 \right] \) and \( \beta \ge 0 \) ( see e.g. 
    ~\cite{EGnabeyeu2025, GnabeyeuPages2026b}, \cite[ Example 2.2 ]{GnabeyeuPages2026}). These kernels satisfy $(\widehat {\cal K}^{cont}_{\widehat \theta})$ and $({\cal K}^{cont}_{\theta})$, for $\alpha >1/2$ with $\theta = \widehat \theta = \min\bigl( \alpha-\frac12,\; 1\bigr)$.

\noindent 2. The existence of a solution $X$ to~\eqref{eq:volterra2} satisfying~\eqref{eq:SupMoment_X} is guaranteed under suitable specifications of $g_0$, as detailed below for the case $d=1$:
    \begin{itemize}[label=\(\bullet\)]
 \item
 If $g_0(t)=V_0 $, for some $V_0 \in \R_+$, then under Assumption \ref{assump:kernelJumpVolterra}, and the linear growth~\eqref{eq:LinearGrowth}, when $\sigma(t)\equiv\sigma$ on $[0,T]$, ~\cite[Theorem 2.7. or Theorem 5.2.]{AlfonsiSzulda2025} 
 establishes the strong existence and pathwise uniqueness of a non-negative c\`adl\`ag solution of the stochastic Volterra equation (\ref{eq:volterra2}) such that \eqref{eq:SupMoment_X} holds.
 \item \noindent If $g_0(t)=V_0 \varphi(t)$, for some $\varphi: [0,T]\to\R_+$ and $\R_+$-valued random variable $V_0$, then
 in the case $\eta\equiv0$ and fractional kernel $K=K_{\alpha}$ for $\alpha\in[\frac12,1)$, Equation~\eqref{eq:volterra2} admits a unique in-law positive continuous weak solution, obtained as the scaling limit of suitably time-modulated Hawkes processes with heavy-tailed kernels in the nearly unstable regime; see~\cite{EGnabeyeuR2025} and its multidimensional extension $d\geq 1$ is straightforward at least when \(D\) is diagonal. 
Moreover, suppose that \( V_0^i \in L^p(\mathbb{P}) \) for some suitable \( p \in (0, +\infty) \), such that
		the process $t \to v_0^i(t) :=V_0^i \varphi^i(t)$ is strictly positive and  absolutely continuous and $(\mathcal F_t)$-adapted.
		Moreover, for some $\delta_i > 0$, for any $p > 0$, there exists \(C:=C_{T,p}\) such that
		\begin{equation*}
			\mathbb{E} \,\!\Big(\sup_{t \in [0,T]} |v_0^i(t)|^p\Big) < +\infty,\quad
			\mathbb{E}\!\big[\,|v_0^i(t') - v_0^i(t)|^p\,\big] 
			\le C \Big( 1 + \mathbb{E}\,\big[\sup_{t \in [0,T]} |v_0^i(t)|^p\big] \Big) |t' - t|^{\delta_i p}.
		\end{equation*}
 then, a solution \( t \mapsto V_t^i \) to  Equation~\eqref{VolSqrt_}  starting from  $V_0^i$ 
has a \( \big( \delta_i \wedge \theta_i \wedge \widehat \theta_i - \epsilon \big) \)-H\"older pathwise continuous modification  on $\R_+$ for sufficiently small \( \epsilon > 0 \).
   \end{itemize}
\end{Remark}

\medskip
\noindent From now on, we
continue to consider $g_0$ as the initial input curve, with the contribution of the drift term in~\eqref{eq:volterra2} incorporated into its definition. Likewise, we redefine the
 $d-$dimensional semimartingale $Z$, starting from $0$, by \(Z_t := \int_{0}^{t}\big(D X_s ds + \sigma(s) \sqrt{\diag(X_s)}dW_s + \int_{E}\eta(e)\tilde{N}(ds,de) \big) ,\) for all \( t \geq 0\) so that Equation~\eqref{eq:Volterra1} reads
\begin{equation} 
	\label{VolSqrt2} 
	\begin{aligned}
		X_t = 
        g_0(t) + \int_0^t K(t-s) dZ_s\quad\text{with}\quad g_0(t) := g_0^{old}(t) + \int_0^t K(t-s)\mu(s) ds.
	\end{aligned}
\end{equation}
Finally, we consider the $\R^d$-valued process for \( s\geq t,\) 
\begin{equation} 
	\label{eq:processg}
    \begin{aligned}
	g_t(s) &= g_0(s) + \int_0^t K(s-u) \big(D X_u du + \sigma(u) \sqrt{\diag(X_u)}dW_u + \int_{E}\eta(e)\tilde{N}(du,de)\big)\\
    &=  g_0(s) + \int_0^t K(s-u) dZ_u.
\end{aligned}
\end{equation}
One notes that for each, $s\leq T$, $(g_t(s))_{t\leq s}$ is the adjusted forward process 
    		\begin{equation}\label{eq:Condprocessg}
    		g_t(s) = \; \mathbb E^\P\Big[  X_s - \int_t^s K(s-u)DX_udu \Mid \cF_t\Big],\quad \P \text{- a.s., for a.e.}\; s > t.
    		\end{equation}
This adjusted forward process is commonly used (see, e.g.,~\cite{BondiLivieriPulido2024,Gnabeyeu2026b, dro2026optimal}) to elucidate the affine structure of affine Volterra processes with c\`adl\`ag trajectories.
\noindent The process in~\eqref{VolSqrt2} is non-Markovian and non-semimartingale in general.

\bigskip\noindent 
We associate the affine process $X$ with admissible parameter set
$(a,b,\pi)$ in~\eqref{eq:affineProcess} to a BSDEJ.
To allow for a more flexible
financial modeling, our real-valued BSDEJ will have the following form
\begin{equation}\label{eq:BSDEJ_general}
Y_t = F(\int_0^T X_s ds)
   -\int_t^T Z_s^{\top}\,dW_s
   -\int_t^T\int_{E}U_s(e)\tilde{N}(ds,de)
   +\int_t^T
      f(s,X_s,Y_s,Z_s,U_s)\,ds,
\end{equation}
for $t\in[0,T]$, where the terminal condition $Y_T:=F(\int_0^T X_s ds)$ is allowed to depend
on the affine process $X$, thereby permitting greater flexibility in financial modeling, in particular for the pricing of variance swaps (see, e.g., \cite[Propositions 3.2 and 3.4]{dro2026optimal}). Recall that $W$ is the
Brownian motion of the underlying affine process $X$ and the generator
is a deterministic Borel measurable function
\[
f:[0,T]\times \mathbb{R}^d_+\times\mathbb{R}\times \mathbb{R}^d\times\mathbb{R}
\longrightarrow\mathbb{R}.
\]
It is worth noting that, unlike in~\cite{richter2014explicit}, the BSDEJ~\eqref{eq:BSDEJ_general} is associated with an affine Volterra process with jumps (see, e.g.,~\cite{BondiLivieriPulido2024,dro2026optimal}).
\begin{Definition}\label{def:Solut_bsde}
    A solution to BSDEJ \eqref{eq:BSDEJ_general} is a family of adapted processes
$(Y,Z,U)$ with values in
$\mathbb{R}\times\R^d \times\mathbb{R}$ such that the equation \eqref{eq:BSDEJ_general} is a.s. satisfied, the processes $Z$ and $U$ are predictable processes
such that  $\left(Z, U \right)\in L^2_{\F}([0,T], \R^d) \times L^2_{\F}([0, T],\tilde{N})$, the integrability condition $\int_0^T|f(s,X_s,Y_s,Z_s,U_s)|\,ds<\infty$ holds true almost surely and the mapping $t\mapsto Y_t$ is c\`adl\`ag.
\end{Definition}

\begin{Proposition}\label{prop:Transform} Consider that there exists a solution triplet $\left(Y, Z, U\right) \in \mathbb{S}^{p}_{\F}([0,T],\R) \times L^2_{\F}([0,T], \R^d) \times L^2_{\F}([0, T],\tilde{N})$ to the quadratic BSDEJ \eqref{eq:BSDEJ_general} for some \(p\geq1\).
Define the process $\Gamma:=\exp(Y)$, then, it admits the following dynamics:
\begin{equation}\label{eq:GammaDynamicsGeneral}
\left\{
\begin{aligned}
\frac{d\Gamma_t}{\Gamma_{t-}}
={}&
\Big(
-f\big(t,X_t,\log\Gamma_t,Z_t,U_t\big)
+\frac12|Z_t|^2
+\int_E h\big(U_t(e)\big)\pi(X_t,de)
\Big)dt
\\
&
\qquad + Z_t^\top dW_t
+\int_E
\big(e^{U_t(e)}-1\big)\tilde N(dt,de),
\\
\Gamma_T&=e^{Y_T},\quad Y_T:=F\big(\int_0^T X_s ds\big).
\end{aligned}
\right.
\end{equation}
Moreover, assume that the local martingale defined by
    \begin{align}\label{eq:localMart_M} 
		M_t &= \; \mathcal E\Big( \int_0^t Z_s^\top dW_s + \int_0^t\int_{E}\big(e^{U_s(e)}-1\big)\tilde{N}(ds,de) \Big). 
	\end{align} is a true $\P$- martingale, then \(\Gamma\) admits the following representaton
as a Laplace transform for every $ t\in [0,T]$
	\begin{equation}\label{eq:ito_gamma} 
		\Gamma_t = \E \Big[ \exp\Big(F\big(\int_0^T X_s ds\big)+\int_t^T \big(f\big(s,X_s,\log\Gamma_s,Z_s,U_s\big) ds
-\frac12|Z_s|^2
-\int_E h\big(U_s(e)\big)\pi(X_s,de)\big)\big) ds\Big) \mid \mathcal F_t\Big].
	\end{equation}
\end{Proposition}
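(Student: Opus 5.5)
The plan is to obtain \eqref{eq:GammaDynamicsGeneral} from It\^o's formula for jump semimartingales applied to $x\mapsto e^x$. We then write $\Gamma$ as a product of an explicit finite-variation exponential and the stochastic exponential $M$ in \eqref{eq:localMart_M}. The representation \eqref{eq:ito_gamma} will then follow by taking $\P$-conditional expectations, using only the assumption that $M$ is a true $\P$-martingale. No change of measure is involved.

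\textbf{Step 1 (dynamics).} From \eqref{eq:BSDEJ_general}, $Y$ is a real semimartingale with
\[
dY_t=-f(t,X_t,Y_t,Z_t,U_t)\,dt+Z_t^\top dW_t+\int_E U_t(e)\,\tilde N(dt,de),
\]
and its jumps are $\Delta Y_t=\int_E U_t(e)\,N(\{t\},de)$. It\^o's formula for $\Gamma=e^{Y}$ gives
\[
d\Gamma_t=\Gamma_{t-}\,dY_t+\tfrac12\Gamma_{t}|Z_t|^2dt+\int_E\Gamma_{t-}\,h\big(U_t(e)\big)\,N(dt,de).
\]
The last term is the sum over jumps $\sum(\Delta e^{Y}-e^{Y_-}\Delta Y)$, written with $h$ from \eqref{eq:ExpCom_functs}. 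We then decompose $N(dt,de)=\tilde N(dt,de)+\pi(X_t,de)\,dt$. The $\tilde N$-integrands $\Gamma_{t-}U_t$ and $\Gamma_{t-}h(U_t)$ combine, since $U+h(U)=e^{U}-1$. The compensator contributes the drift term $\Gamma_{t-}\int_E h(U_t(e))\,\pi(X_t,de)\,dt$.

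Since $\Gamma_t=\Gamma_{t-}$ for $dt$-a.e.\ $t$, we may write $\log\Gamma_t=Y_t$ inside $f$ and divide formally by $\Gamma_{t-}>0$. This yields \eqref{eq:GammaDynamicsGeneral}, with terminal value $\Gamma_T=e^{Y_T}$.

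A side point needs checking: $\int_0^T\!\int_E h(U_s)\,\pi(X_s,de)\,ds<\infty$ a.s. It follows from the fact that $\Gamma$ is a genuine semimartingale, or equivalently that the increasing process $\int h(U)\,dN$ has a compensator that is finite a.s. It can also be read off from $h\ge 0$ together with the integrability built into Definition~\ref{def:Solut_bsde}.

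\textbf{Step 2 (product form).} Set
\[
a_t:=-f(t,X_t,\log\Gamma_t,Z_t,U_t)+\tfrac12|Z_t|^2+\int_E h(U_t(e))\,\pi(X_t,de),
\qquad
L_t:=\int_0^t Z_s^\top dW_s+\int_0^t\!\!\int_E (e^{U_s(e)}-1)\,\tilde N(ds,de).
\]
Then $d\Gamma_t=\Gamma_{t-}\,d\big(\int_0^t a_s\,ds+L_t\big)$, so $\Gamma=\Gamma_0\,\mathcal E\big(\int_0^\cdot a_s\,ds+L\big)$. The drift part is continuous and of finite variation, so it has no bracket with $L$. Hence the stochastic exponential factorizes as
\[
\Gamma_t=\Gamma_0\,e^{\int_0^t a_s\,ds}\,M_t,\qquad M=\mathcal E(L).
\]
We also have $\Delta L=e^{U}-1>-1$, so $M>0$. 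Consequently, for $t\le T$,
\[
\Gamma_T\,e^{-\int_t^T a_s\,ds}=\Gamma_t\,\frac{M_T}{M_t}.
\]

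\textbf{Step 3 (conditioning).} Here $\Gamma_t$ is $\mathcal F_t$-measurable and $M$ is a true $\P$-martingale. Hence $\Gamma_t M_T/M_t$ is $\P$-integrable, with
\[
\E\big[\Gamma_tM_T/M_t\mid\mathcal F_t\big]=\Gamma_t\,\E[M_T\mid\mathcal F_t]/M_t=\Gamma_t.
\]
To make this rigorous, we condition first on $\{\Gamma_t\le n\}$ and then use monotone convergence, since both sides are positive. Substituting $\Gamma_T=\exp\big(F(\int_0^TX_sds)\big)$ and
\[
-\int_t^Ta_s\,ds=\int_t^T\Big(f-\tfrac12|Z_s|^2-\int_Eh(U_s)\,\pi(X_s,de)\Big)ds
\]
gives exactly \eqref{eq:ito_gamma}.

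The only genuinely delicate point is the bookkeeping of jump terms in Step 1. One must make sure that the compensator of $\int h(U)\,dN$ is taken with the state-dependent intensity $\pi(X_t,de)$ and is a.s.\ finite, so that the decomposition into drift plus $\tilde N$-martingale is legitimate. Steps 2 and 3 are then algebraic consequences of the martingale assumption on $M$.
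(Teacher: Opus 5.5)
Your three steps follow the paper's proof. You apply It\^o's formula to $e^{Y}$, identify $\Gamma_t\exp\big(-\int_0^t a_s\,ds\big)$ with $\Gamma_0\,\mathcal E(L)$, and condition using the true-martingale property of $M$. The paper gets the middle step by applying It\^o's formula to that product; you use uniqueness for the Dol\'eans-Dade equation plus Yor's formula, which is the same computation.

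The problem is your justification of the side condition $\int_0^T\!\int_E h(U_s(e))\,\pi(X_s,de)\,ds<\infty$ a.s. You are right that it is needed to split $\int\!\int \Gamma_{s-}h(U_s(e))\,N(ds,de)$ into a $\tilde N$-integral plus a $ds$-drift. But neither route you offer establishes it:
\begin{itemize}
\item That $\Gamma=e^{Y}$ is a semimartingale is automatic and does not yield it. Since $\Gamma_0+\int\Gamma_{-}\,dY+\frac12\int\Gamma|Z|^2ds$ is already special, the finiteness is equivalent to $\Gamma$ being a \emph{special} semimartingale, which is not automatic.
\item Definition~\ref{def:Solut_bsde} does not give it either. $U\in L^2_{\F}([0,T],\tilde N)$ controls $h(U)\le U^2$ only on $\{U\le 1\}$, whereas on $\{U>1\}$ one has $h(U)\asymp e^{U}$. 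Integrability of a general generator \eqref{eq:Driver_General} says nothing about $h$, since it involves $g,g_\nu,g_x,\dots$; it helps only when $f$ itself contains $\int_E h(U)\,\pi(X,de)$.
\end{itemize}

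The correct reading is that the condition is presupposed by the statement itself. The compensated integral $\int_0^\cdot\!\int_E(e^{U_s(e)}-1)\,\tilde N(ds,de)$ in \eqref{eq:GammaDynamicsGeneral} and \eqref{eq:localMart_M} is a well-defined local martingale only if $\int_0^T\!\int_E\big(|e^{U}-1|^2\wedge|e^{U}-1|\big)\,\pi(X_s,de)\,ds<\infty$ a.s. Given $U\in L^2_{\F}([0,T],\tilde N)$, this is equivalent to finiteness of $\int_0^T\!\int_E h(U)\,\pi(X_s,de)\,ds$. To see this, compare the two integrands on $\{U\le 1\}$, where both are $O(U^2)$, and on $\{U>1\}$, which has a.s.\ finite $\pi\,ds$-mass and where both are comparable to $e^{U}$. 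So state it as a standing hypothesis, as the paper does tacitly. In Section~\ref{sect-appl} it holds automatically, since $U\le 0$ there and $0\le h(u)\le u^2/2$ for $u\le 0$.

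A minor related point concerns Step~3. There $\Gamma_tM_T/M_t$ need not be integrable, because $Y\in\mathbb{S}^{p}_{\F}$ does not give $\E[e^{Y_t}]<\infty$. What actually proves the identity is your truncation and monotone-convergence argument, which uses only conditional expectations of nonnegative variables.
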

\noindent{\bf Proof:} An application of It\^o-L\'evy formula to the exponential transformation $\Gamma = \exp(Y)$ together with the BSDE dynamics for $Y$ in~\eqref{eq:BSDEJ_general} yields~\eqref{eq:GammaDynamicsGeneral}.
	Now, let's define the process:
	\begin{align}
		M_t &= \;  \Gamma_t \exp\Big(\int_0^t\big( f\big(s,X_s,\log\Gamma_s,Z_s,U_s\big)
-\frac12|Z_s|^2
-\int_E h\big(U_s(e)\big)\pi(X_s,de)\big) ds\Big), \quad t \leq T.
	\end{align}
	An application of It\^o's formula combined with the dynamics \eqref{eq:GammaDef2} shows that $dM_t = M_t  \Big(\Lambda_t^\top dW_t+ \int_{E}\big(e^{U_t(e)}-1\big)\tilde{N}(dt,de)\Big)$, and so $M$ is a local martingale of the form~\eqref{eq:localMart_M}.
	\noindent Therefore, by assumption, the stochastic exponential \(M\) is a true $\P$- martingale. Now, as $\Gamma_T=\exp\Big(F\big(\int_0^T X_s ds\big)\Big)$, writing $\E[M_T|\mathcal F_t]=M_t$, we obtain  the representaton~\eqref{eq:ito_gamma}
    and the proof is completed. \hfill $\Box$

\smallskip \noindent {\bf Remark:} Note that, Equation~\eqref{eq:GammaDynamicsGeneral} is still a nonlinear BSDE with jumps and if moreover \(c_y\equiv0\), then
this Equation can be reffered to as a Ricatti backward stochastic differential equation with jumps (Riccati BSDEJ). (See analogy in the litterature with e.g.~\cite[Theorem 3.1]{AbiJaberMillerPham2021} or ~\cite[Theorem 3.1]{ChiuWong2014}, upon setting \(\Tilde{Z}_t= \Gamma_t Z_t\) ).

\section{Explicit solutions to a class of quadratic BSDEs with Jumps}\label{sect:Main_solBDSEJ}
This section is devoted to the explicit solution methodology of a class of quadratic BSDEs with jumps.  The key observation is that, under a suitable analytic specification of the generator $f$ (see for example~\cite{richter2014explicit}), the coupled system~\eqref{eq:BSDEJ_general} can be transformed into a system of Riccati-Volterra ordinary differential equations. This stands in contrast to the classical existence theory (see, e.g., ~\cite{Becherer2006,Morlais2010,Kobylanski2000}), which is primarily based on Lipschitz continuity and quadratic growth assumptions on the generator $f$. 

\medskip
\noindent
For a certain class of generators and terminal conditions \(F\) we can give
the solution to the above BSDEJ~\eqref{eq:BSDEJ_general} in terms of Riccati-Volterra ODEs. Suppose the
terminal condition
$F:\R^d_+\rightarrow\mathbb{R}$ is affine, more precisely,
\begin{equation}\label{eq:TermCond}
\exists \, (\Theta,\Xi)\in \R^d\times\R, \quad \text{such that}\quad F(x)= \Theta x+\Xi,
\quad
x\in \R^d_+.
\end{equation}
 Let the set $ {\cal S}$ be defined as the space which contains all
functions $ u : E \to \R $.
The class of generators $f$ is more involved, more precisely the generator
$f:[0,T]\times \mathbb{R}^d_+\times\mathbb{R}\times \mathbb{R}^d\times\mathbb{R}
\longrightarrow\mathbb{R}$ is allowed to have the following form 
\begin{equation}\label{eq:Driver_General}
\begin{aligned}
    f(t,x,y,z,u) ={}&  z^\top \diag(c_{zz}(t)) z
+ z^\top \diag(c_{z\sqrt{x}}(t)) \sqrt{x}
+ c_x(t)^\top x
+ c_y(t)y
+ c(t) + x^\top \int_E g_\nu (t, u(e)) \nu(de)
\\
&\;+ \int_E \Big(z^\top \diag\big(g_{z\sqrt{x}}(t, u(e))\big)\sqrt{x}
+ g_x(t, u(e))^\top x + g_y(t, u(e))y + g(t, u(e))\Big) \nu_0(de).
\end{aligned}
\end{equation}
where
\[
c_{zz},\,c_{z\sqrt{x}}, \,
c_x:[0,T]\to\mathbb{R}^d,\qquad c,c_y:[0,T]\to\mathbb{R},\qquad g_\nu : [0, T ] \times {\cal S} \to \R, 
\]
are continuous functions, 
$c_{zz},\,c_{z\sqrt{x}}$ assumed symetrix-valued, $\sqrt{x}:=\bigl(\sqrt{x_1},\ldots,\sqrt{x_d}\bigr)^\top$
is understood componentwise (componentwise square root), and $ g_\nu $ is an $\nu(de )$-integrable function, which is continuous in time.
 Finally, still in the above equation~\eqref{eq:Driver_General},
\[
g_{z\sqrt{x}},\,
g_x:[0,T]\times {\cal S}\to\mathbb{R}^d,\qquad g,g_y:[0,T]\times {\cal S}\to\mathbb{R},
\]
are $\nu_0(de)$-integrable functions 
which are also assumed to be
continuous in time. With the form the the driver~\eqref{eq:Driver_General}, the quadratic terms are scalar products of coordinatewise products.

\smallskip
\noindent The reader is invited to refer to~\cite[Propositions 3.2 and 3.5]{Gnabeyeu2026b,dro2026optimal} for motivating examples demonstrating that the proposed structure emerges naturally in relevant applications.

\medskip
\noindent 
Now, we are bound to determine an expression for the vector \(Z\) and the scalar \(U\). We begin by investigating toy examples using a distortion transformation technique. In each case, we relate the solution \(\Gamma:=\exp(Y)\) of the non-linear Riccati BSDEJ~\eqref{eq:GammaDynamicsGeneral} with an conditional expectation or a representation as a Laplace transform via a proper transformation to be specified in the sequel. 

\subsection{Intuition from the degenerate multivariate structure without jumps}\label{subsect:Intuition_Brownian}
In this section, we consider a case with no jumps \(\eta \equiv0\) (pure Brownian case).
For our first toy example, we consider a quadratic time-dependent BSDE without jumps and with a terminal condition depending linearly on the affine process $X$. Note that this framework encompasses, among others, the fake stationary Volterra Heston model introduced in \cite{EGnabeyeuR2025,EGnabeyeuPR2025,Gnabeyeu2026b}. In such case, Equation~\eqref{eq:BSDEJ_general} with $c_y\equiv0$ reads, for all $t\leq T$
	\begin{equation}\label{eq:GammaDef_}
		\left\{
		\begin{array}{ccl}
			dY_t &=&  \;  -\big(Z_t^\top \diag\big(c_{zz}(t)\big) Z_t+Z_t^\top \diag\big(c_{z\sqrt{x}}(t)\big)\sqrt{X_t}+c_x(t)^\top X_t+c(t)\big)dt+Z_t^\top dW_t, \\
			Y_T &=&  F\big(\int_0^T X_s ds\big). 
		\end{array}  
		\right.
	\end{equation}
\begin{Proposition}
	\label{prop:innerDegenerate} Let the assumptions of Proposition~\ref{prop:Transform} be in force. Assume moreover that the map \(c_{zz}\)   in~\eqref{eq:Driver_General} is of the form $c_{zz}(t) = c_{zz}\,1_{d}=(c_{zz},\ldots,c_{zz})^\top$ for every $t\in[0,T]$ and  $c_{zz}\neq 0$. 
Let $\psi \in C([0,T],\R^d)$ be the solution (if any) to the inhomogeneous Riccati-Volterra equation \eqref{eq:RiccatiMarkowitzpsi1}-\eqref{eq:RiccatiMarkowitzpsi2} below.
	\begin{align} 
		\psi^i(t)&= \int_0^t K_i(t-s)\big(\Theta_i + c_x^i(T-s)  + F_i(T-s,\psi(s))\big) ds,  \label{eq:RiccatiMarkowitzpsi1} \\
		F_i(s,\psi) &=   c^i_{z\sqrt{x}}(s) \sigma^i(s) \psi_i + (D^\top \psi)_i + \frac {1} 2  \times 2 c_{zz} \cdot (\sigma^i(s)\psi^i)^2, \quad i=1,\ldots,d, \label{eq:RiccatiMarkowitzpsi2}
	\end{align} 
    Then, the pair $ (Y, Z)$ defined by
		\begin{equation}\label{eq:Solbsde}
			\left\{
			\begin{array}{ccl}
				Y_t &=&   F\big(\int_0^t X_s ds\big) + \int_t^T c(s) ds +  \sum_{i=1}^d\int_t^T  \big(\Theta_i + c_x^i(s)  + F_i(s,\psi(T-s))\big) g^i_t(s) ds, \\
				Z_t^i &=&  \sigma^i(t) \psi^i(T-t) \sqrt{X^i_t}, \quad  i=1,\ldots,d, \quad 0 \leq t \leq T, 
			\end{array}  
			\right.
		\end{equation}
        where the $\R^d$-valued process \((g_t(s))_{t\leq s}\),  denotes the adjusted forward process given by~\eqref{eq:processg}, is a 
        $\mathbb{R}\times\R^d$-valued solution to the riccati BSDE~\eqref{eq:GammaDef_}.
\end{Proposition}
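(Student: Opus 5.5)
Since $Y$ is given explicitly, the plan is a direct verification: compute $dY_t$ and check that the drift equals minus the driver evaluated at $(X_t,Z_t)$ with $Z$ as in \eqref{eq:Solbsde}, and that $Y_T$ is the terminal condition. Write $\phi_i(s):=\Theta_i+c_x^i(s)+F_i(s,\psi(T-s))$, a deterministic continuous function. The term $F(\int_0^t X_s ds)=\Theta\int_0^tX_sds+\Xi$ contributes $\Theta^\top X_t\,dt$. At $t=T$ the two integrals over $[T,T]$ vanish, so $Y_T=F(\int_0^TX_sds)$ holds trivially.

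The main step is the differential of $t\mapsto\int_t^T\phi_i(s)g^i_t(s)\,ds$. From \eqref{eq:processg} with $\eta\equiv0$ and diagonal $K$, $dg^i_t(s)=K_i(s-t)\,dZ^i_t$ for $s>t$, where $dZ^i_t=(DX_t)_i\,dt+\sigma^i(t)\sqrt{X^i_t}\,dW^i_t$, and $g^i_t(t)=X^i_t$. A stochastic Fubini argument, justified by $K_i\in L^2_{loc}$, continuity of $\phi_i$ and the moment bound \eqref{eq:SupMoment_X}, gives
\[
d\Big(\int_t^T\phi_i(s)g^i_t(s)ds\Big)=-\phi_i(t)X^i_t\,dt+\Big(\int_t^T\phi_i(s)K_i(s-t)ds\Big)dZ^i_t .
\]
Changing variables $s\mapsto T-s$ in \eqref{eq:RiccatiMarkowitzpsi1} yields $\int_t^T K_i(s-t)\phi_i(s)\,ds=\psi^i(T-t)$. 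This is the key identity, and it is why the Riccati–Volterra equation is posed in reversed time.

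Collecting terms, the martingale part is $\sum_i\psi^i(T-t)\sigma^i(t)\sqrt{X^i_t}\,dW^i_t=Z_t^\top dW_t$, consistent with \eqref{eq:Solbsde}. The $dt$ part is
\[
\Theta^\top X_t-c(t)-\sum_i\phi_i(t)X^i_t+\sum_i\psi^i(T-t)(DX_t)_i .
\]
Now expand $\phi_i$ and use $\sum_i\psi^i(DX)_i=\sum_i(D^\top\psi)_iX^i$. The $\Theta$ and $D^\top\psi$ terms cancel, leaving
\[
-c(t)-c_x(t)^\top X_t-\sum_i\big[c^i_{z\sqrt x}(t)\sigma^i\psi^i+c_{zz}(\sigma^i\psi^i)^2\big]X^i_t .
\]
This matches $-\big(Z^\top\diag(c_{zz})Z+Z^\top\diag(c_{z\sqrt x})\sqrt{X}+c_x^\top X+c\big)$, since $Z^\top\diag(c_{zz}1_d)Z=c_{zz}\sum_i(\sigma^i\psi^i)^2X^i_t$ and $Z^\top\diag(c_{z\sqrt x})\sqrt X=\sum_ic^i_{z\sqrt x}\sigma^i\psi^iX^i_t$. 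Here I read $(\sigma^i\psi^i)^2$ as multiplying $X^i$, the natural Riccati form. The constant structure of $c_{zz}$ is used only to factor the quadratic term componentwise.

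Finally, check admissibility in the sense of Definition~\ref{def:Solut_bsde}. Since $\psi$ and $\sigma$ are bounded on $[0,T]$, $\E\int_0^T|Z_t|^2dt\le C\int_0^T\E|X_t|\,dt<\infty$ by \eqref{eq:SupMoment_X}. Integrability of the driver follows similarly, and $Y$ is continuous as a sum of continuous terms.

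The main obstacle is the rigorous stochastic Fubini and differentiation step. The kernel $K_i$ may be singular at $0$, and $g_t(s)$ is not a semimartingale in $s$. I would handle it by writing $\int_t^T\phi_i(s)g^i_t(s)ds=\int_t^T\phi_i(s)g^i_0(s)ds+\int_0^t\big(\int_t^T\phi_i(s)K_i(s-u)ds\big)dZ^i_u$. I would then differentiate in $t$ directly, using the convolution identity for $\psi$ to identify the integrand. Continuity of $\psi$ makes the resulting integrand $\psi^i(T-u)$ well defined, and the boundary term $\phi_i(t)g^i_t(t)=\phi_i(t)X^i_t$ comes from the moving lower limit.
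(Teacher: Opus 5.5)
Your verification is correct, but it takes a different route from the paper's proof of this proposition.

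\textbf{The paper's route.} The paper argues constructively by a distortion transform. It applies It\^o's formula to $\Gamma^{\delta}=e^{\delta Y}$ with $\delta=2c_{zz}$, so that the quadratic terms $c_{zz}|Z_t|^2$ cancel in every component at once; this is exactly where a constant, nonzero, component-independent $c_{zz}$ is needed. It then removes the cross term $Z_t^\top\diag(c_{z\sqrt{x}}(t))\sqrt{X_t}$ by a Girsanov change of measure to $\tilde{\mathbb{P}}$, justified via Novikov. Using the true-martingale hypothesis of Proposition~\ref{prop:Transform}, it writes $\Gamma_t^{2c_{zz}}$ as an adapted factor times the conditional Laplace functional $\mathbb{E}^{\tilde{\mathbb{P}}}\bigl[\exp\bigl(2c_{zz}\int_t^T(\Theta+c_x(s))^\top X_s\,ds\bigr)\mid\mathcal{F}_t\bigr]$. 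It evaluates this with the exponential-affine formula \cite[Theorem A.1.]{dro2026optimal}, rescales $\tilde\psi=2c_{zz}\psi$, and reads off $Z$ from the dynamics of $\Gamma$.

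\textbf{What each route buys.} The paper's route is a derivation: it explains where the ansatz for $Y$ and the reversed-time Riccati--Volterra equation come from. It also gives a Feynman--Kac-type representation of $e^{Y}$, which is what delivers bounds on $\Gamma$. The price is several extra hypotheses and reliance on an external transform theorem.

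Your direct It\^o/stochastic-Fubini check rests on the identity $\int_t^T K_i(s-t)\phi_i(s)\,ds=\psi^i(T-t)$ and the decomposition $\int_t^T\phi_i(s)g^i_t(s)\,ds=\int_0^T\phi_i(s)g^i_0(s)\,ds+\int_0^t\psi^i(T-u)\,dZ^i_u-\int_0^t\phi_i(s)X^i_s\,ds$. It needs only a continuous solution $\psi$: no change of measure, no martingale assumption, and not even $c_{zz}\neq0$. It works verbatim for a time- and component-dependent $c_{zz}(t)$ once $F_i$ carries $c^i_{zz}(s)$. So your remark that the constant structure is needed to factor the quadratic term is overly cautious.

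This is precisely the verification argument the paper itself uses for Theorem~\ref{Thm_BDSEJ_Gen}. The present statement is its special case $c_y\equiv0$, $\eta\equiv0$, with all $g$-terms vanishing. Your route also avoids the bookkeeping of the distortion computation; for instance, the paper's intermediate formula \eqref{eq:Markowitz_gamma_all} omits the $\Theta_i$ present in \eqref{eq:Solbsde}. Its only cost is that it presupposes the formula for $Y$ rather than producing it.

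A minor point: $\phi_i$ need not be continuous, since $\sigma^i$ is only assumed bounded Borel. Boundedness is all your Fubini step uses.
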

\medskip
\noindent{\bf Proof:}
\noindent We rather proceed with \(\Gamma:=\exp\big(Y\big)\). For the solvability of the non-linear Riccati BSDE~\eqref{eq:GammaDef_}, inspired by the martingale distortion transformation, let \(\delta \in \R\), applying It\^o's lemma to \(\Gamma^\delta=\exp\big(\delta Y\big)\) yields \(d\Gamma_t^\delta=\Gamma_t^\delta \big(\delta Y_t + \frac{\delta^2}{2}\,d\langle Y\rangle_t\big)\).
Since the map $c_{zz}$ in ~\eqref{eq:GammaDef_} is constant of the form $(c_{zz},\dots, c_{zz})^\top$ for $c_{zz}\neq 0$, we obtain
\begin{align*}
d\Gamma_t^\delta
={}&
\delta \Gamma_t^\delta
\Big[
-\Big( \big(
c_{zz}-\frac{\delta}{2}
\big)|Z_t|^2
+
Z_t^\top \diag\big(c_{z\sqrt{x}}(t)\big)\sqrt{X_t}
+c_x(t)^\top X_t+c(t)
\Big)dt + Z_t^\top dW_t \Big],
\\
={}&
\delta \Gamma_t^\delta
\Big[
-\Big( \big(
c_{zz}-\frac{\delta}{2}
\big)|Z_t|^2
+
c_x(t)^\top X_t+c(t)
\Big)dt+
Z_t^\top
\Big(
dW_t-\diag\big(c_{z\sqrt{x}}(t)\big)\sqrt{X_t}\,dt
\Big)\Big].
\end{align*}
From now on, we take \(\delta=2c_{zz}\) and we introduce the new probability measure  $\tilde{\mathbb{P}}$,  together with the associated Brownian motion $\widetilde{W}_t$, by setting 
\begin{equation}\label{eq:RN_density}
\frac{d\tilde{\mathbb{P}}}{d\mathbb{P}}|_{\mathcal{F}_t}
=
\mathcal{E}\Big(
\int_0^t
\big(\diag(c_{z\sqrt{x}}(s))\sqrt{X_s}\big)^{\top}\,dW_s
\Big),\quad \widetilde{W}_t
=
W_t-\int_0^t
\diag\big(c_{z\sqrt{x}}(s)\big)\sqrt{X_s}\,ds.
\end{equation}
By the continuity of \(c_{z\sqrt{x}}\) together with the finiteness of the Laplace transform of $X$ (see e.g.~\cite[Theorem A.1.]{dro2026optimal}), Novikov's condition is satisfied. Therefore, the stochastic exponential above, which defines the Radon-Nikodym density  \(\mathcal{F}_t\), is a true martingale.
Hence, $\tilde{\mathbb{P}}$
 is well-defined, and by Girsanov's theorem, $\widetilde{W}$
 is a standard Brownian motion under $\tilde{\mathbb{P}}$.
Moreover, with this choice of \(\delta\), the quadratic terms \(Z_t^\top Z_t\) in the above SDE satisfy by \(\Gamma_t^\delta\) cancel out. As a result, the dynamics of the process \(\Gamma_t^{2c_{zz}}\) under $\tilde{\mathbb{P}}$ can be written as follows:
\begin{align*}
	d\Gamma_t^{2c_{zz}}
	&= 2c_{zz} \Gamma_t^{2c_{zz}}\Big[-\big(c(t) + c_x(t)^\top X_t \big)dt + Z_t^\top d\widetilde{W}_t\Big]
\end{align*}
Define the process \(M_t:=\Gamma_t^{2c_{zz}}\exp\Big(2c_{zz}\int_0^t\big( c(s) + c_x(s)^\top X_s \big) ds\Big) =\mathcal{E}\Big(2c_{zz}\int_{0}^{t} Z_s^\top d\widetilde{W}_s\Big), \; t \leq T\) by It\^o's Lemma. Under our assumption, \(M\) is a true $\tilde{\mathbb{P}}$- martingale. Now, as $\Gamma_T=\exp\big(\int_0^T \Theta^\top X_s ds + \Xi \big)$, writing $\E[M_T|\mathcal F_t]=M_t$, we obtain the representation below for the auxiliary process 
\begin{equation}\label{eq:Markowitz_gamma}
\Gamma_t:=\Big(\E^{\tilde{\mathbb{P}}}\Big[\exp\Big(2c_{zz}\Big(\int_0^T \Theta^\top X_s ds + \Xi+ \int_t^T  \big(c(s) + c_x(s)^\top X_s \big) ds\Big) \Big)|\mathcal{F}_t\Big]\Big)^{\frac{1}{2c_{zz}}}.
\end{equation}
Note that, this approach is somewhat parallel in spirit, to the idea underlying the Feynman-Kac representation theorem for linear PDEs under a probability measure.
We write for \( \; 0\leq t\leq T\):
\begin{align*}
	\Gamma_t^{2c_{zz}} &=\E^{\tilde{\mathbb{P}}} \Big[ \exp\Big(2c_{zz}\Big(\int_0^T \Theta^\top X_s ds + \Xi+\int_t^T \big(c(s) + c_x(s)^\top X_s \big) ds\Big)\Big) \mid \mathcal F_t\Big]\\
	&= \exp\Big(2c_{zz}\Big(\int_0^t \Theta^\top X_s ds + \Xi+\int_t^T c(s)ds\Big)\Big)\E^{\tilde{\mathbb{P}}} \Big[ \exp\Big(2c_{zz}\int_t^T \big(\Theta+c_x(s)\big)^\top X_s ds\Big) \mid \mathcal F_t\Big],
\end{align*}
which ensures that $\Gamma_t>0$ $\P-a.s.$, since $X_t$ is non-negative ($X\in \mathbb R^d_+$) and $c(t)$ is deterministic. An application of the exponential-affine transform formula in ~\cite[Theorem A.1.]{dro2026optimal} with \(\mathcal{M} \ni m(\dd s) := 2c_{zz}\big(\Theta+c_x(s)\big)\,ds \) yields:  
\begin{equation}
	\E^{\tilde{\mathbb{P}}} \Big[ \exp\Big(\int_t^T 2c_{zz} \sum_{i=1}^d \big(\Theta_i+c_x^i(s)\big) X^i_s ds\Big) \mid \mathcal F_t\Big] = \exp\Big( \sum_{i=1}^d\int_t^T  \big(2c_{zz} \big(\Theta_i+c_x^i(s)\big)  + \tilde{F}_i(s,\tilde{\psi}(T-s))\big) g^i_t(s) ds \Big)
\end{equation}
where $g=(g^1,\ldots,g^d)^\T$, given as in~\eqref{eq:processg}  denotes the adjusted forward process and 
\begin{equation}\label{eq:tildepsi1}\tilde{F}_i(s,\tilde{\psi}) = c^i_{z\sqrt{x}}(s) \sigma^i(s) \tilde{\psi} + (D^\top \tilde{\psi})_i + \frac {1} 2 (\sigma^i(s)\tilde{\psi}^i)^2, \quad i=1,\ldots,d,
\end{equation}  where 
$\tilde{\psi}$ assumed in $C([0,T],\R^d \setminus{\{0_d\}})$ solves the inhomogeneous Ricatti-Volterra equation 
\begin{equation}\label{eq:tildepsi2}
	\tilde{\psi}^i(t)= \int_0^t K_i(t-s)\big(2c_{zz}\big(\Theta_i+c_x^i(T-s)\big)  + \tilde{F}_i(T-s,\tilde{\psi}(s))\big) ds, \quad i=1,\ldots,d.
\end{equation} 
Setting \(\tilde{\psi}= 2c_{zz}\psi \) implies that \(\tilde{F}_i(s,\tilde{\psi}) = 2c_{zz} F_i(s,\psi) \quad i=1,\ldots,d\), where \(F_i\) is given by~\eqref{eq:RiccatiMarkowitzpsi2},
so that 
$\psi\in C([0,T],\R^d\setminus{\{0_d\}})$ solves the inhomogeneous Ricatti-Volterra equation~\eqref{eq:RiccatiMarkowitzpsi1}.
Therefore, it holds that for all \(t\in[0,T]\),
\begin{equation*}
	\E^{\tilde{\mathbb{P}}} \Big[ \exp\Big(\int_t^T 2c_{zz}\sum_{i=1}^d \big(\Theta_i+c_x^i(s)\big) X^i_s ds\Big) \mid \mathcal F_t\Big] = \exp\Big( 2c_{zz} \sum_{i=1}^d\int_t^T  \big(\Theta_i+ c_x^i(T-s)  + F_i(s,\psi(T-s))\big) \tilde{g}^i_t(s) ds \Big)
\end{equation*}
Consequently, ~\eqref{eq:Markowitz_gamma} can be computed in semi-closed form and becomes 
\begin{equation}\label{eq:Markowitz_gamma_all}
	\Gamma_t = \exp\Big( \int_0^t \Theta^\top X_s ds + \Xi +\int_t^T c(s) ds +  \sum_{i=1}^d\int_t^T  \big(c_x^i(s)  + F_i(s,\psi(T-s))\big) g^i_t(s) ds \Big),
\end{equation}
where  $\psi\in C([0,T],\R^d\setminus{\{0_d\}})$ solves the inhomogeneous Ricatti-Volterra equation ~\eqref{eq:RiccatiMarkowitzpsi1}-~\eqref{eq:RiccatiMarkowitzpsi2}.
This yields the first equation in~\eqref{eq:Solbsde} and by standard computation the result that the dynamics of $\Gamma$ is  given by 
\begin{align*}
	d\Gamma_t &=  \Gamma_t \Big[ \Big(\Theta^\top X_t -c(t)  -  \sum_{i=1}^d \big( c_x^i(t) + c_{zz}\cdot(\sigma^i(t)\psi^i(T-t))^2 \big) X^i_t \Big)dt +\sum_{i=1}^d \psi^i(T-t)\sigma^i(t) \sqrt{X^i_t}d\widetilde{W}^i_t \Big]. 
\end{align*}
so that by identification, we may take \(Z^i := \sigma^i(t) \psi^i(T-t) \sqrt{X^i_t}\; \) for every \(i=1,\ldots,d\) with \( 0 \leq t \leq T\) and the proof is completed. \hfill $\Box$.

\medskip
\noindent {\bf Remark:}
 As \(X\in\mathbb{R}_+^d\), one easily checks that if \(\Theta + c_x(t) \in\mathbb{R}_-^d \), then the pair $\left(\Gamma,Z\right)$  is a ${ \mathbb{S}^{\infty}_{\F}([0,T], \R_+^*)}\times L^{2, loc}_{\F}([0,T], \R^d) $-valued solution to the quadratic BSDE~\eqref{eq:GammaDynamicsGeneral} in which \(c_y\equiv g_{z\sqrt{x}}\equiv g_y\equiv g_x\equiv g\equiv g_\nu\equiv\pi\equiv0\), provided solvability in \(C([0,T],\R^d\setminus{\{0\}})\) of the inhomogeneous Ricatti-Volterra equation ~\eqref{eq:RiccatiMarkowitzpsi1}-~\eqref{eq:RiccatiMarkowitzpsi2} (see Theorem~\ref{Thm:Riccatilocalexistence} in Appendix~\ref{app:SolRiccati}). 
In fact, if \(\Theta + c_x(t) \in \mathbb{R}_-^d \) for every \(t\in[0,T]\), then by~\eqref{eq:Markowitz_gamma} \(0 < \Gamma_t
\leq
\exp\!\big(
\int_t^T c(s)\,ds + \Xi
\big),
\; \mathbb{P}\text{-a.s.},\)
since \(X\in\mathbb{R}_+^d\) by assumption. As for \(Z\), see the proof of Theorem~\ref{Thm_BDSEJ_Gen} further on.

\subsection{Intuition from the pure Volterra jumps case}\label{subsect:Intuition_jumps}
In this section we consider the case of a pure-jump affine Volterra processes i.e $\sigma \equiv 0$ thus $Z\equiv0$.
As a second toy example, we consider a time-dependent BSDE without a Brownian component, but with the same terminal condition. This framework includes, in particular, the Volterra extension of the so-called Barndorff--Nielsen--Shephard model introduced in \cite{BarndorffNielsenShephard2001}. Such models have found important applications in optimal portfolio selection; see, for instance, \cite{BenthKarlsenReikvam2003,richter2014explicit} and the references therein.
To give an intuition for the jump part solution $U$ we take
equation~\eqref{eq:BSDEJ_general} with $c_y\equiv g_y\equiv0$, $g_{z\sqrt{x}}\equiv g_x\equiv0$ and $g\equiv g_\nu\equiv h$ in equation~\eqref{eq:ExpCom_functs} for all $t\leq T$. This simplifies the BSDE with jumps into:
\begin{equation}\label{eq:bsde_mv_jumps}
\left\{
\begin{array}{ccl}
dY_t
&=&
-\Big(c(t) + c_x(t)^\top X_t 
 +\int_E h\big(U_t(e)\big)\pi(X_t,de) 
\Big)dt + \int_E
U_t(e)\tilde N(dt,de),
\\
Y_T&=&F\big(\int_0^T X_s ds\big).
\end{array}
\right.
\end{equation}
  We have the following proposition
	\begin{Proposition} 
		\label{prop:MV_purejump_riccati} Let the assumptions of Proposition~\ref{prop:Transform} be in force. Assume moreover that there exists a solution $\psi \in C([0,T],\R^d)$ to the nonlinear Volterra integral equation with L\'evy jump term (compensated exponential jump term) for $i = 1,\ldots,d$:
		\begin{align}
			\psi^i(t) &= \int_0^t K_i(t-s)
			\big(\Theta_i + c_x^i(T-s) + F_i(\psi(s))\big)\,ds,
			\quad
			\label{eq:Riccatipsi_pureJ1}\\[6pt]
			F_i(\psi) &= (D^\top\psi)_i
			+ \int_{E} h\!\big(\psi^\top\eta(e)\big)\nu_i(de).
			\label{eq:Riccati_pureJ2}
		\end{align}
		Then, the couple $\left(Y,U\right)$ 
        defined as 
		\begin{equation}\label{eq:Solbsde2} 
\left\{
\begin{aligned}
    Y_t
    &=
    F\big(\int_0^t X_s\,\mathrm{d}s\big)
    + \int_t^T c(s)\,\mathrm{d}s \\
    &\quad
    + \int_t^T
    \Big(
        \int_E h\big(\psi(T-s)^\top\eta(e)\big)\,\nu_0(\mathrm{d}e)
        + \sum_{i=1}^d
        \big(
            \Theta_i + c_x^i(s)
            + F_i\bigl(\psi(T-s)\bigr)
        \big)
        g_t^i(s)
    \Big)\,\mathrm{d}s, \\
    U_t(e)
    &=
    \psi(T-t)^\top\eta(e),
    \qquad 0\leq t\leq T.
    \end{aligned}
    \right.
\end{equation}
		where $g$ $=$ $(g^1,\ldots,g^d)^\T$ is the $\R^d$-valued process \((g_t(s))_{t\leq s}\) given by \eqref{eq:processg}, is a  
        $\mathbb{R}\times\mathbb{R}$-valued solution to the non-linear BSDEJ~\eqref{eq:bsde_mv_jumps}.
    \end{Proposition}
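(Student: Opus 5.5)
The plan mirrors the proof of Proposition~\ref{prop:innerDegenerate}: I work with $\Gamma=\exp(Y)$ and use Proposition~\ref{prop:Transform}. Here $Z\equiv 0$, $c_y\equiv 0$ and $g\equiv g_\nu\equiv h$. With these choices the drift of $\Gamma$ in \eqref{eq:GammaDynamicsGeneral} loses its $\int_E h(U_t(e))\pi(X_t,de)$ terms. Indeed, the driver contributes $-\int_E h(U_t)\,(\nu_0+X_t^\top\nu)(de)$ and It\^o's formula contributes $+\int_E h(U_t)\pi(X_t,de)$, so the two cancel. Therefore
\[
\frac{d\Gamma_t}{\Gamma_{t-}}=-\big(c(t)+c_x(t)^\top X_t\big)dt+\int_E\big(e^{U_t(e)}-1\big)\tilde N(dt,de).
\]
This means no distortion exponent or change of measure is needed, unlike the Brownian case where $\delta=2c_{zz}$ was chosen. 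Under the martingale assumption of Proposition~\ref{prop:Transform}, formula \eqref{eq:ito_gamma} reduces to
\[
\Gamma_t=\exp\Big(\int_0^t\Theta^\top X_sds+\Xi+\int_t^Tc(s)ds\Big)\,\E\Big[\exp\Big(\int_t^T(\Theta+c_x(s))^\top X_sds\Big)\Big|\cF_t\Big].
\]

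The second step evaluates this conditional Laplace functional using the exponential-affine transform formula for affine Volterra processes with jumps \cite[Theorem A.1.]{dro2026optimal}, with the measure $m(ds)=(\Theta+c_x(s))ds$. Because $\sigma\equiv 0$, the Riccati--Volterra equation produced by that theorem has no quadratic Brownian term. Its nonlinearity is the jump term $(D^\top\psi)_i+\int_E h(\psi^\top\eta(e))\nu_i(de)$, which is exactly \eqref{eq:Riccatipsi_pureJ1}--\eqref{eq:Riccati_pureJ2}. The inhomogeneous part $\nu_0$ of the compensator produces the additional deterministic term $\int_t^T\int_E h(\psi(T-s)^\top\eta(e))\nu_0(de)ds$. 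Taking logarithms then gives the expression for $Y_t$ in \eqref{eq:Solbsde2}, written through the adjusted forward process $g_t(s)$ of \eqref{eq:processg}.

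The third step identifies $U$. I will differentiate $Y_t$ in $t$ directly, using $dg_t(s)=K(s-t)dZ_t$. The jump part of $dY_t$ is then
\[
\int_E\Big(\sum_i\int_t^T K_i(s-t)\big(\Theta_i+c_x^i(s)+F_i(\psi(T-s))\big)ds\;\eta^i(e)\Big)\tilde N(dt,de).
\]
After the change of variables $u=T-s$, the inner integral is $\psi^i(T-t)$ by \eqref{eq:Riccatipsi_pureJ1}. Hence $U_t(e)=\psi(T-t)^\top\eta(e)$, and $Z\equiv0$ because $\sigma\equiv0$.

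The drift of $dY_t$ has three pieces. The boundary terms at $s=t$ contribute $\Theta^\top X_t-c(t)-\sum_i(\Theta_i+c_x^i(t)+F_i(\psi(T-t)))X_t^i$ and $-\int_E h(U_t)\nu_0(de)$. The $D$-drift of $Z$ contributes $\psi(T-t)^\top DX_t=\sum_i (D^\top\psi)_i X^i_t$. Summing these, the $(D^\top\psi)_i$ terms cancel, and what remains is $-c(t)-c_x(t)^\top X_t-\int_E h(U_t)\pi(X_t,de)$. This matches \eqref{eq:bsde_mv_jumps}.

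I expect the main obstacle to be justifying the use of \cite[Theorem A.1.]{dro2026optimal}: finiteness of the exponential moment, and existence of a continuous solution $\psi$. The statement assumes the latter, and the jump terms remain integrable by Assumption~\ref{assump:kernelJumpVolterra}(ii) together with boundedness of $\psi$ on $[0,T]$. A second point of care is the stochastic Fubini step behind $d_t\int_t^T(\cdot)g_t(s)ds$ when $K$ is singular, which must be handled with the compensated measure $\tilde N$. Finally, I need to check that $U\in L^2_{\F}([0,T],\tilde N)$. This follows from $|U_t(e)|\le\|\psi\|_{\infty,T}\|\eta(e)\|$, the square-integrability \eqref{eq:IntJumpSize} of $\eta$, and the moment bound \eqref{eq:SupMoment_X}.
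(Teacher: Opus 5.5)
Your proposal is correct and follows essentially the paper's route: taking $\Gamma=\exp(Y)$ (the paper's distortion with $\delta=1$, where $h-h_1\equiv 0$), the Laplace representation from Proposition~\ref{prop:Transform}, the exponential-affine formula \cite[Theorem A.1.]{dro2026optimal}, and identification of $U$ from the $\tilde N$-part, with your explicit drift check being a useful addition that the paper leaves implicit. One minor caveat: since $h$ grows exponentially on $\R_+$ and $\eta\ge 0$, boundedness of $\psi$ and square-integrability of $\eta$ do not by themselves give $\int_E h(\psi^\top\eta(e))\,\nu_k(de)<\infty$ when $\psi$ has positive components (you would need an exponential moment of $\eta$, or $\psi\le 0$ as in Section~\ref{sect-appl}), so this finiteness is implicit in the hypotheses rather than a consequence of Assumption~\ref{assump:kernelJumpVolterra}~$(ii)$.
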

\begin{proof}
Similarly to the pure diffusion case, we apply the distortion technique, which consists of linearizing the driver w.r.t. $U$. Let $\delta \in \mathbb{R}$, by applying It\^o's formula to $\Gamma^\delta_t := \exp(\delta Y_t)$ we have
\begin{align*}
d\Gamma^\delta_t
&= \Gamma^\delta_{t-}
\left[
\delta\, dY^c_t
+ \int_E \bigl(e^{\delta U_t(e)}-1\bigr)\,N(dt,de)
\right] 
\\&= \Gamma^{\delta}_{t_-} \Big[-\delta \Big(c_x^\top X_t + c(t) + \int_E \big(h(U_t(e)) - h_{\delta}(U_t(e))\big)\pi(X_t, de)\Big) dt + \int_E \bigl(e^{\delta U_t(e)}-1\bigr)\,\tilde{N}(dt,de)\Big].
\end{align*}
Taking $\delta=1$ and noticing that, by assumption, the local martingale defined for every $ t\in[0,T]$ by \(M_t
:=
\mathcal{E}\!\left(
\int_0^t \int_E
\bigl(e^{ U_s(e)}-1\bigr)
\,\tilde{N}(ds,de)
\right)\) is a true $\mathbb{P}$-martingale, we deduce from equation~\eqref{eq:ito_gamma} of Proposition~\ref{prop:Transform} the following  representation
\begin{align*}
\Gamma_t
&=
\mathbb{E}
\!\Big[
\exp\!\Big(F\big(\int_0^T X_s ds\big)+
\int_t^T \bigl(c(t) + c_x(t)^\top X_t \bigr)\,ds
\Big)
\Bigm| \mathcal{F}_t
\Big] \\
&=
\exp\!\Big(\int_0^t \Theta^\top X_s ds + \Xi+\int_t^T c(s)ds
\Big)
\mathbb{E}
\!\Big[
\exp\!\Big(
\int_t^T \sum_{i=1}^d \big(\Theta_i+c^i_x(s)\big) X_s^i\,ds
\Big)
\Bigm| \mathcal{F}_t
\Big].
\end{align*}
Calling upon the exponential affine representaion~\cite[Theorem A.1.]{dro2026optimal} and following the same computations as in the purely diffusion case (see Section~\ref{subsect:Intuition_Brownian}), we obtain the desired expression for $Y$. Furthermore, by differentiating this expression and identifying the $\tilde{N}$-martingale part, we deduce by uniqueness that
\[
U_t(e)
=
\psi^\top(T-t)\,\eta(e),
\qquad t\in[0,T].
\]

This completes the proof.
\end{proof}
\subsection{General case and main result: A verification argument}\label{subsect:Gen_Case}
\noindent
We are now in a position state the main theorem, which provides an explicit representation of the solution processes
$(Y,Z,U)$ in terms of the solution to a system of generalized inhomogeneous Riccati-Volterra equations.
This contrasts with \cite[Theorem 3.5]{richter2014explicit}, where the solution processes are characterized via a system of standard Riccati ODEs.
It follows from Sections~\eqref{subsect:Intuition_Brownian} and~\eqref{subsect:Intuition_jumps} that \(Z\) and \(U\) must be proportional to the corresponding expressions given in equations~\eqref{eq:Solbsde} and~\eqref{eq:Solbsde}, respectively. The proof will then be completed by verification.

\begin{Theorem}[Explicit solution to the BSDEJ]\label{Thm_BDSEJ_Gen}
Let $X$ be an affine Volterra process $X$
associated to the admissible
parameter set $(b,a,\pi)$ in~\eqref{eq:affineProcess}. 
Define the functions:
\begin{equation}
\label{eq:Funct_General}
\begin{aligned}
    \bar{c}_y(s,\psi)
	&= c_y(s) + \int_{E} g_y(s, \psi^\top\eta(e)) \nu_0(de)
    \\[1mm]
F_i(s,\psi)
	&=  c^i_{z\sqrt{x}}(s) \sigma^i(s) \psi_i + (D^\top \psi)_i + e^{-\int_0^s \bar{c}_y(u,\psi)du} c^i_{zz} (s) \cdot \big(\sigma^i(s)\psi^i\big)^2 \\[1mm]
      &+ 
	  \int_E \Big( e^{\int_0^s \bar{c}_y(u,\psi)du}\,g^i_x\big(s,e^{-\int_0^s \bar{c}_y(u,\psi)du}\psi^\top\eta(e)\big) + g^i_{z\sqrt{x}}\big(s,e^{-\int_0^s \bar{c}_y(u,\psi)du}\psi^\top\eta(e)\big)\sigma^i(s) \psi_i \Big)\nu_0(de) \\[1mm]
	&\hspace{3cm}+ 
	  e^{\int_0^s \bar{c}_y(u,\psi)du}\int_E g_\nu\big(s,e^{-\int_0^s \bar{c}_y(u,\psi)du}\psi^\top\eta(e)\big)\; \nu_i(de), \quad i=1,\ldots,d,
	\\[1mm]
	G(s,\psi)
	&= e^{\int_0^T \bar{c}_y(u,\psi)du}\Theta
	+ e^{\int_0^s \bar{c}_y(u,\psi)du}c_x(s)
	+ F(s,\psi),\quad F:=(F_1, \ldots,F_d)^\top, \quad (s,\psi)\in[0,T]\times\mathbb{R}^d.
\end{aligned}
\end{equation}
Assume that there exists  a unique solution $\psi \in C([0,T],\R^d)$ to the generalized inhomogeneous integral
Riccati-Volterra equation with L\'evy jump compensator
\begin{equation}\label{eq:RiccatiGeneral} 
    \psi(t) = \int_0^t K(t-s) G(T-s,\psi(s)) ds, 
\end{equation}
Furthermore let $\phi$ the $\R$-valued function given by
\begin{equation}\label{eq:phi_ODE}
\begin{aligned}
\phi(t)
&= \Xi +  \int_t^T \Big( \bar{c}_y(s, \psi(T-s))\phi(s) +c(s) + \int_{E} g(s,e^{-\int_0^s \bar{c}_y(u,\psi(T-u))du}\psi(T-s)^\top\eta(e)) \nu_0(de)\Big)\,ds,
\end{aligned}
\end{equation}
for every $t\in[0,T]$.
Then the above BSDEJ~\eqref{eq:BSDEJ_general} has a unique $\mathbb{D}([0,T], \R) \times L^2_{\F}([0,T], \R^d)\times L^2_{\F}([0,T], \tilde{N})$-valued solution triplet $\left(Y,\Lambda, U\right)$
, reading for every $0 \leq t \leq T$
\begin{equation}\label{eq:BSDEJ_solGen}
\left\{
\begin{array}{ccl}
Y_t
&=& e^{\int_t^T \bar{c}_y(u,\psi(T-u)) du}\int_0^t \Theta^\top X_s ds +
 e^{-\int_0^t \bar{c}_y(u,\psi(T-u)) du} \int_t^T G(s,\psi(T-s))^\top  g_t(s) ds
+\phi(t),
\\[1mm]
Z_t
&=& e^{-\int_0^t \bar{c}_y(u,\psi(T-u)) du}\sqrt{\diag(X_t)}\,\sigma^\top(t)\,\psi(T-t),
\\[1mm]
U_t(e)
&=& e^{-\int_0^t \bar{c}_y(u,\psi(T-u)) du} \psi^\top(T-t) \eta(e), \qquad
\forall\,e\in E.
\end{array}
\right.
\end{equation}
where the $\R^d$-valued process \((g_t(s))_{t\leq s}\), $g$ $=$ $(g^1,\ldots,g^d)^\T$ denotes the
adjusted conditional $\mathbb{P}$-expected forward process defined by~\eqref{eq:processg}--\eqref{eq:Condprocessg}.
\end{Theorem}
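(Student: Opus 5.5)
The plan is a verification argument. Write $\beta(t):=e^{-\int_0^t \bar c_y(u,\psi(T-u))du}$. We take the triplet $(Y,Z,U)$ defined in \eqref{eq:BSDEJ_solGen} as an ansatz, compute its semimartingale decomposition, and check that it satisfies \eqref{eq:BSDEJ_general} with the driver \eqref{eq:Driver_General} and the affine terminal condition \eqref{eq:TermCond}. Uniqueness is then deduced afterwards.

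\textbf{Terminal condition.} At $t=T$ the integral $\int_t^T G^\top g_t\,ds$ vanishes and $\phi(T)=\Xi$. Hence $Y_T=\Theta^\top\int_0^T X_s ds+\Xi=F(\int_0^TX_sds)$, as required.

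\textbf{Dynamics of $Y$.} The key step is to differentiate $t\mapsto \int_t^T G(s,\psi(T-s))^\top g_t(s)\,ds$. By \eqref{eq:processg}, $dg_t(s)=K(s-t)\,dZ_t$ for $s>t$, where $Z$ is the driving semimartingale. A stochastic Fubini argument, justified by \eqref{eq:SupMoment_X}, the square integrability of $K$ and \eqref{eq:IntJumpSize}, gives
\[
d\Big(\int_t^T G^\top g_t(s)\,ds\Big)=-G(t,\psi(T-t))^\top X_t\,dt+\Big(\int_t^T G(s,\psi(T-s))^\top K(s-t)\,ds\Big)dZ_t ,
\]
using $g_t(t)=X_t$. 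The Riccati--Volterra equation \eqref{eq:RiccatiGeneral} is exactly what lets us recognise the last coefficient: after the change of variables $s\mapsto T-s$,
\[
\int_t^T K(s-t)G(s,\psi(T-s))\,ds=\psi(T-t),
\]
using that $K$ is diagonal. Thus $dY_t$ contains the martingale terms $\beta(t)\psi(T-t)^\top\sigma(t)\sqrt{\diag(X_t)}\,dW_t$ and $\int_E\beta(t)\psi(T-t)^\top\eta(e)\tilde N(dt,de)$. These identify $Z_t$ and $U_t(e)$ as in \eqref{eq:BSDEJ_solGen}. The drift contains $\beta\,\psi(T-t)^\top D X_t$ from $dZ$, the term $-\beta\,G^\top X_t$, the term $\Theta^\top X_t$ from $d\int_0^t\Theta^\top X_s\,ds$, and the derivatives of the deterministic factors $\beta$ and $\phi$.

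\textbf{Matching the drift.} Substitute $Z_t=\beta\sqrt{\diag(X_t)}\sigma^\top\psi$ and $U_t=\beta\psi^\top\eta$ into $f$. The quadratic term becomes $\beta^2\sum_i c_{zz}^i(\sigma^i\psi^i)^2X^i_t$, which is linear in $X$. The cross term $z^\top\diag(c_{z\sqrt x})\sqrt x$ becomes $\beta\sum_i c^i_{z\sqrt x}\sigma^i\psi^iX^i$. The $\nu$-terms and $\nu_0$-terms are linear in $X$ or constant. The $y$-dependence $c_yY+\int_E g_y\,\nu_0\,Y$ equals $\bar c_yY$, which is the reason the exponential factors $\beta$ are built into the ansatz. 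Collecting the coefficients of $X_t$, those of $\int_0^tX\,ds$, those of $\int_t^T G^\top g_t\,ds$, and the constant terms then reduces $-dY_t/dt=f$ to three identities. The coefficient of $X_t$ must vanish, which is the definition of $G=e^{\int_0^T\bar c_y}\Theta+e^{\int_0^s\bar c_y}c_x+F$ with $F_i$ as in \eqref{eq:Funct_General}. The coefficient of the $Y$-linear part must match $\bar c_y$, which determines the exponential prefactors. The constants must match, which is exactly the linear ODE \eqref{eq:phi_ODE} for $\phi$.

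\textbf{Main obstacle.} The exponential factors are the delicate point. As stated, $F$ evaluates $\bar c_y$ along $\psi(T-\cdot)$, and one must check carefully that the prefactors $e^{\pm\int\bar c_y}$ in $Y$ are consistent with those in $G$. If the bookkeeping fails, I would instead first remove $\bar c_y$ by working with $\tilde Y_t=\beta(t)^{-1}Y_t$ and then reduce to the case $\bar c_y=0$.

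\textbf{Integrability and uniqueness.} $Z\in L^2_{\F}$ and $U\in L^2_{\F}(\tilde N)$ follow from the continuity of $\psi$, \eqref{eq:SupMoment_X} and \eqref{eq:IntJumpSize}. $Y$ is c\`adl\`ag since $g_t(s)$ is c\`adl\`ag in $t$. For uniqueness I would use Proposition~\ref{prop:Transform}: under the true-martingale assumption, any solution yields the Laplace representation \eqref{eq:ito_gamma}. That representation is computed uniquely through the exponential-affine formula \cite[Theorem A.1.]{dro2026optimal}, given the assumed uniqueness of $\psi$. Uniqueness of $(Z,U)$ then follows from uniqueness of the martingale representation.
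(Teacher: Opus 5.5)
Your existence argument is the paper's own verification proof. It uses stochastic Fubini for $t\mapsto\int_t^T G(s,\psi(T-s))^\top g_t(s)\,ds$ with $g_t(t)=X_t$, and the identity $\int_t^T K(s-t)\,G(s,\psi(T-s))\,ds=\psi(T-t)$ read off from \eqref{eq:RiccatiGeneral} after $s\mapsto T-s$. It then identifies $Z$ and $U$ from the martingale part and matches the drifts. On the point you flagged, the prefactors $e^{\pm\int\bar c_y}$ in $G$ do close exactly. Dividing the identity for the $X_t$-coefficient by $\beta(t)$ reproduces \eqref{eq:Funct_General} term by term, once $\int_0^s\bar c_y(u,\psi)\,du$ is read along $\psi(T-u)$. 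What does not close as written is your claim that the $y$-terms of $f$ give $\bar c_y Y$. The driver contributes $\big(c_y(t)+\int_E g_y(t,U_t(e))\,\nu_0(de)\big)Y_t$ with $U_t(e)=\beta(t)\psi(T-t)^\top\eta(e)$. Your ansatz instead produces $\bar c_y(t,\psi(T-t))\,Y_t$, in which $g_y$ is evaluated at $\psi(T-t)^\top\eta(e)$. The two agree only if $g_y(t,\cdot)$ is insensitive to the rescaling by $\beta(t)$ (e.g.\ $g_y\equiv 0$, as in Section~\ref{sect-appl}). Otherwise $\bar c_y$ must be defined through $U$ and becomes part of the fixed point. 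The paper's proof makes the same identification, so this is a defect of the statement rather than of your route.

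The genuine gap is your uniqueness step. The representation in Proposition~\ref{prop:Transform} is not a closed formula. Its exponent contains $f(s,X_s,\log\Gamma_s,Z_s,U_s)-\tfrac12|Z_s|^2-\int_E h(U_s(e))\,\pi(X_s,de)$, which involves the unknown solution unless $f-\tfrac12|z|^2-\int_E h(u(e))\,\pi(x,de)$ depends on $(t,x)$ only. For the general driver \eqref{eq:Driver_General} you therefore cannot apply Theorem~A.1 of \cite{dro2026optimal}, which needs a deterministic measure paired with $X$ in the exponent. The paper obtains a closed representation only in the special cases of Sections~\ref{subsect:Intuition_Brownian} and~\ref{subsect:Intuition_jumps}: the power $\Gamma^{2c_{zz}}$ plus a Girsanov change of measure for constant $c_{zz}$ with equal components, resp.\ $g=g_\nu=h$.

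Moreover, the true-martingale property of the stochastic exponential is a hypothesis on a particular solution. You do not have it for every triplet in $\mathbb D([0,T],\R)\times L^2_{\F}([0,T],\R^d)\times L^2_{\F}([0,T],\tilde N)$. This class imposes no exponential integrability on $Y$, which is exactly what uniqueness results for quadratic BSDE(J)s require, together with structural conditions on $f$ such as convexity in $z$. Also, given $Y$, the pair $(Z,U)$ is determined by uniqueness of the canonical decomposition of the special semimartingale $Y$. It is not determined by martingale representation, which may fail since $\F$ can be strictly larger than the filtration of the noises.

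The paper's proof does not prove uniqueness either; it only verifies that the explicit triplet is a solution. To justify ``unique'' you would have to restrict to a class with suitable exponential integrability of $Y$. You would then need to check that the explicit triplet belongs to it, which is a real constraint since exponential moments of $\int_0^T X_s\,ds$ are finite only below an explosion threshold. Finally, you would need to prove or invoke a comparison theorem for quadratic BSDEJs under additional assumptions on $f$.
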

 
\noindent{\bf Remark:} 1. It is worth noting that the function $\phi$ in~\eqref{eq:phi_ODE} admit the closed-form variation-of-constants solution:
\begin{equation*}
\begin{aligned}
\phi(t)
=&  \int_t^T e^{\int_t^s \bar{c}_y\big(s,\psi(T-u)\big) du} \Big( c(s) + \int_{E} g(s,e^{-\int_0^s \bar{c}_y(u,\psi(T-u)) du}\psi(T-s)^\top\eta(e)) \nu_0(de)\Big)\,ds\\[1mm]
&\hspace{2cm}+ \Xi \, e^{\int_t^T \bar{c}_y(u,\psi(T-u)) du}.
\end{aligned}
\end{equation*}
\noindent 2. Equation~\eqref{eq:BSDEJ_solGen} can be reformulated by setting \(\widetilde{\psi}(T-t):=e^{-\int_0^t \bar{c}_y(u,\psi(T-u)) du}\psi(T-t)\) for every \(t\in[0,T]\), where $\widetilde{\psi}$ is the unique solution to a well-defined riccati Volterra equation with non-convolution kernel in general ($K(t,s):=e^{-\int_s^t c_y(u)\,du}
K(t-s)$).
In the Markovian case $K\equiv 1_{d\times d}$, the corresponding equation reduces to an ordinary Riccati equation, and the usual variation-of-constants transformation allows the $(\bar{c}_y)$-term to enter linearly in the Riccati equation~\eqref{eq:Funct_General}--\eqref{eq:RiccatiGeneral}, as in the standard ODE setting (see e.g.~\cite[Theorem 3.5.]{richter2014explicit}).

\noindent 3.  If \(g_{z\sqrt{x}}\equiv g_{x}\equiv g_{\nu}\equiv g_{y} \equiv0\) and \(\nu_i\equiv 0\) for every \(i=1,\ldots,d\) in Equation~\eqref{eq:Funct_General}, then the existence of a unique  non-continuable  solution $\psi \in C([0,T_{max}),\R^d)$ to the deterministic Volterra equation~\eqref{eq:RiccatiGeneral} under suitable assumption on the Kernel \(K\) is given by Theorem~\ref{Thm:Riccatilocalexistence}.

\medskip
\noindent{\bf Proof of Theorem \ref{Thm_BDSEJ_Gen}:}
Let $\psi$ and $\phi$ be the unique solutions of~\eqref{eq:RiccatiGeneral} and ~\eqref{eq:phi_ODE}. We apply It\^o-L\'evy formula for
semimartingales to the map \(t\to Y_t\) using the representation~\eqref{VolSqrt2}. 
We recall that $Y_t = A_t + H_t + \phi(t),$ for every \(0\leq t\leq T\) with
\begin{equation*}
     A_t:= e^{\int_t^T \bar{c}_y(u,\psi(T-u)) du}\int_0^t \Theta^\top X_s ds\quad\text{and}\quad H_t:= e^{-\int_0^t \bar{c}_y(u,\psi(T-u)) du} \int_t^T G(s,\psi(T-s))^\top  g_t(s) ds.
\end{equation*}
One checks that:
\begin{equation*}
\left\{
\begin{array}{ccl}
d A_t
&=& - \bar{c}_y(t,\psi(T-t)) A_t dt + e^{\int_t^T \bar{c}_y(u,\psi(T-u)) du} \Theta^\top X_t dt,
\\[1mm]
d H_t
&=&  - \bar{c}_y(t,\psi(T-t)) H_t dt +
 e^{-\int_0^t \bar{c}_y(u,\psi(T-u)) du} \big(-G(t,\psi(T-t))^\top  g_t(t) \\[1mm]
 &+& \int_t^T G(s,\psi(T-s))^\top  dg_t(s) ds\big).
\end{array}
\right.
\end{equation*}
Therefore, the dynamics of \( Y \) can readily be obtained by recalling \( g_t(s) \) from ~\eqref{eq:processg} and by observing that for fixed \( s \), the dynamics of \( t \to g_t(s) \) are given for every \(t \leq s\) by \(	dg_t(s) = K(s-t) \, dZ_t,\) where \(dZ_t=\big(D X_t dt + \sigma(t) \sqrt{\diag(X_t)}dW_t + \int_{E}\eta(e)\tilde{N}(dt,de) \big)\).
Since \( g_t(t) = X_t \), invoking the stochastic Fubini theorem in \cite[Theorem 2.2]{Veraar2012},
shows that the dynamic or the semimartingale decomposition of $Y$ reads as:
\begin{align}
    &\, dY_t = - \bar{c}_y(t,\psi(T-t)) \big(A_t + H_t + \phi(t)\big) dt + \Big(e^{\int_t^T \bar{c}_y(u,\psi(T-u)) du} \Theta^\top X_t-c(t)\nonumber\\[1mm]
    &\hspace{.9cm}-e^{-\int_0^t \bar{c}_y(u,\psi(T-u)) du} G(t,\psi(T-t))^\top  X_t - \int_{E} g(t,e^{-\int_0^t \bar{c}_y(u,\psi(T-u))du}\psi(T-t)^\top\eta(e))\nu_0(de)\Big) dt\nonumber\\[1mm]
    &\hspace{.9cm} 
    + e^{-\int_0^t \bar{c}_y(u,\psi(T-u)) du} \int_t^T G(s,\psi(T-s))^\top  K(s-t) ds\, dZ_t \nonumber\\[1mm]
    &\hspace{.6cm}= - \Big( c(t)  +   \bar{c}_y(t,\psi(T-t)) Y_t +  c_x(t)^\top X_t +  X_t^\top \int_E g_\nu\big(t,e^{-\int_0^t \bar{c}_y(u,\psi(T-u))du}\psi(T-t)^\top\eta(e)\big)\,\nu(de)  \nonumber\\[1mm]
    &\hspace{.4cm}+ \int_E g_x\big(t,e^{-\int_0^t \bar{c}_y(u,\psi(T-u))du}\psi(T-t)^\top\eta(e)\big)^\top X_t + g(t,e^{-\int_0^t \bar{c}_y(u,\psi(T-u))du}\psi(T-t)^\top\eta(e)) \nu_0(de)  \nonumber\\[1mm]
    &\hspace{.4cm} + e^{-\int_0^t \bar{c}_y(u,\psi(T-u))du}\Big(\big(D^\top \psi(T-t)\big)^\top X_t +\sum_{i=1}^d X^i_t \big( e^{-\int_0^t \bar{c}_y(u,\psi(T-u))du} c^i_{zz} (t) \cdot \big(\sigma^i(t)\psi^i(T-t)\big)^2 \nonumber\\[1mm]
    &\hspace{.4cm}+ c^i_{z\sqrt{x}}(t) \sigma^i(t) \psi_i(T-t) + \int_E g^i_{z\sqrt{x}}\big(t,e^{-\int_0^t \bar{c}_y(u,\psi(T-u))du}\psi(T-t)^\top\eta(e)\big)\sigma^i(t) \psi_i(T-t) \nu_0(de) \big)  \Big)  \Big)dt \nonumber\\[1mm]
    &\hspace{.4cm}+ e^{-\int_0^t \bar{c}_y(u,\psi(T-u)) du}\psi(T-t)^\top\, \big(D X_t dt + \sigma(t) \sqrt{\diag(X_t)}dW_t + \int_{E}\eta(e)\tilde{N}(dt,de) \big) \label{eq:DynmY_}
    \end{align}
    Now, noting that:
   \begin{align*}
    &\, \psi(T-t)^\top \sigma(t)\sqrt{\diag(X_t)}
	  e^{-\int_0^t \bar{c}_y(u,\psi(T-u))du} \diag\big(c_{zz}(t)\big)\sqrt{\diag(X_t)}\sigma^\top(t)\psi(T-t)\nonumber\\[1mm]
    &\hspace{.9cm}=\sum_{i=1}^d e^{-\int_0^t \bar{c}_y(u,\psi(T-u))du} c^i_{zz} (t) \cdot \big(\sigma^i(t)\psi^i(T-t)\big)^2\,  X^i_t
    \end{align*}
    which coincides with the first term in the sum appearing in the above equality~\eqref{eq:DynmY_}. Analogously, the remaining two terms are identified with the corresponding matrix-product form. 
    Consequently, the dynamics of $Y$ reduce to
   \begin{align}
    &\, dY_t =- \Big( 
    c(t) +  \psi(T-t)^\top \sigma(t)\sqrt{\diag(X_t)}
	  e^{-2\int_0^t \bar{c}_y(u,\psi(T-u))du} \diag\big(c_{zz}(t)\big)\sqrt{\diag(X_t)}\sigma^\top(t)\psi(T-t) \nonumber\\[1mm]
    &\hspace{.9cm}+  c_x(t)^\top X_t + e^{-\int_0^t \bar{c}_y(u,\psi(T-u)) du} \psi(T-t)^\top \sigma(t)\sqrt{\diag(X_t)} \diag\big(c_{z\sqrt{x}}(t)\big)\sqrt{X_t} \nonumber\\[1mm]
    &\hspace{.9cm}+  X_t^\top \int_E g_\nu\big(t,e^{-\int_0^t \bar{c}_y(u,\psi(T-u))du}\psi(T-t)^\top\eta(e)\big)\,\nu(de) + \int_{E} g_y(t, \psi(T-t)^\top\eta(e))Y_t \,\nu_0(de) \nonumber\\[1mm]
       &\hspace{.9cm} + \,c_y(t) Y_t + 
	  \int_E \Big(\psi(T-t)^\top\sigma(t)\sqrt{\diag(X_t)}\diag\big(g_{z\sqrt{x}}\big(t,e^{-\int_0^t \bar{c}_y(u,\psi(T-u))du}\psi(T-t)^\top\eta(e)\big) \big)\sqrt{X_t}\nonumber\\[1mm]
	&\hspace{.9cm}+ g_x\big(t,e^{-\int_0^t \bar{c}_y(u,\psi(T-u))du}\psi(T-t)^\top\eta(e)\big)^\top X_t + g(t,e^{-\int_0^t \bar{c}_y(u,\psi(T-u))du}\psi(T-t)^\top\eta(e)) \Big)\nu_0(de) \nonumber\\[1mm]
    &\hspace{.2cm} + \underbrace{e^{-\int_0^t \bar{c}_y(u,\psi(T-u)) du} \psi(T-t)^\top \sigma(t)\sqrt{\diag(X_t)}}_{=Z_t^\top}dW_t+  \int_{E} \underbrace{e^{-\int_0^t \bar{c}_y(u,\psi(T-u)) du} \psi(T-t)^\top\eta(e) }_{=U_t(e)}\tilde{N}(dt,de),\label{eq:DynmY}
\end{align}
where we recognize the expressions for \(Z\) and \(U\) given by formulas~\eqref{eq:BSDEJ_solGen}.
Hence the BSDEJ~\eqref{eq:BSDEJ_general}--\eqref{eq:BSDEJ_general} is solved by~\eqref{eq:BSDEJ_solGen} provided the finite variation parts of~\eqref{eq:DynmY} and the
BSDEJ coincide. Now, owing to the special form of the generator \(f\) in~\eqref{eq:BSDEJ_general}, together with the expressions of \(Z\) and \(U\) in~\eqref{eq:BSDEJ_solGen}, it boils down that
\begin{equation}
dY_t=- f(t,X_t, Y_t, Z_t, U_t) + Z_s^{\top}\,dW_s + \int_{E}U_s(e)\tilde{N}(ds,de)
\end{equation}
As for the terminal condition, it is straightforward that $Y_T
=
\int_0^T \Theta^\top X_s\,ds+\phi(T) = \Xi+\int_0^T\Theta^\top X_s\,ds= F\big(\int_0^T X_s\,ds\big)$ owing to~\eqref{eq:TermCond}.

\smallskip\noindent
It remains to show that, under our assumption $\left(Z, U \right)\in L^2_{\F}([0,T], \R^d) \times L^2_{\F}([0, T],\tilde{N})$. For \(Z\), it is clear that its belongs to \(L^2_{\F}([0,T], \R^d)\) since 
\(c_y\), \(\sigma\) and \(\psi\) are countinuous, thus bounded on \([0,T]\) and \(\mathbb{E}\!\Big[
\int_0^T
|X_s|\,ds
\Big]
<\infty\)
by the inequality~\eqref{eq:SupMoment_X} .
Likewise, $U \in L^2_{\F}([0, T],\tilde{N})$ due to the boundary property of $\psi$ and integrability condition in \eqref{assump:kernelJumpVolterra}~$(ii)$.\\
Now, from~\eqref{eq:BSDEJ_general}, $t\mapsto Y_t$ has c\`adl\`ag paths (up to indistinguishability or modification) on the compact interval \([0,T]\).
Moreover, since $U$ is deterministic, the L\'evy-integrated $g$-coefficients
evaluated at $U$ are deterministic functions of time.
 Under the continuity assumptions on the coefficients, together with Young's inequality, there exists a constant $C>0$ such that
 \begin{equation}
   \forall\, t\in [0,T],\qquad |f(t,X_t,Y_t,Z_t,U_t)|
\leq C\Big(
1+|X_t|+|Y_t|+|Z_t|^2
\Big).
\end{equation}
Hence, since $Z\in L^2_{\mathbb F}([0,T],\mathbb{R}^d)$ and \(\mathbb{E}\!\left[\int_0^T |X_s|\,ds\right]<\infty,\) owing to~\eqref{eq:SupMoment_X} i.e.  $X\in L^1([0,T],\mathbb{R}^d)$ pathwise  
while $Y$ is c\`adl\`ag and hence bounded on $[0,T]$ almost surely, we obtain \(\mathbb{E}\!\Big[
\int_0^T
\big(
1+|X_s|+|Y_s|+|Z_s|^2
\big)\,ds
\Big]<\infty\) 
and therefore $\int_0^T
|f(s,X_s,Y_s,Z_s,U_s)|\,ds<\infty
\;\text{a.s.}$
Consequently,
the BSDEJ~\eqref{eq:BSDEJ_general}--\eqref{eq:BSDEJ_general} is solved by~\eqref{eq:BSDEJ_solGen} in the sense of Definition~\ref{def:Solut_bsde}. 
This completed the Proof. \hfill $\Box$

\section{Applications to continuous-time Mean-variance optimization problem under multivariate affine Volterra models with jumps}\label{sect-appl}
In this section, we first provide an overview of the probabilistic market model. We then introduce the Markowitz portfolio optimization problem and derive a candidate optimal portfolio strategy in section~\ref{Sec:Markowitz}. 
Finally, in Section~\ref{Verif_result}, we provide an explicit solution to the problem under a more general multidimensional correlation structure between asset returns and their respective volatilities, as in \cite[Theorem~4.2]{HanWong2020a}, where the analysis is carried out in the one-dimensional case ($d=1$). The optimal value will be characterized by the solution of an associated Riccati BSDEJ whose generator is quadratic in the Brownian stochastic integrand (denoted by \(\Lambda\)) and with an exponential term involving the Poisson stochastic integrand variable (denoted by \(U\)). 

\subsection{Formulation of the stochastic Market model}\label{subsect:Market_model}
We consider the model introduce in~\cite{dro2026optimal}, where the volatility incorporates roughness and memory effects as well as both Brownian and Poisson sources of randomness, as considered in~\cite{BondiLivieriPulido2024,AlfonsiSzulda2025}, while the underlying asset prices remain continuous, in the spirit of~\cite{HanWong2020a,AbiJaberMillerPham2021, Gnabeyeu2026b}.
Fix $T > 0$, $d\in \N$. Let a frictionless financial market on $[0,T]$  
with \(d+1\) securities, consisting of a  bond and \(d\) stocks. The non--risky asset  $S^0$ satisfies the (stochastic) ordinary differential equation: 
\begin{align*}
	dS^0_t = S^0_t r(t) dt,
\end{align*}
with a time-dependent deterministic  short risk-free rate $r:\R_+ \to \R$, and  $d$ risky assets (stock or index) whose return vector process $(S_t)_{t \ge 0} = (S_{t}^1, \ldots, S_{t}^d)_{t \ge 0}$ is defined via the dynamics given by the vector-stochastic differential equation (SDE):
\begin{align}
	\label{eq:stocks}
	dS_t = \diag(S_t) \big[ \big( r(t) {\bold{1}_d} + \sqrt{\diag(V_t)} \lambda_t  \big)dt + \sqrt{\diag(V_t)}dB_t \big],
\end{align}
driven by a $d$-dimensional Brownian motion $B$, with a $\R^d$-valued continuous stochastic process $\lambda$, 
called {\it market price of risk} and a $d\times d$-matrix valued  continuous stochastic volatility process $\sqrt{\diag(V)}$ whose dynamics is driven by $V=(V^1,\ldots, V^d)^\top$,  a $\R^d_+$--valued scaled Volterra square--root process with jumps, itself driven by an $d$-dimensional process $W=(W^1,\ldots,W^d)^\T$ and the compensated random measure $\tilde{N}$:
\begin{equation} 
	\label{VolSqrt_}
	\begin{aligned}
		V_t = \varphi(t) V_0 + \int_0^t K(t-s) \Big( \big(\mu(s) + D V_s\big) ds  + \sigma^v \varsigma(s)\sqrt{\diag(V_s)}dW_s + \int_{E}\eta(e)\tilde{N}(ds,de) \Big), \; V_0\perp\!\!\!\perp W \perp\!\!\!\perp \tilde{N}.
	\end{aligned}
\end{equation}
Here $K=\diag(K_1,\ldots,K_d)$ is a diagonal with scalar kernels $K_i\in L^{2}([0,T],\R)$ on the diagonal, $\varphi=\diag(\varphi^1, \ldots,\varphi^d)$, $\mu : [0,T] \to \mathbb{R}^d_{+}$ a positive function, $\sigma^v=\diag(\sigma^v_1,\ldots,\sigma^v_d)$, $\varsigma=\diag(\varsigma^1,\ldots,\varsigma^d)$ with \(\varsigma^i\) a (locally) positive bounded Borel function, $\eta = (\eta^1,\ldots,\eta^d)^\top$ a measurable map $\eta : E \to \mathbb{R}^d$ and $D$ a $\R^{d\times d}$-valued matrix satisfying
\[
D \in \mathbb{R}^{d \times d}, 
\qquad 
D_{ij} \ge 0 \ \text{for } i \neq j.
\]
The correlation structure of $W$ with $B$ is given by
{\begin{align}\label{eq:correstructureheston}
		W^i = \rho_i B^i + \sqrt{1-\rho_i^2} B^{\perp,i} =  \Sigma_i^\top  B_t + \sqrt{1-\Sigma_i^\T \Sigma_i} B^{\perp,i}_t, \quad i=1,\ldots,d,
	\end{align}
	for some $(\rho_1,\ldots,\rho_d)\in[-1,1]^d$},
where $(0,\ldots,\rho_i,\ldots,0)^\T:=\Sigma_i \in \R^{d}$ is such that $\Sigma_i^\T \Sigma_i\leq1$,  and $B^{\perp}$ $=$ $(B^{\perp,1},\ldots,B^{\perp,d})^\T$ is an $d$--dimensional Brownian motion independent of $B$. The correlation $\rho_i $ between stock price \(S^i\) and variance \(V^i\) is assumed constant.
Note that $d\langle W^i \rangle_t = dt$  but $W^i$ and $W^j$ can be correlated, hence $W$ is not necessarily a Brownian motion.

\medskip
\noindent Observe that 
processes $\lambda$ and $\sigma$ are $\F$-adapted, possibly unbounded,  but not necessarily adapted to the filtration generated by $W$.  We point out that $\F$ may be strictly larger than the augmented filtration generated by $B$ and $B^{\perp}$ as we deal with weak solutions to stochastic Volterra equations. 

\medskip
\noindent
 We will be chiefly interested in the case where \(\lambda_t\) is linear
in \(\sigma_t\). More specifically, the market price of risk (risk premium) is assumed to be in the form
{$\lambda$ $=$ $\big(\theta_1\sqrt{V^1},\ldots,\theta_d \sqrt{V^d}\big)^\top$}, for some constant {$\theta_i \geq 0$},  so that the dynamics  for the stock prices \eqref{eq:stocks} reads following \cite{Kraft2005,abi2019affine}
\begin{align}
	\label{eq:hestonS} 
	dS^i_t = S^i_t \left( r(t)  + \theta_i V^i_t   \right) dt + S^i_t \sqrt{V^i_t} dB^i_t, \quad i=1,\ldots, d.
\end{align}
 Note that our model (\ref{eq:hestonS})-(\ref{eq:correstructureheston})-(\ref{VolSqrt_}) features correlation between the stocks and between a stock and its volatility. 
Since \(S\) is fully determined by \(V\), the existence of $S$ readily follows from that of $V$. In particular, weak existence of c\`adl\`ag solution $V$ of~\eqref{VolSqrt_} satisfying~\eqref{eq:SupMoment_X} is established  under suitable assumptions~\ref{assump:kernelJumpVolterra} on the kernel $K$ and the jump components $(\eta,\pi)$ together with the specifications of the input curve $g_0$. 
In view of Assumption~\ref{assump:kernelJumpVolterra} and Remark~\ref{rm:Kernels}, we may therefore assume that the stochastic Volterra equation (\ref{eq:hestonS})-(\ref{VolSqrt_})  admits a unique in law 
c\`adl\`ag $\R^{d}_+ \times \R^{d}_+$-valued weak solution $(S,V)$ for any initial condition $(S_0, V_0) \in \R^{d}_+ \times L^1_{\R^{d}_+}(\P)$ defined on some filtered probability space $(\Omega,\mathcal F, (\mathcal F)_{t\geq 0}, \mathbb P)$ such that $V$ satisfied~\eqref{eq:SupMoment_X}.

\medskip
\noindent {$\rhd$ {\em Problem formulation}:}  Since we consider weak solutions to the stochastic Volterra equations with jumps \eqref{VolSqrt_}--\eqref{eq:hestonS}, the Brownian motions and the Poisson random measure are part of the solution itself. Moreover, 
the expected terminal wealth and its variance depends only on the law of the wealth process, we may, without loss of generality, fix a weak solution $(S,V,B,B^\top,N)$ provided above, defined on the filtered probability space $(\Omega,\mathcal F,\mathbb F,\mathbb P)$, where $\mathbb F=(\mathcal F_t)_{t\ge0}$ satisfies the usual conditions.
\subsection{Markowitz portfolio selection: Mean-variance optimization problem}\label{Sec:Markowitz}

\medskip
Let $\pi_t= (\pi_{t,1}, \ldots, \pi_{t,d})^\top$ denote the vector of the amounts of money invested in the risky assets $S$ at time $t$ in a self--financing strategy where $\pi_{t,k}$ represents the amount of money invested in asset $k$ at time $t$, and the remaining amount $X^\pi_t-\pi_t^\top {\bold{1}_d}$ in a bond of price $S_t^0$
with interest rate $r(t)$. The notation $X^\pi_t$ emphasizes the dependence of the wealth on the strategy $\pi = (\pi_t)_{t \ge 0}$.
 We assume that the process \((\pi_t)_{t\geq0}\) are progressively measurable. Then, the dynamics of the wealth $X^{\pi}$ of the portfolio is given by 
\begin{align*}
	\mathrm d X^\pi_t
	&=  \bigl(  \pi_t^\top \big( r(t) {\bold{1}_d} + \sigma(V_t) \lambda_t  \big) + (X^\pi_t - \pi_t^\top {\bold{1}_d})r(t) \bigr)\,\mathrm dt + \pi_t^\top\sigma(V_t)  \,\mathrm d B_t \\
	&=  \bigl(r(t) X^\pi_t + \pi_t^\top  \sigma(V_t) \lambda_t\bigr)\,\mathrm dt + \pi_t^\top   \sigma(V_t) \,\mathrm d B_t
\end{align*}
Let $\alpha_t := \sigma^\top(V_t)\pi_t $ be the investment strategy, with $\alpha_t = (\alpha_{t,1}, \ldots, \alpha_{t,d})^\top$, an $\mathbb{R}^d$-valued,  $\mathbb{F}$-progressively measurable process.
By \(\mathcal{A}\) we denote the set of admissible portfolio or investment strategies i.e. the set of all $\mathbb{F}$-progressively measurable processes $ (\alpha_t)_{t \in [0,T]}$ valued in the Polish space  \(\R^d\). Under a fixed portfolio strategy \(\alpha\), the dynamics of the corresponding wealth process $(X_t^\alpha)_{t \ge 0}$ of the portfolio we seek to optimize is given by 
\begin{align}
	\label{eq:wealth}
	dX^{\alpha}_t &= \big( r(t) X^{\alpha}_t  + \alpha_t^\T \lambda_t \big) dt + \alpha_t^\T dB_t, \quad t \geq 0, \quad X_0^\alpha = x_0 \in \R. 
\end{align} 
\noindent 
The goal is to determine the control \( \alpha(\cdot) \) that maximizes the expected value of a certain cost functional to be specified latter, which accounts for the terminal cost.
\noindent By a solution to~\eqref{eq:wealth}, we mean an $\F$-adapted continuous process $X^{\alpha}$ satisfying \eqref{eq:wealth} on $[0,T]$ $\P$-a.s. (that is the wealth process~\eqref{eq:wealth} has a unique solution, on $[0,T]$ with $\P$-a.s. continuous sample paths ) and such that 
\begin{align}
	\label{eq:estimateX}
	\E\big[\sup_{t\leq T} |X^{\alpha}_t|^2 \big] &< \;  \infty.
\end{align}
By a standard calculation, the wealth process is then given by
\begin{equation}\label{eq:wealthProcess} 
	X_t = e^{ \int^t_0 r(s) ds} \left( x_0 + \int_0^t e^{ -\int^s_0 r(u) du}  \left(\alpha_s^\T dB_s + \alpha_s^\T \lambda_s \mathrm{d}s \right) \right), 
\end{equation}
Note that it is sufficient to assume that \(\int_0^t (\left|\lambda_s\right|^2 + \left|\alpha_s\right|^2 )\, \mathrm{d}s < +\infty \) almost surely for all \( t \ge 0\)
in order to construct the stochastic integrals in Equation~\eqref{eq:wealthProcess}. This boundedness condition holds owing to the inequality \( a^2 \le 2 e^{a}, \, \forall a \geq 0 \) (take \(a=\int_0^t \left|\lambda_s\right|^2\, \mathrm{d}s\) ), together with the condition introduced later~\eqref{eq:assumption_novikov}, and the following admissibility assumption, which is consistent with \cite{ChiuWong2014,Shen2015,AbiJaberMillerPham2021}. 
\begin{Definition}\label{Def:adm3}
	In the setting described above, we say that an investment strategy $\alpha(\cdot)$ is admissible if
	\begin{enumerate}
		\item[$(a)$] The SDE~\eqref{eq:wealth} for the wealth process $(X_t^{\alpha})$ has a unique solution in terms of $(S,V,B)$ satisfying \eqref{eq:estimateX} with $\p$-$\as$ continuous paths.
		\item[$(b)$] $\alpha(\cdot)$ is progressively measurable and $\int^T_0 \left|\alpha_s \right|^2ds < \infty$, $\p$-$\as$;
	\end{enumerate}
	The set of all admissible controls is denoted as $\cA$ and is naturally defined as the collection of processes $\c$ below:
	$$\mathcal A  = \{ \alpha \in {L^{2,loc}_{\F}([0,T], \R^d)} \mbox{ such that \eqref{eq:wealth} has a  solution satisfying } \eqref{eq:estimateX} \}.$$
\end{Definition}
\noindent By Definition~\ref{Def:adm3}, the wealth process corresponding to \(\alpha \in \mathcal A\) satisfies~\eqref{eq:estimateX} so that both the expectation and the variance of the terminal wealth (\(X_T\)) are well defined.
\noindent The agent's objective is to find a portfolio \(\alpha\) such that \((X^\alpha(\cdot), \alpha(\cdot))\) satisfy ~\eqref{eq:wealth} and the expected terminal wealth satisfies \(\E[X_T] = m\) for some \(m\in \R\), while the risk measured by the variance of the terminal wealth $ \V (X_T)=\E\big[\big( X_T-\E[ X_T ]\big)^2\big]$  is minimized. The constant \(m\in \R\) is the target wealth level at the terminal time \(T\).

\noindent The risk-free investment being possible, as the interest rate process \(r\) is deterministic, the agent can expect a terminal wealth at least \(m_0:=x_0e^{\int_{0}^{T}r(s)ds}\) and hence it is reasonable to restrict \(m\geq m_0\), which was initially introduced by~\cite{LimZhou2002}.

\noindent The Markowitz portfolio selection problem in continuous-time thus consists in solving the following stochastic optimization problem with linear equality constraints parameterized by \(m\)
\begin{align} \label{optimization_problem}
	V(m) &  := \; \inf_{\substack{\c \in \Acal}}\big\{ \V (X_T):   \text{s.t. } \E[X_T] = m \big\}. 
\end{align}
i.e. given some expected return value $m$ $\in$ $\R$.  
Here an optimal portfolio of the problem is called an efficient portfolio, the corresponding \((\V (X_T),m)\) is called an efficient point, and the set of all efficient points is called an efficient frontier when \(m\) goes over \([m_0,+\infty)\).

\medskip
\noindent The MV problem is said to be feasible for
\(m\geq m_0\) if there exists a \(\alpha\in \mathcal A\) that satisfies \(\E[X_T] = m\). The feasibility of our problem is guaranteed for any \(m\geq m_0\) by a slight modification to the proof in~\cite[26, Propsition 6.1]{Lim2004}.

\noindent As the mean-variance problem~\eqref{optimization_problem} is feasible and has a linear constraint and a convex cost functional which is bounded below, it follows from the Lagrangian duality theorem~\cite{Luenberger1968} 
that the constrained Markowitz problem \eqref{optimization_problem} is equivalent to the following max-min problem: 

\begin{equation}
	\label{outer_inner_optimization_pb}
	V(m) \; = \; \max_{\zeta \in \R} \min_{\substack{\c \in \Acal}} \Big\{  \E\Big[\big| X^{\c}_T - (m-\zeta) \big|^2\Big] - \zeta^2 \Big\}. 
\end{equation}
This Lagrangian approach essentially moves the expectation constraint to the objective function of the optimization problem with the price to solve the additional outermost maximization problem.
Thus, solving problem \eqref{optimization_problem} involves two steps. First, the internal minimization problem in term of the Lagrange multiplier $\zeta$ has to be solved. Second, the optimal value of $\zeta$ for the external maximization problem has to be determined. Let us then introduce the inner optimization problem i.e., the following quadratic-loss minimization problem:
\begin{equation}
	\label{pb:P(c)}
	\tilde V(\xi) \;  := \;  \min_{\alpha \in \mathcal A}\E\Big[ \big| X^{\alpha}_T-\xi\big|^2\Big], \quad \xi \in \R. 
\end{equation}
where $\xi=m-\zeta$, for $\zeta$ $\in$ $\R$. Now, considering the inner Problem~\eqref{pb:P(c)} with an arbitrary \(\xi \in\R\) and defining $\Tilde{X}^{\alpha}_t = X_t^{\alpha} - \xi e^{-\int_t^T r(s) ds}$, for any $\alpha \in \mathcal{A}$, then, applying It\^o's lemma yields
\begin{equation}\label{eq:Tilwealth}
	d \Tilde{X}^{\alpha}_t =  \big( r(t) \Tilde{X}^{\alpha}_t  + \alpha_t^\T \lambda_t \big) dt + \alpha_t^\T dB_t, \quad 0 \leq t \leq T, \; 
	\Tilde{X}^{\alpha}_0  \; = \;   x_0 - \xi e^{-\int_0^T r(s) ds}. 
\end{equation}
As a result, $\Tilde{X}^{\c}$ and $X^{\c}$ have the same dynamics and $\Tilde{X}^{\c}_T=X^{\c}_T-\xi$ so  that problem \eqref{pb:P(c)} can be alternatively written as
\begin{equation}\label{pb:AlterP(c)}
	\tilde V(\xi) \;  := \;  \min_{\alpha \in \mathcal A}\E\Big[ \big| X^{\alpha}_T-\xi\big|^2\Big] = \min_{\alpha \in \mathcal A}\E\Big[ \big|\Tilde{X}^{\alpha}_T\big|^2\Big], \quad \Tilde{X}^{\alpha}_t = X_t^{\alpha} - \xi e^{-\int_t^T r(s) ds}, \quad \xi \in \R. 
\end{equation}

\medskip
\noindent To make a completion of squares inspired by \citet[Proposition 3.1]{LimZhou2002}, \citet[Proposition 3.3]{Lim2004}, \citet[Theorem 3.1]{ChiuWong2014} and \citet{Shen2015}, we need the auxiliary
process \(\Gamma\) as an additional stochastic factor in a place consistent with previous studies of Mean-Variance portfolios under semimartingales.
Heuristically speaking, the non-Markovian and non-semimartingale
characteristics of the inhomogeneous affine Volterra model with jumps are overcome by considering \(\Gamma\) whose construction is guided by the observations established in this section.
We consider the family of processes $(J^\alpha_t)_{t\in[0,T]}$, $\alpha\in\mathcal{A}$
    \begin{equation}\label{eq:ansatz}
		J^\alpha_t = \Gamma_t
		\left|\tilde{X}^\alpha_t\right|^2,\quad \text{where}\quad \Gamma_t = \exp(Y_t)\quad\text{for}\quad t\in[0,T]
	\end{equation}
	 and the triplet $(Y,\Lambda,U)$, solution under \(\P\), to the following backward stochastic differential equation with jumps (BSDEJ) and a driver \(f: [0,T] \times \R \times \R^d \times \R \to \R\):
\begin{equation}\label{eq:bsde_Y_Def}
	\left\{
	\begin{array}{ccl}
        dY_t &=&  \; -f(t,Y_t ,\Lambda_t, U_t)dt + \Lambda_t^\top dW_t + \int_E U_t(e) \tilde{N}(dt, de), \\
		Y_T &=&  0. 
	\end{array}  
	\right.
\end{equation}
It is worth noting that, BSDE theory has been developped extensively and enjoys profound applications in many areas, especially in finance (see e.g., 
\cite{oksendal2019applied} for the latest accounts of the theory and its applications). 

\begin{Proposition}\label{prop:preambule} Let \(p>1\) and assume that there exists a solution triplet $\left(Y, \Lambda, U\right) \in \mathbb{S}^{p}_{\F}([0,T],\R) \times L^2_{\F}([0,T], \R^d) \times L^2_{\F}([0, T],\tilde{N})$ to the Riccati BSDEJ
		\begin{equation}\label{eq:bsde_mv}
			\left\{
			\begin{array}{ccl}
				dY_t &=&   (-2r(t) +  \left| \lambda_t + \Sigma \Lambda_t \right|^2 - \frac{1}{2} |\Lambda_t|^2 - \int_E h(U_t(e) \pi(V_t,de))dt  + \Lambda_t^\top dW_t + \int_{E}U_t(e)\tilde{N}(dt,de) . \\
				Y_T &=&  0.
			\end{array}  
			\right.
		\end{equation}
        such that \(\exp(Y):=\Gamma \in \mathbb{S}^{\infty}_{\F}([0,T], \R^*_+)  \). Then, the family of processes $(J^\alpha_t)_{t\in[0,T]}$, $\alpha\in\mathcal{A}$ defined in~\eqref{eq:ansatz} is a submartingale.
        Furthermore, fix $\xi$ $\in$ $\R$, then, a candidate for the optimal feedback control $\c^*(\xi)$  of the inner minimization problem~\eqref{pb:AlterP(c)} is given for every \(t\in [0,T]\) by 
	\begin{equation}\label{eq:optimal_controlDeg_}
		\c^*_t(\xi) = -  \left(\lambda_t + \Sigma \Lambda_t\right) \big(X^{\c^*(\xi)}_t  - \xi e^{-\int_t^T r(s) ds}\big).
	\end{equation}
    Finally, under the $\P$-martingality assumption of Proposition~\ref{prop:Transform}, \(\Gamma\) admits the following representaton 
	\begin{equation}\label{eq:ito_gamma}
		\Gamma_t = \E \Big[ \exp\Big(\int_t^T \big(2r(s) - |\lambda_s + \Sigma \Lambda_s|^2\big) ds\Big) \mid \mathcal F_t\Big], \quad t\leq T,
	\end{equation}
\end{Proposition}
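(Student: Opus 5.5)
The plan is to compute the semimartingale decomposition of $J^\alpha=\Gamma\,|\tilde X^\alpha|^2$ with the It\^o--L\'evy product rule, show that its drift is a perfect square multiplied by $\Gamma$, and then pass from a local submartingale to a true one using the boundedness of $\Gamma$ and \eqref{eq:estimateX}. The representation of $\Gamma$ will follow directly from Proposition~\ref{prop:Transform}.

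\emph{Step 1: the driver and the dynamics of $\Gamma$.} Writing \eqref{eq:bsde_mv} in the form $dY_t=-f\,dt+\dots$ identifies the driver as
\[
f(t,Y,\Lambda,U)=2r(t)-|\lambda_t+\Sigma\Lambda_t|^2+\tfrac12|\Lambda_t|^2+\int_E h(U_t(e))\pi(V_t,de),
\]
where $\Sigma:=\diag(\rho_1,\ldots,\rho_d)$. By \eqref{eq:GammaDynamicsGeneral} of Proposition~\ref{prop:Transform}, the terms $\tfrac12|\Lambda|^2$ and $\int_E h(U)\pi$ cancel, so that
\[
\frac{d\Gamma_t}{\Gamma_{t-}}=\big(-2r(t)+|\lambda_t+\Sigma\Lambda_t|^2\big)dt+\Lambda_t^\top dW_t+\int_E\big(e^{U_t(e)}-1\big)\tilde N(dt,de).
\]
Under the martingality assumption, \eqref{eq:ito_gamma} of Proposition~\ref{prop:Transform} with $F\equiv 0$ gives the exponent $\int_t^T\big(f-\tfrac12|\Lambda_s|^2-\int_E h(U_s)\pi\big)ds=\int_t^T\big(2r(s)-|\lambda_s+\Sigma\Lambda_s|^2\big)ds$. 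This is exactly the claimed representation of $\Gamma_t$.

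\emph{Step 2: It\^o--L\'evy product rule.} The process $\tilde X^\alpha$ is continuous and, by \eqref{eq:Tilwealth},
\[
d|\tilde X^\alpha_t|^2=\big(2r(t)|\tilde X^\alpha_t|^2+2\tilde X^\alpha_t\alpha_t^\top\lambda_t+|\alpha_t|^2\big)dt+2\tilde X^\alpha_t\alpha_t^\top dB_t.
\]
Since $\tilde X^\alpha$ does not jump, the only covariation with $\Gamma$ is continuous. By \eqref{eq:correstructureheston}, $d\langle W^i,B^j\rangle_t=\rho_i\delta_{ij}\,dt$, hence
\[
d\big\langle \Gamma,|\tilde X^\alpha|^2\big\rangle_t=2\Gamma_t\tilde X^\alpha_t\,\alpha_t^\top\Sigma\Lambda_t\,dt.
\]
Collecting terms, the $\pm2r$ contributions cancel, and the drift of $J^\alpha$ becomes
\[
\Gamma_{t-}\Big(|\lambda_t+\Sigma\Lambda_t|^2|\tilde X^\alpha_t|^2+2\tilde X^\alpha_t\alpha_t^\top(\lambda_t+\Sigma\Lambda_t)+|\alpha_t|^2\Big)=\Gamma_{t-}\big|\alpha_t+(\lambda_t+\Sigma\Lambda_t)\tilde X^\alpha_t\big|^2\ \ge 0 .
\]
The remaining terms are stochastic integrals against $dW$, $dB$ and $\tilde N$. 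This shows that $J^\alpha$ is a local submartingale. The drift vanishes exactly when $\alpha_t=-(\lambda_t+\Sigma\Lambda_t)\tilde X^\alpha_t$, which, recalling $\tilde X_t=X_t-\xi e^{-\int_t^T r}$, is the candidate \eqref{eq:optimal_controlDeg_}.

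\emph{Step 3: from local to true submartingale.} This is the step I expect to be the main obstacle. I would take a localizing sequence $\tau_n\uparrow T$ for the local martingale part; its integrands involve $\Gamma_{t-}|\tilde X|^2\Lambda$, $\Gamma_{t-}\tilde X\alpha$ and $\Gamma_{t-}|\tilde X|^2(e^U-1)$. This gives $\E[J_{t\wedge\tau_n}\mid\mathcal F_s]\ge J_{s\wedge\tau_n}$. Since $\Gamma\in\mathbb S^\infty_{\F}$ is positive and bounded by some $C$, we have
\[
0\le J_{t\wedge\tau_n}\le C\sup_{u\le T}|\tilde X^\alpha_u|^2,
\]
and the right-hand side is integrable by \eqref{eq:estimateX}, because $\tilde X^\alpha$ differs from $X^\alpha$ by a bounded deterministic function. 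Dominated convergence for conditional expectations then yields $\E[J_t\mid\mathcal F_s]\ge J_s$, using that $J$ is c\`adl\`ag and $\tau_n\uparrow T$. The same bound also gives integrability of each $J_t$. The delicate point is to make sure that ``bounded'' in $\mathbb S^\infty$ is used as a uniform bound, rather than only an a.s.\ pathwise bound. If only pathwise boundedness is available, I would instead localize with $\sigma_n=\inf\{t:\Gamma_t\ge n\}$ and argue with Fatou's lemma in the lower direction where possible.
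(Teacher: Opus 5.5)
Your proposal is correct and follows the paper's proof. The steps match: It\^o's product rule for $J^\alpha=\Gamma|\tilde X^\alpha|^2$, completion of the square giving the drift $\Gamma_t|\alpha_t+(\lambda_t+\Sigma\Lambda_t)\tilde X^\alpha_t|^2\ge 0$, localization plus dominated convergence using the bound on $\Gamma$ and \eqref{eq:estimateX}, and Proposition~\ref{prop:Transform} with zero terminal condition for the representation of $\Gamma$; your explicit use of $d\langle W^i,B^j\rangle_t=\rho_i\delta_{ij}\,dt$ makes the origin of the $\Sigma\Lambda$ term clearer than the paper's display.

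As you suspected, passing to a true submartingale needs $\Gamma$ bounded uniformly in $\omega$. The paper relies on the same unstated use of $\mathbb{S}^{\infty}_{\F}$, which is harmless in the application since $0<\Gamma_t\le e^{2\int_t^T r(s)ds}$. Your Fatou fallback would not work, however: conditional Fatou only gives $\E[J_t\mid\mathcal F_s]\le\liminf_n\E[J_{t\wedge\sigma_n}\mid\mathcal F_s]$, which is the wrong direction for a submartingale.
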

\noindent {\bf Proof:}
In order to show that $(J^\alpha_t)_{t\in[0,T]}$, $\alpha\in\mathcal{A}$ defined in~\eqref{eq:ansatz} is a submartingale, we apply It\^o's product rule of Levy process on $J_t^{\alpha}$ combined with the property of \(Y\) in~\eqref{eq:bsde_Y_Def}, which yields
	\begin{align}
		dJ^\alpha_t =& \Gamma_t \Big[ \alpha^\T_t \alpha_t + 2 \tilde{X}_t\alpha^\T_t(\lambda_t + \Sigma \Lambda_t) + (\tilde{X}^\alpha_t)^2(2r(t) - f(t, \Lambda_t, U_t) + \int_Eh(U_t(e))\pi(V_t,de) + \frac{1}{2} |\Lambda_t|^2)\Big] dt \nonumber\\&+ 2 \Gamma_t \tilde{X}^\alpha_t \alpha^\T_tdB_t + \Gamma_t (\tilde{X}^\alpha_t)^2dW_t  +\Gamma_{t-}(\tilde{X}^\alpha_{t-})^2 \int_E (\exp(U_t(e))-1)\tilde{N}(dt,de).\label{eq:dynamics_J}
	\end{align}
    where $f$  is the driver of the BSDEJ \((Y, \Lambda, U)\). 
In these terms we are bound to choose a function \(f\) for which $(J^\alpha_t)_{t\in[0,T]}$ is a submartingale for all \(\alpha\in \mathcal A\), while \eqref{eq:optimal_controlDeg_} defines the corresponding candidate optimal feedback control.
	\noindent To ease notations, we set $h_t := \lambda_t + \Sigma \Lambda_t $. Choosing \(f\) as in Equation~\eqref{eq:bsde_mv}, for any admissible strategy $\alpha \in \mathcal A$, Equation~\eqref{eq:dynamics_J} and a completion of squares in $\alpha$ yield:
	\begin{align*}
		&\,dJ^\alpha_t \, =  \big| \Tilde{X}_t^{\c} \big|^2 \Gamma_t \Big[ \big(-2r(t)  +\left| \lambda_t + \Sigma \Lambda_t \right|^2 \big)dt +\Lambda_t^\top dW_t \Big] + \Gamma_t \Big(2 \Tilde{X}_t^{\c} \big(r(t) \Tilde{X}_t^{\c} +  \alpha_t^\T \lambda_t\big) + \c_t^\T \c_t  \Big) dt  \\
		&\qquad \qquad\;+ 2\Gamma_t \Tilde{X}_t^{\c} \c_t^\T dB_t + 2 \c^\T_t \left( \Sigma \Lambda_t\right) \Tilde{X}_t^{\c} dt +\Gamma_{t-}(\tilde{X}^\alpha_{t-})^2 \int_E (e^{U_t(e)}-1)\tilde{N}(dt,de)\\
		& \;= \big( \c_t + h_t \Tilde{X}_t^{\c} \big)^\T \Gamma_t \big( \c_t + h_t\Tilde{X}_t^{\c} \big) dt + 2\Gamma_t \Tilde{X}_t^{\c} \c_t^\T dB_t + \Gamma_t \big| \Tilde{X}_t^{\c} \big|^2 \Lambda_t^\T dW_t +\Gamma_{t-}(\tilde{X}^\alpha_{t-})^2 \int_E (e^{U_t(e)}-1)\tilde{N}(dt,de).
	\end{align*}
	As a consequence,  {using $\Gamma_T=1$}, we get 
	\begin{align}
		\big| \Tilde{X}_T^{\c} \big|^2 =& \Gamma_0 \big| \Tilde{X}_0^{\c} \big|^2 + \int_0^T \big( \c_s + h_s \Tilde{X}_s^{\c} \big)^\T  \Gamma_s \big( \c_s + h_s \Tilde{X}_s^{\c} \big) ds + 2\int_0^T \Gamma_s \Tilde{X}_s^{\c} \c_s^\T dB_s +  2 \int_0^T \Gamma_s \big| \Tilde{X}_s^{\c} \big|^2 \Lambda_s^\T dW_s\nonumber\\
        &\qquad\qquad+\int_0^\cdot\int_E \Gamma_{s-}(\tilde{X}^\alpha_{s-})^2
		\bigl(\exp(U_s(e))-1\bigr)\,\tilde{N}(ds,de).\label{eq:ansatz_int}
	\end{align}
    Notice that, by assumption,  $\Tilde{X}^{\alpha}$ has $\p$-$\as$ continuous paths and is bounded ($X^{\c}$ satisfies \eqref{eq:estimateX}), 
    \begin{equation*}
		(\alpha,\Lambda,U)\in
		L^{2,\mathrm{loc}}_{\mathbb{F}}([0,T];\mathbb{R}^d)
		\times
		L^2_{\mathbb{F}}([0,T];\mathbb{R}^d)
		\times
		L^2_{\mathbb{F}}([0,T];\tilde{N}),
	\end{equation*}
	and $\Gamma$ is bounded on $[0,T]$ ($\Gamma \in \mathbb{S}^{\infty}_{\F}([0,T], \R)$). 
    Consequently, the integrand of the stochastic integral with respect to the Brownian motion is locally square-integrable under \(\P\) and the stochastic integrals
	\begin{equation*}
		\int_0^\cdot 2\Gamma_s\tilde{X}^\alpha_s\,\alpha_s^\top dB_s,
		\qquad
		\int_0^\cdot \Gamma_s(\tilde{X}^\alpha_s)^2\,dW_s,
		\qquad
		\int_0^\cdot\int_E \Gamma_{s-}(\tilde{X}^\alpha_{s-})^2
		\bigl(\exp(U_s(e))-1\bigr)\,\tilde{N}(ds,de)
	\end{equation*}
	are well-defined.
    Moreover, these integrals are  $(\mathbb{F},\mathbb{P})$-local martingales. By the definition of $f$ in~\eqref{eq:bsde_mv}, the drift of
	$J^\alpha$ is non-negative, and by a localization argument combined with
	the dominated convergence theorem, using the admissibility of $\alpha$,
	$J^\alpha$ is a submartingale. 
	
	\medskip
	\noindent Let $\{\tau_k\}_{k\geq 1}$ be a common localizing increasing sequence of stopping times  such that $\tau_k \uparrow T$ when $ k \rightarrow \infty$. The local martingales stopped by $\{ \tau_k \}_{k \geq1}$ are true $(\mathbb{F},\mathbb{P})$-martingales. Consequently, Equation~\eqref{eq:ansatz_int} integrated from \(0\) to \(T \wedge \tau_k\) and then taking expectations on both sides, yields
	\begin{equation}
		\E \Big[ \big| \Tilde{X}_{T \wedge \tau_k}^{\c} \big|^2 \Big] \;= \;  \Gamma_0 \big| \Tilde{X}_0^{\c} \big|^2 
		+ \E \Big[ \int_0^{T \wedge \tau_k} \big( \c_s + h_s \Tilde{X}_s^{\c} \big)^\T  \Gamma_s \big( \c_s + h_s \Tilde{X}_s^{\c} \big) ds \Big]. 
	\end{equation}
	Since $\alpha$ is admissible ($\alpha \in \mathcal A$), $X^{\c}$ satisfies \eqref{eq:estimateX}, and therefore $\E\left[\sup_{t\leq T}  |\Tilde X^{\c}_t|^2\right]<\infty$. In particular, for every \(k\), $| \Tilde{X}_{T \wedge \tau_k}^{\c}|^2$ is dominated by a non-negative integrable random variable. Sending $k$ to infinity, an application of the dominated convergence theorem on the left term combined with the monotone convergence theorem on the right term, recall that $\Gamma$ is strictly positive (\(\Gamma \in \mathbb{S}^{\infty}_{\F}([0,T], \R^*_+)\)), yields, as $k \to \infty$,
	\begin{equation}\label{Eq:Square}
		\E \Big[ \big| \Tilde{X}_{T}^{\c} \big|^2 \Big] \; = \; \Gamma_0 \big| \Tilde{X}_0^{\c} \big|^2 
		+ \E \Big[ \int_0^{T} \big( \c_s +  h_s \Tilde{X}_s^{\c} \big)^\T  \Gamma_s  \big( \c_s + h_s \Tilde{X}_s^{\c} \big) ds \Big].
	\end{equation}
	Therefore, since $\Gamma_s$ is positive definite for any $s \leq T$ ($\Gamma_s>0$), the cost functional~\eqref{pb:AlterP(c)} is minimized when \(\c_s =- h_s \Tilde{X}_s^{\c}\) for every \(s\in[0,T]\). Consequently, we obtain that a candidate for the optimal control $\c^*(\xi)$ of the inner minimization problem~\eqref{pb:AlterP(c)} is given by 
\begin{align}
	\c^*_t(\xi) &= -h_t \Tilde{X}_t^{\c^*(\xi)} =-  \left(\lambda_t + \Sigma \Lambda_t\right) \left(X^ {\c^*(\xi)}_t  - \xi e^{-\int_t^T r(s) ds}\right), \quad 0 \leq t \leq T.
\end{align}
and the triplet \((Y,\Lambda, U)\) satisfies the BSDEJ~\eqref{eq:bsde_mv}. This yields~\eqref{eq:optimal_controlDeg_}.
	Furthermore, an application of Proposition~\ref{prop:Transform} ( It\^o formula to $\Gamma = \exp(Y)$) and noting that $Y_T=0$ yields
	\begin{equation}\label{eq:GammaDef2}
		\left\{
		\begin{array}{ccl}
			d\Gamma_t &=&  \;  \Gamma_t \Big[ \big(-2r(t)  +\left| \lambda_t + \Sigma \Lambda_t \right|^2 \big)dt +\Lambda_t^\top dW_t + \int_{E}\big(e^{U_t(e)}-1\big)\widetilde{N}(dt,de)\Big], \\
			\Gamma_T &=&  1. 
		\end{array}  
		\right.
	\end{equation}
    together with the Laplace transform representation~\eqref{eq:ito_gamma} owing to the $\P$-martingality assumption of the stochastic exponential \(M\) defined for every $0 \leq t \leq T$ by :
	\begin{align}\label{eq:localMart_M2} 
		M_t &:= \; \mathcal E\Big( \int_0^t \Lambda_s^\top dW_s + \int_0^t\int_{E}\big(e^{U_s(e)}-1\big)\tilde{N}(ds,de) \Big) = \;  \Gamma_t \exp\Big(\int_0^t\big( 2r(s) -\left| \lambda_s + \Sigma \Lambda_s \right|^2\big) ds\Big).
	\end{align}
	This complete the proof. \hfill $\Box$

\medskip
\noindent {\bf Remark:} The inner optimization problem~\eqref{pb:AlterP(c)} (and thus the mean-variance problem~\eqref{outer_inner_optimization_pb}) then boils down to proving the existence and uniqueness of solutions to Equation~\eqref{eq:GammaDef2}, or equivalently Equation~\eqref{eq:bsde_mv}. This problem corresponds to a specific setting of the theory developed in Section~\ref{sect:Main_solBDSEJ}, which consequently yields explicit expressions for the vector \(\Lambda\) and the scalar \(U\).

\subsection{Optimal strategy in the general multivariate correlation structure: A verification argument}\label{Verif_result}

\noindent In order to avoid restrictions on the correlation structure arising from the martingale distortion approach used earlier in Section~\ref{subsect:Intuition_Brownian}, which may be specific to the one-dimensional case $(d = 1)$ (see e.g.,~\cite{HanWong2020a}) and to certain degenerate
multivariate settings such as $(\Sigma = \diag(\rho,\ldots,\rho))$, we instead employ a verification argument in the spirit of~\cite[Theorem 3.1]{Gnabeyeu2026b,dro2026optimal} to solve the Markowitz optimization problem with the martingale optimality principle. 

\medskip
\noindent
Let $\Lambda$ and $U$ be defined as 
\begin{equation}\label{eq:LambdaU}
	\Lambda_t :=  \sqrt{\diag(V_t)}\,\varsigma^\top(t)\, \sigma^v \,\psi(T-t) 
    \quad \text{and}\quad
    U_t(e) :=\psi^\top(T-t) \eta(e), 
    \quad e\in E, \quad 0 \leq t \leq T.
\end{equation}
We will work under the following assumption,
\begin{assumption}\label{assm:gen}
	Assume that there exists a solution
	$\psi \in C([0,T],\mathbb{R}^d)$ to the above-mentioned inhomogeneous Riccati--Volterra equation satisfying the below appropriate
	boundedness condition i.e. such that 
	\begin{equation}\label{eq:condtheta}
		\max_{1 \leq i \leq d} \Big[ \sup_{t \in [0,T]} \left( \theta_i^2 + (\sigma^v_i)^2 \varsigma^i(t)^2 \psi^i(T-t)^2 + \int_E|U_t(e)|\,\nu_i(de)\right)  \Big]\leq \frac{a}{a(p)},
	\end{equation}
	holds for a sufficient large $p > 1$, where
     the constant $a(p)$ is given by 
	{  \begin{equation}a(p)=\max \Big[p \left(2 + |\Sigma| \right),   {2 (8p^2 {- 2p}) \left( 1  + |{\Sigma}|^2  \right)}, { p \left( 1  + |{\Sigma}|^2  \right)}, \,{ 4 p } \Big]. \label{eq:constap}
	\end{equation}}
	and the constant $a>0$ is such that $\E\left[\exp\big(a\int_0^T \sum_{i=1}^d V^i_s ds\big)\right] < \infty$.
\end{assumption}
	
	\noindent {\bf Remark on Assumption~\ref{assm:gen}:} 
    \noindent 1. Since $\psi$ is continuous on the compact interval $[0,T]$, it is bounded. Hence, for \(U\) in~\eqref{eq:LambdaU}
we have $|U_t(e)|\leq \|\psi\|_{\infty,T}\|\eta(e)\|.$
Consequently, for every $i=0,\ldots,d$ as $\nu_i(E)<\infty$ by the finiteness of $\nu_i$ and $\int_E\|\eta(e)\|^2\,\nu_i(de)<\infty$ owing to assumption~\ref{assump:kernelJumpVolterra}~$(ii)$, then $\sup_{t\in[0,T]}\int_E|U_t(e)|^2\,\nu_i(de)<\infty$
and by the Cauchy--Schwarz inequality $\beta_i:=\sup_{t\in[0,T]}\int_E|U_t(e)|\,\nu_i(de)<\infty.$
    That is the constants \(\beta_i\), $i = 1, \ldots, d$ are well defined and finite.
    
  \noindent  2.  
Note that if Assumption~\ref{assm:gen} hold, then 
\begin{equation}\label{eq:assumption_novikov}
		\E \Big[ \exp\Big( a(p)\int_0^T \big(  |\lambda_s|^2 + \left|\Lambda_s\right|^2 + V_s^\top \int_E|U_s(e)|\,\nu(de)\big)ds \Big) \Big] \;< \;  \infty,,
	\end{equation}
	holds for some $p > 1$ and a constant $a(p)$ given by~\eqref{eq:constap}.
	\noindent In fact, under Assumption~\ref{assm:gen}, we will have 
	\begin{align}\label{assum_for_some_p}
		&a(p)\left( |\lambda_s|^2 + \left|\Lambda_s\right|^2 + V_s^\top \int_E|U_s(e)|\,\nu(de) \right) \; \nonumber \\&\hspace{3cm}= \; a(p) \sum_{i=1}^d  \left( \theta_i^2 + (\sigma^v_i)^2 \varsigma^i(s)^2\psi^i(T-s)^2 + \int_E|U_s(e)|\,\nu_i(de) \right)\,V_s^i \; \leq \;  a \sum_{i=1}^d V_s^i,
	\end{align} 
	which implies that $\E \left[ \exp\Big(  a(p) \int_0^T \Big( |\lambda_s|^2 + \left|\Lambda_s\right|^2 + V_s^\top \int_E|U_s(e)|\,\nu(de) \Big)ds\Big) \right]< \infty$, thanks to the exponential-affine representation in~\cite[Theorem A.1.]{dro2026optimal} ( with \(\mathcal{M} \ni m(\dd s) := a{\bold{1}_d}\,\delta_0(\dd s) \)).

	\smallskip
	\noindent 3. Note that condition~\eqref{eq:condtheta}--\eqref{assum_for_some_p} on $\Lambda$ and $U$ is particularly useful for establishing the martingale property of~\eqref{eq:localMart_M} in order to have Laplace functional for the affine Volterra processes~\eqref{eq:volterra2}--~\eqref{VolSqrt_} (see also proof of Lemma~5.1 in \cite{dro2026optimal}).

    \smallskip
\noindent 4. We will show that the solution $\psi$ to the Riccati--Volterra equation associated with the solution of the BSDEJ~\eqref{eq:bsde_mv} satisfies $\psi\in\mathbb{R}^d_-$, so that $U$ is non-positive on
$[0,T]\times E$ (see~\eqref{eq:LambdaU}). Consequently, in contrast to the exponential integrability condition imposed on $U$ in~\cite[Assumption 3.1]{dro2026optimal}, no additional integrability condition on  $U$ is required in the present setting to ensure that the function $h$ in~\eqref{eq:bsde_mv} is $\pi(x,de)$-integrable for every $x\in\mathbb{R}^d_+$.
Indeed, this follows from the bound $0\leq e^u-u-1\leq\frac{u^2}{2},\; u\leq0,$ together with the fact that $U_t\in L^2_{\nu_k}(E,\mathbb{R})$ for every $t \in [0,T]$ and every $k=0,\ldots,d$ owing to condition~\eqref{eq:IntJumpSize}.

 
	\medskip
	\noindent{\bf Remark:} 
	Condition \eqref{eq:condtheta} concerns the risk premium constants  $(\theta_1,\ldots, \theta_d)$. For a large enough constant $a>0$, from ~\cite[Theorem A.1.]{dro2026optimal} ( with \(\mathcal{M} \ni m(\dd s) := a{\bold{1}_d}\,\delta_0(\dd s) \)),   a sufficient condition ensuring that
	$\E\big[\exp\big(a\int_0^T \sum_{i=1}^d V^i_s ds\big)\big]<\infty$ is the existence of a continuous solution $\tilde{\psi}$  on $[0,T]$ to the inhomogeneous Riccati--Volterra equation \(\forall t \in [0,T]\) and $i=1,\ldots,d$,
	\begin{align} \label{eq:Riccati}
		\tilde{\psi}^i(t) &= \; \int_0^t K_i(t-s) \Big(a+ \big(D^\T\tilde{\psi}(s)\big)_i + \frac{(\sigma^v_i)^2}{2}(\varsigma^i(T-s)\tilde{\psi}^i(s)) ^2\Big) ds  + \int_{E}h\big(\psi(T-s)^\top\eta(e)\big)\nu_i(de). 
	\end{align} 
	
	\medskip
	\noindent We start by the below proposition establishing that, for some \(p>1\), the triplet $\left(Y,\Lambda, U\right)$  is a $\mathbb{S}^{p}_{\F}([0,T], \R) \times L^2_{\F}([0,T], \R^d)\times L^2_{\F}([0,T], \tilde{N})$-valued solution to a Riccati BSDE with jumps provided solvability in \(C([0,T],(\R^d)^*)\) of an inhomogeneous Ricatti-Volterra equation with L\'evy exponential jump compensator.
	\begin{Proposition} 
		\label{prop:MV_riccati} 
		Assume that there exists  a solution $\psi \in C([0,T],\R^d)$ to the inhomogeneous Riccati-Volterra equation with L\'evy jumps (exponential jump compensator) for $i = 1,\ldots,d,$:
		\begin{align}
			\psi^i(t) &= \int_0^t K_i(t-s)
			\Bigl(-\theta_i^2 + F_i(T-s,\psi(s))\Bigr)\,ds,
			\quad 
			\label{eq:Riccatipsi1}\\[6pt]
			F_i(s,\psi) &= -2\theta_i\rho_i\sigma^v_i\,\varsigma^i(s)\psi^i
			+ (D^\top\psi)_i
			+ \frac{(\sigma^v_i)^2}{2}(1-2\rho_i^2)\,(\varsigma^i(s)\psi^i)^2
			+ \int_{E} h\!\left(\psi^\top\eta(e)\right)\nu_i(de).
			\label{eq:Riccatipsi2}
		\end{align}
		Let $\left(Y, \Lambda, U\right)$ be defined as 
		\begin{equation}\label{eq:Solbsde_mv}
			\left\{
			\begin{array}{ccl}
				Y_t &=&  \ \int_t^T \Big( 2r(s) + \int_{E}h\big(\psi(T-s)^\top\eta(e))\nu_0(de)+\sum_{i=1}^d \big( -\theta_i^2  +  F_i(s,\psi(T-s))\big)  g^i_t(s) \Big)ds , \\
				\Lambda_t^i &=&  \sigma^v_i\varsigma^i(t) \psi^i(T-t) \sqrt{V^i_t}, \quad U_t(e) = \psi^\top(T-t) \eta(e),\quad  i=1,\ldots,d, \quad 0 \leq t \leq T, 
			\end{array}  
			\right.
		\end{equation}
		where $g$ $=$ $(g^1,\ldots,g^d)^\T$ is given by \eqref{eq:processg} i.e. the $\R^d$-valued process \((g_t(s))_{t\leq s}\)  is defined in~\eqref{eq:processg}.  Then for some $p >1 $,  $\left(Y, \Lambda, U \right)$  is a  $\mathbb{S}^{p}_{\F}([0,T], \R) \times L^2_{\F}([0,T], \R^d) \times L^2_{\F}([0, T],\tilde{N})$-valued solution to the Riccati BSDEJ~\eqref{eq:bsde_mv}.
		Furthermore the process $\Gamma = \exp(Y)$ satisty, $0<\Gamma_t\leq e^{2 \int_t^T r(s) ds}$ $\P-a.s.$ for all $t\leq T$ so that  $\Gamma \in \mathbb{S}^{\infty}_{\F}([0,T], \R) $
		and if moreover $g^i_0(0)>0$ for some $i\leq d$, then  {$0<\Gamma_0 < e^{2 \int_0^T r(s) ds}$} with 
		\begin{equation}\label{eq:Gamma0}
        \begin{aligned}
			\Gamma_0 &=\exp\Big( 2 \int_0^T r(s) ds + \int_0^T \big( \int_{E}h\big(\psi(T-s)^\top\eta(e)\big)\nu_0(de)\\
            &\qquad+ \sum_{i=1}^d \big[V_0^i  \big(-\theta_i^2  + F_i(s,\psi(T-s))\big) \varphi^i(s) ds +  \psi^i(T-s) \mu^i(s)\big] \big) ds \Big).
        \end{aligned}
		\end{equation}
	\end{Proposition}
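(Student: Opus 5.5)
The plan is to read the Riccati BSDEJ~\eqref{eq:bsde_mv} as a particular case of the general BSDEJ~\eqref{eq:BSDEJ_general}, with $X=V$, Brownian driver $W$, $\sigma(t)=\sigma^v\varsigma(t)$, $\Theta=0$ and $\Xi=0$. The existence part then follows from Theorem~\ref{Thm_BDSEJ_Gen}; the bounds on $\Gamma$ are handled afterwards.

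\emph{Matching the driver.} Since $\lambda_t=\diag(\theta)\sqrt{V_t}$ and $\Sigma=\diag(\rho)$, expanding $-|\lambda_t+\Sigma\Lambda_t|^2+\frac12|\Lambda_t|^2+\int_E h(U_t(e))\pi(V_t,de)+2r(t)$ gives a generator of the form~\eqref{eq:Driver_General} with the following choices:
\begin{itemize}
\item $c_{zz}^i=\frac12-\rho_i^2$, $c^i_{z\sqrt{x}}=-2\theta_i\rho_i$, $c_x^i=-\theta_i^2$, $c=2r$;
\item $c_y\equiv g_y\equiv g_x\equiv g_{z\sqrt{x}}\equiv 0$;
\item $g_\nu=g=h$ (as in Section~\ref{subsect:Intuition_jumps}).
\end{itemize}
With $\bar c_y\equiv0$ all exponential factors in~\eqref{eq:Funct_General} equal one. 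Then $G_i(s,\psi)=-\theta_i^2+F_i(s,\psi)$, where the quadratic term $c_{zz}^i(\sigma^v_i\varsigma^i\psi^i)^2=\frac{(\sigma^v_i)^2}{2}(1-2\rho_i^2)(\varsigma^i\psi^i)^2$ is exactly~\eqref{eq:Riccatipsi2}. Hence~\eqref{eq:RiccatiGeneral} coincides with~\eqref{eq:Riccatipsi1}. The function $\phi$ of~\eqref{eq:phi_ODE} becomes $\int_t^T(2r(s)+\int_E h(\psi(T-s)^\top\eta(e))\nu_0(de))\,ds$, and~\eqref{eq:BSDEJ_solGen} reduces to~\eqref{eq:Solbsde_mv}.

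Rather than only citing Theorem~\ref{Thm_BDSEJ_Gen}, which assumes uniqueness of $\psi$, I would rerun its verification directly. Apply Itô to $Y_t=\int_t^T G(s,\psi(T-s))^\top g_t(s)\,ds+\phi(t)$, using $dg_t(s)=K(s-t)\,dZ_t$, stochastic Fubini, $g_t(t)=V_t$ and the Riccati identity $\int_t^T G(s,\psi(T-s))^\top K(s-t)\,ds=\psi(T-t)^\top$. This identifies the martingale integrands as $\Lambda$ and $U$, and the drift as $-f$. Membership of $\Lambda$ in $L^2$ follows from~\eqref{eq:SupMoment_X} and the boundedness of $\psi$. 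Membership of $U$ in $L^2(\tilde N)$ follows from~\eqref{eq:IntJumpSize}.

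\emph{Bounds on $\Gamma$.} For the upper bound $\Gamma_t\le e^{2\int_t^T r}$, I would avoid sign arguments on $g_t(s)$, which need not be positive once jumps and drift corrections enter. Instead I would use the Laplace representation~\eqref{eq:ito_gamma} of Proposition~\ref{prop:preambule}: $\Gamma_t=\E[\exp(\int_t^T(2r-|\lambda+\Sigma\Lambda|^2))\mid\cF_t]$. This requires the stochastic exponential~\eqref{eq:localMart_M2} to be a true martingale, which I would derive from Assumption~\ref{assm:gen} via~\eqref{eq:assumption_novikov}. Jumps of the stochastic exponential are $e^{U}-1>-1$, so positivity holds. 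A Lépingle--Mémin type criterion, or the argument of \cite[Lemma 5.1]{dro2026optimal}, combining the continuous Novikov part with the bound on $V^\top\int_E|U|\,d\nu$, gives martingality. The representation then yields $0<\Gamma_t\le e^{2\int_t^T r}$ directly, hence $\Gamma\in\mathbb S^\infty$. The same representation shows $Y\in\mathbb S^p$: $Y$ is bounded above, and $-Y$ is controlled through Jensen by $\E[\int|\lambda+\Sigma\Lambda|^2\mid\cF_t]$, whose $\sup_t$ has finite $p$-th moment by Doob's inequality and the exponential moments.

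\emph{Strict inequality and $\Gamma_0$.} For the strict inequality at $0$, if $g_0^i(0)>0$ then $\theta_i^2V^i$ or $\Lambda^i$ is nonzero on a set of positive $\P\otimes dt$ measure, so the exponent in the representation is strictly below $2\int_0^T r$ with positive probability. Formula~\eqref{eq:Gamma0} is obtained by evaluating $Y_0$ with $g_0(s)=\varphi(s)V_0+\int_0^sK(s-u)\mu(u)\,du$. Fubini turns the $\mu$-term into $\int_0^T\psi(T-s)^\top\mu(s)\,ds$, again via the Riccati equation.

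The main obstacle will be the martingale property of the mixed Brownian--jump stochastic exponential under the unbounded coefficients; the rest is bookkeeping inherited from Theorem~\ref{Thm_BDSEJ_Gen}.
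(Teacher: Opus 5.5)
Your route is the paper's. You identify \eqref{eq:bsde_mv} with the case $\Theta=0$, $\Xi=0$, $c_y\equiv g_y\equiv g_x\equiv g_{z\sqrt{x}}\equiv0$, $g=g_\nu=h$ of Theorem~\ref{Thm_BDSEJ_Gen}. You get $0<\Gamma_t\le e^{2\int_t^Tr(s)ds}$ from the Laplace representation of Proposition~\ref{prop:preambule}, once the stochastic exponential \eqref{eq:localMart_M2} is a true martingale. The paper cites \cite[Lemma~5.1]{dro2026optimal} for that martingale property and uses \eqref{eq:assumption_novikov} in its $\mathbb{S}^p$ step, so it too relies on exponential moments that are not among the proposition's hypotheses. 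Finally, \eqref{eq:Gamma0} follows from Fubini and $\int_u^TK_i(s-u)\bigl(-\theta_i^2+F_i(s,\psi(T-s))\bigr)ds=\psi^i(T-u)$.

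Two of your deviations are genuine improvements:
\begin{itemize}
\item Rerunning the verification avoids the uniqueness of $\psi$, which Theorem~\ref{Thm_BDSEJ_Gen} and the paper's proof invoke but the proposition does not assume.
\item Your two-sided bound $2\int_t^Tr(s)ds-\E\bigl[\int_t^T|\lambda_s+\Sigma\Lambda_s|^2ds\mid\cF_t\bigr]\le Y_t\le 2\int_t^Tr(s)ds$ (Jensen below, the bound on $\Gamma$ above) is cleaner than the paper's $\mathbb{S}^p$ step. The paper bounds $Y$ only from above and then applies Doob to $|Y|$. Since $|\lambda_s+\Sigma\Lambda_s|^2\le C|V_s|$, your bound needs only the polynomial moments \eqref{eq:SupMoment_X}.
\end{itemize}
One caution on martingality. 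In a L\'epingle--M\'emin condition you must control the jump term $\int_E\bigl(U_t(e)e^{U_t(e)}-e^{U_t(e)}+1\bigr)\pi(V_t,de)$. Bounding it by $\int_E|U_t(e)|\,\pi(V_t,de)$ uses $U\le0$, i.e.\ $\psi\le0$ when $\eta\ge0$. For large positive $U$ you would instead need exponential integrability of $\psi^\top\eta$, and the proposition assumes neither.

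The step that fails as written is the strict inequality $\Gamma_0<e^{2\int_0^Tr(s)ds}$. The exponent in the representation is $\int_0^T\bigl(2r(s)-|\lambda_s+\Sigma\Lambda_s|^2\bigr)ds$, where $|\lambda_s+\Sigma\Lambda_s|^2=\sum_{i=1}^d\bigl(\theta_i+\rho_i\sigma^v_i\varsigma^i(s)\psi^i(T-s)\bigr)^2V^i_s$. Knowing that $\theta_i^2V^i$ or $\Lambda^i$ is nonzero does not suffice, because $\theta_i\sqrt{V^i}$ and $\rho_i\Lambda^i$ may cancel.

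Moreover, $g^i_0(0)>0$ alone cannot give the claim. If $\theta\equiv0$, then $\psi\equiv0$ solves \eqref{eq:Riccatipsi1}--\eqref{eq:Riccatipsi2} because $h(0)=0$. Hence $\lambda\equiv\Lambda\equiv0$, $U\equiv0$, $Y_t=2\int_t^Tr(s)ds$ and $\Gamma_0=e^{2\int_0^Tr(s)ds}$.

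What works is to use $\psi(0)=0$ and the boundedness of $\varsigma^i$. The coefficient $\theta_i+\rho_i\sigma^v_i\varsigma^i(s)\psi^i(T-s)$ tends to $\theta_i$ as $s\uparrow T$. So suppose $\theta_i>0$ and, for every $\varepsilon>0$, $V^i$ is not $\P\otimes ds$-a.e.\ zero on $\Omega\times[T-\varepsilon,T]$. Then $\int_0^T|\lambda_s+\Sigma\Lambda_s|^2ds>0$ with positive probability, and $\Gamma_0=e^{2\int_0^Tr(s)ds}\,\E\bigl[e^{-\int_0^T|\lambda_s+\Sigma\Lambda_s|^2ds}\bigr]<e^{2\int_0^Tr(s)ds}$. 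These conditions must be added to the hypotheses. The paper's one-line justification (``$V^i$ is continuous and positive'') glosses over the same point.
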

	\noindent The following remark makes precise the existence of a continuous solution to the Riccati-Volterra equation with L\'evy jumps \eqref{eq:Riccatipsi1}-\eqref{eq:Riccatipsi2}.
    
	\medskip
	\noindent{\bf Remarks:} 1. In a one-dimensional example $d=1$ and $\nu_0\equiv0$ together with $\varsigma\equiv I$,~\cite[section~$5$]{BondiLivieriPulido2024} establishes that, there exists a continuous global solution $\psi \in C(\R_+,\R_-)$ to  the Ricatti--Volterra Equation ~\eqref{eq:Riccatipsi1}--~\eqref{eq:Riccatipsi2}.
	
	\smallskip
	\noindent 
	2. In the case, where we take $\nu_i\equiv0$ for every $i=1,\cdots,d$ and $K$ satisfies the Assumption~\ref{assump:kernelJumpVolterra}~$(i)$, Theorem~\ref{Thm:Riccatilocalexistence} provides the existence of a unique global continuous solution $\psi$ to the inhomogeneous Ricatti--Volterra Equation ~\eqref{eq:Riccatipsi1}--~\eqref{eq:Riccatipsi2}. More specifically,
	\begin{itemize}
		\item  If {\textit{\(1-2\rho_i^2 < 0\)}}, then 
		Theorem~\ref{Thm:Riccatilocalexistence}~$(b)$ guarantees that there exists a unique non-continuable continuous solution \((\psi, T_{\max})\) to Equation~\eqref{eq:Riccatipsi1}--~\eqref{eq:Riccatipsi2} with  \(\psi_i \le 0\) for \(i=1,\ldots,d\), in the sense that $\psi$ satisfies~\eqref{eq:Riccatipsi1}--~\eqref{eq:Riccatipsi2} on $[0,T_{max})$ with $T_{max} \in (0,T]$ and $\sup_{t<T_{max}}|\psi^i( t)| = +\infty$, if $T_{max}<T$. 
		
		\item If {\textit{\(1-2 \rho_i^2 \geq 0\)}}, then Theorem~\ref{Thm:Riccatilocalexistence} establishes the existence of a unique global continuous solution $\psi \in C([0,T],\R^d)$ to ~\eqref{eq:Riccatipsi1}--~\eqref{eq:Riccatipsi2} such that \(\psi^i(t)< 0\) for every \(t>0\) and \(i=1,\cdots,d.\)
	\end{itemize}

	\smallskip
	\noindent 3. Furthermore, for our numerical case studies, in Section~\ref{Sec:Num}, note as in~\cite{Gnabeyeu2026b} that, if the matrix $D$ in the drift of the volatility process is a diagonal matrix, i.e. $D=-\diag{(\lambda_1,\dots, \lambda_d)}$, for \(i=1,\ldots,d,\) by Theorem~\ref{Thm:Riccatilocalexistence}~$(c)$, since $-\theta_i^2<0$, $\psi^i \in C([0,T],\R_-)$ is unique global solution to the following Volterra equation  (here, for short $\bar{D}_i(s):=\lambda_i+2\theta_i \rho_i \sigma^v_i \varsigma^i(T-s)$). 
	\begin{equation}\label{eq:comp}
		\chi(t) = \int_0^t K_i(t-s)  \Big( -\theta_i^2 - \big(\lambda_i + 2 \theta_i \rho_i \sigma^v_i \varsigma^i(T-s)\big) \chi(s)  + \frac {(\sigma^v_i)^2} 2  ( 1- 2\rho_i^2)\varsigma^i(T-s)^2\chi(s) ^2\Big)ds, \; t\leq T.
	\end{equation}
	Combining the component-wise solutions, we finally obtain the unique global solution $\psi$ of the inhomogeneous Ricatti--Volterra Equation~\eqref{eq:Riccatipsi1}--~\eqref{eq:Riccatipsi2}. 
	
	\smallskip
	\noindent Moreover, it follows in this case that the condition \eqref{eq:condtheta} can be made more explicit by bounding $\psi$ with respect to the vector $\theta$.
	Indeed setting for \(i=1,\ldots,d,\)  \(\bar{\lambda}_i:=\inf_{t\in [0,T]} \big(\lambda_i+2\sigma^v_i\rho_i\theta_i\varsigma^i(t)\big)= \lambda_i+2\sigma^v_i\rho_i\theta_i \|\varsigma^i\|_{\infty} {\bold{1}_{\rho_i \leq 0}}, \) (assuming boundedness for the function \(\varsigma\)) and assuming that \(\bar{\lambda}_i \neq 0\), by Corollary~\ref{Corol:Riccatilocalexistence} we have:
	\begin{equation}
		\sup_{t \in [0,T]} |\psi^i(t)| \leq \frac{|\theta_i |^2}{\bar{\lambda}_i}\int_0^T f_{\bar{\lambda}_i}(s)ds = \frac{|\theta_i |^2}{\bar{\lambda}_i}(1- R_{\bar{\lambda}_i}(T)), \;i=1,\ldots,d.
	\end{equation}
	where $R_{\bar{\lambda}_i}$ is the \textit{ $\bar{\lambda}_i$-resolvent} associated to the real-valued kernel $K_i$ and $f_{\bar{\lambda}_i}$ its antiderivative (see e.g.,~\cite[Section 2.2]{EGnabeyeu2025}).
	Consequently, combining those component-wise estimates, we finally obtain that, a sufficient condition on $\theta$ to ensure \eqref{eq:condtheta} would be for all $i=1,\ldots,d$ 
    \begin{equation}
		\theta^2_i\left( 1+(\sigma^v_i\|\varsigma^i\|_\infty\frac{\theta_i}{\bar{\lambda}_i})^2\big(1- R_{\bar{\lambda}_i}(T)\big)^2 + \frac{1}{\bar{\lambda}_i}(1- R_{\bar{\lambda}_i}(T)) \sqrt{\nu_i(E)}
\Big(\int_E \|\eta(e)\|^2\,\nu_i(de)\Big)^{\frac12}\right) \leq \frac{a}{a(p)}.
	\end{equation}

\medskip
\noindent First, we provide a verification result for the inner optimization problem \eqref{pb:P(c)} via the standard completion of squares technique, see  for instance \citet[Proposition 3.1]{LimZhou2002}, \citet[Proposition 3.3]{Lim2004} and \citet[Theorem 3.1]{ChiuWong2014}. 

\smallskip
\noindent
 In the following theorem, we propose a candidate optimal control $\c^*(\xi)$, we prove its optimality and establish its admissibility and the integrability of the corresponding state process $X^{\c^*(\xi)}$ for any $\xi$ $\in$ $\R$.

\begin{Theorem}
	\label{thm:inner}
    Assume there exists a solution triplet $(Y, \Lambda, U) \in \mathbb{S}^{p}_{\F}([0,T], \R_) \times L^2_{\F}([0,T], \R^d) \times L^2_{\F}([0, T],\tilde{N})$ to the equation \eqref{eq:bsde_mv} such that $\exp(Y_t)>0$, for all $t\leq T$ and \eqref{assum_for_some_p} holds for some $p \geq 1$ and a constant $a(p)$ given by \eqref{eq:constap}.
	Fix $\xi$ $\in$ $\R$, 	Then, the  inner minimization problem \eqref{pb:P(c)} admits an optimal feedback control  $\c^*(\xi)$ satisfying
	\begin{align}
		\label{eq:optimal_control}
		\c^*_t(\xi) &= -  \left(\lambda_t + \Sigma \Lambda_t\right) \left(X^{\c^*(\xi)}_t  - \xi e^{-\int_t^T r(s) ds}\right), \quad 0 \leq t \leq T \\
		&= \;  \Big(- \big(\theta_i + \rho_i\sigma^v_i\varsigma^i(t) \psi^i(T-t) \big) \sqrt{V_t^i} \big(X^{\c^*(\xi)}_t - \xi e^{-\int_t^T r(s) ds}\big)   \Big)_{1 \leq i \leq d}, \quad 0 \leq t \leq T.
	\end{align}
	Moreover, the control $\c^*(\xi)$ is  unique under a given solution \((S, V, B, B^\top)\) to ~\eqref{VolSqrt_}-~\eqref{eq:hestonS} and is admissible with
	\begin{equation}\label{eq:boundX}
		\E\Big[ \sup_{ t \in [0, T]} |X^{\c^*(\xi)}_t|^p \Big] < \infty,\quad \text{for some sufficiently large}\quad p\geq1.
	\end{equation}
	 The optimal value for the minimization problem~\eqref{pb:P(c)} or the associated optimal cost is 
	\begin{equation}
		\label{eq:optimal_vamue_inner}
		\tilde V(\xi) \; = \;    \Gamma_0 \left|x_0 - \xi e^{-\int_0^T r(s) ds} \right|^2.
	\end{equation}
\end{Theorem}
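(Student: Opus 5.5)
The plan is a verification argument built on the completion-of-squares identity already obtained in Proposition~\ref{prop:preambule}. For any admissible $\alpha\in\mathcal A$ that identity gives \eqref{Eq:Square}:
\[
\E\big[|\tilde X^\alpha_T|^2\big]=\Gamma_0|\tilde X^\alpha_0|^2+\E\Big[\int_0^T(\alpha_s+h_s\tilde X^\alpha_s)^\top\Gamma_s(\alpha_s+h_s\tilde X^\alpha_s)\,ds\Big],
\]
with $h=\lambda+\Sigma\Lambda$ and $\Gamma>0$. Since the integral term is nonnegative, this yields the lower bound $\tilde V(\xi)\ge \Gamma_0|x_0-\xi e^{-\int_0^Tr(s)ds}|^2$. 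It also shows that any admissible control reaching this bound must satisfy $\alpha_s=-h_s\tilde X^\alpha_s$, $ds\otimes d\P$-a.e. This gives uniqueness, because the linear SDE driven by that feedback has a unique strong solution for the fixed weak solution $(S,V,B,B^\perp)$. It therefore remains to show that the feedback control $\alpha^*(\xi)$ in \eqref{eq:optimal_control} is admissible, i.e.\ that it belongs to $\mathcal A$ and satisfies \eqref{eq:boundX}. Equality then follows, and hence \eqref{eq:optimal_vamue_inner}. The componentwise form of $\alpha^*$ comes from substituting $\lambda^i=\theta_i\sqrt{V^i}$, $\Lambda^i=\sigma^v_i\varsigma^i\psi^i(T-\cdot)\sqrt{V^i}$ (from \eqref{eq:LambdaU}) and $\Sigma=\diag(\rho_i)$.

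For admissibility, I would solve the closed-loop equation explicitly. Plugging $\alpha^*=-h\tilde X$ into \eqref{eq:Tilwealth} gives a linear SDE
\[
d\tilde X_t=\tilde X_t\big[(r(t)-h_t^\top\lambda_t)\,dt-h_t^\top dB_t\big],
\]
whose solution is
\[
\tilde X_t=\tilde X_0\exp\Big(\int_0^t\big(r(s)-h_s^\top\lambda_s-\tfrac12|h_s|^2\big)\,ds-\int_0^t h_s^\top dB_s\Big).
\]
This solution is continuous and unique. It is well defined because $\int_0^T|h_s|^2ds<\infty$ a.s., since $V\in L^1$ pathwise. Then $X^{\alpha^*}=\tilde X+\xi e^{-\int_\cdot^Tr}$, and $\alpha^*\in L^{2,loc}_\F$.

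The main obstacle is the moment bound \eqref{eq:boundX}. Set $|\tilde X_t|^p=|\tilde X_0|^p\,e^{p\int_0^tr}\,\mathcal E(-2p\,h\cdot B)_t^{1/2}\exp\big(\int_0^t c_p(s)\,ds\big)$, where $c_p$ is a quadratic expression in $h,\lambda$ with coefficients of order $p^2$. I would bound $|h_s|^2\le 2(1+|\Sigma|^2)(|\lambda_s|^2+|\Lambda_s|^2)$ and $|h_s^\top\lambda_s|\le(1+|\Sigma|)(|\lambda_s|^2+|\Lambda_s|^2)$. Cauchy--Schwarz separates the stochastic exponential, whose expectation is at most one, from the exponential of the time integral. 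That integral is dominated by $\exp\big(a(p)\int_0^T(|\lambda|^2+|\Lambda|^2)\,ds\big)$, which is integrable by \eqref{eq:assumption_novikov}/\eqref{assum_for_some_p}; the constants $8p^2-2p$ and $1+|\Sigma|^2$ in $a(p)$ are exactly what this step needs. To control the supremum in time rather than fixed-$t$ moments, I would use the martingale in the factorization together with Doob's (or BDG's) inequality applied at exponent $2p$ instead of $p$. This is the reason for the factor $4p$ and the requirement of a sufficiently large $p$ in $a(p)$. Taking $p\ge2$ gives \eqref{eq:estimateX}, so $\alpha^*\in\mathcal A$. Finally, substituting $\alpha^*$ into \eqref{Eq:Square}, whose derivation needed only admissibility, makes the integral term vanish and gives the optimal value.
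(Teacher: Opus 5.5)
Your proposal is correct and follows essentially the paper's proof. The identity \eqref{Eq:Square} gives the lower bound, uniqueness and the optimal value; the closed-loop linear SDE is solved explicitly; and the moment bound comes from Doob's inequality in $L^{2p}$ for $\mathcal E(-h\cdot B)$, plus Cauchy--Schwarz against $\mathcal E(-4p\,h\cdot B)$ and \eqref{eq:assumption_novikov}. The paper differs only in minor ways: it splits the product by AM--GM rather than Cauchy--Schwarz, phrases Step 1 via the martingale optimality principle, and additionally checks $\alpha^*\in L^2_{\F}$ via H\"older.

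One correction for the supremum step: factor $|\tilde X_t|^p$ as $|\tilde X_0|^p e^{p\int_0^t (r-h^\top\lambda)\,ds}\,\mathcal E(-h\cdot B)_t^p$, as the paper does. Your factorization through $\mathcal E(-2p\,h\cdot B)^{1/2}$ followed by Cauchy--Schwarz would leave you with $\E\big[\sup_{t\le T}\mathcal E(-2p\,h\cdot B)_t\big]$, which Doob's inequality does not control.
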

\noindent For the sake of brevity, yet without compromising self-containment, we present a sketch of the proof, deferring the detailed proof to Section~\ref{sect:proofMresult}.

\smallskip
\noindent \emph{Sketch of Proof:} In contrast to~\cite{AbiJaberMillerPham2021}, we use a verification method of the martingale optimality principle
	(see for e.g.\ \cite{dro2026optimal}, \cite{kharroubi2013mean}, or its first
	use in \cite{HuImkellerMueller2005}). 
In order to prove that $\alpha^*$ in~\eqref{eq:optimal_control} is indeed the optimal portfolio strategy for the inner problem, we show that the
	family of processes $(J^\alpha_t)_{t\in[0,T]}$, $\alpha\in\mathcal{A}$, defined in~\eqref{eq:ansatz}
	 where the triplet $(Y,\Lambda,U)$ is solution to the BSDEJ~\eqref{eq:bsde_mv}, satisfies the following conditions:
	\begin{enumerate}
		\item $J_T^\alpha = |X_T^\alpha - \xi|^2$
		for all $\alpha \in \mathcal{A}$;
		
		\item $J_0^\alpha$ is constant and independent of $\alpha \in \mathcal{A}$;
		
		\item $J^\alpha$ is a submartingale for all $\alpha \in \mathcal{A}$, 
		and for $\alpha^* \in \mathcal{A}$ in~\eqref{eq:optimal_control},
		$J^{\alpha^*}$ is a martingale.
	\end{enumerate}
    
\medskip
\noindent Finally, using Theorem~\ref{thm:inner}, we can now provide the explicit solution
for the optimal investment strategy 
of the Markowitz problem~\eqref{optimization_problem} in the multivariate fake stationary Volterra Heston model \eqref{VolSqrt_}-\eqref{eq:hestonS}. More specifically, combining the above Theorem~\ref{thm:inner}, we deduce the solution for the outer optimization problem \eqref{optimization_problem} under a non-degeneracy condition on the solution $\Gamma$ to the exponential change of variable of the quadratic BSDEJ~\eqref{eq:bsde_mv},  yielding Theorem~\ref{Thm:OuterMarkowitz} below.

\smallskip
\noindent
In the following theorem, we show that $\c^*$ in~\eqref{eq:optimal_control} is optimal for the outer optimization problem~\eqref{optimization_problem} for some optimal $\xi^* \in \mathbb{R}$, and we derive the corresponding efficient frontier.

\begin{Theorem}\label{Thm:OuterMarkowitz}
	Assume that there exists  a solution $\psi \in C([0,T],\R^d)$ to the Riccati-Volterra equation with L\'evy jumps \eqref{eq:Riccatipsi1}-\eqref{eq:Riccatipsi2} such that Assumption~\ref{assm:gen} is in force. 
	Assume that $g^i_0(0)>0$ for some $i\leq d$. Then, the optimal investment 
	strategy for the Markowitz problem or  maximization problem \eqref{optimization_problem} in the   multivariate fake stationary Volterra Heston model \eqref{VolSqrt_}-\eqref{eq:hestonS} is given by the admissible control
	\begin{align}
		\label{eq:optimal_control_Heston}
		\c^*_t &= (\c^{*i}_t)_{1\leq i\leq d} = -  \left(\lambda_t + \Sigma \Lambda_t\right) \left(X^{\c^*}_t - \xi^* e^{-\int_t^T r(s) ds}\right), \quad 0 \leq t \leq T \\
		&= \;  \Big(- \big(\theta_i + \rho_i\sigma^v_i\varsigma^i(t) \psi^i(T-t) \big) \sqrt{V_t^i} \big(X^{\c^*}_t - \xi^* e^{-\int_t^T r(s) ds}\big)   \Big)_{1 \leq i \leq d}, \quad 0 \leq t \leq T.\label{eq:optimal_control_Heston2}
	\end{align}
	where $\xi^*:=m-\zeta^*$ with \(m\) the expected terminal wealth is defined as:
	\begin{equation}
		\label{eq:eta_star}
		\xi^* \; = \;  \frac{ m - \Gamma_0 e^{-\int_0^T r(s) ds} x_0}{1- \Gamma_0 e^{-2 \int_0^T r(s) ds}}\quad \text{with}\quad 
		\zeta^* \; = \;  \frac{\Gamma_0 e^{-\int_0^T r_s ds} \Big( x_0 - m e^{-\int_0^T r(s) ds } \Big)}{1-\Gamma_0 e^{-2\int_0^T r(s) ds} }.
	\end{equation}
	and the wealth process $X^*$ $=$ $X^{\alpha^*}$ by \eqref{eq:wealth}--\eqref{eq:wealthProcess} with $\lambda=\big(\theta_1 \sqrt{V^1},$ $\ldots, \theta_d \sqrt{V^d} \big)^\top$ and satisfies
	\begin{equation}\label{eq:boundX^*}
		\E\Big[ \sup_{ t \in [0, T]} |X^{\c^*}_t|^p \Big] < \infty,\quad \text{for some sufficiently large}\quad p\geq1.
	\end{equation}
	Moreover, ~\eqref{eq:optimal_control_Heston} is unique under a given solution \((S, V, B, B^\top)\) to ~\eqref{VolSqrt_}-~\eqref{eq:hestonS}.
	The optimal value of \eqref{optimization_problem} for the optimal wealth process $X^*=X^{\alpha^*}$ or the variance of \(X^{\alpha^*}_T\) is given below with $\Gamma_0$ as in equations~\eqref{eq:Gamma0}.  
	\begin{equation}
		\label{eq:value_final}
		V(m) \; = \;    \V (X_T^{\alpha^*}) \; = \;  \Gamma_0 \frac{\big(x_0 - m e^{-\int_0^T r(s) ds} \big)^2}{1 - \Gamma_0 e^{-2\int_0^T r(s) ds}}.
	\end{equation}   
\end{Theorem}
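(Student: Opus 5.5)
The plan is to reduce the outer problem to the inner one via the Lagrangian max--min representation \eqref{outer_inner_optimization_pb}, and then optimize explicitly over the multiplier. First, Proposition~\ref{prop:MV_riccati} together with Assumption~\ref{assm:gen} provides a solution $(Y,\Lambda,U)$ of the Riccati BSDEJ \eqref{eq:bsde_mv}. It also gives $0<\Gamma_t\le e^{2\int_t^T r(s)ds}$ and, since $g^i_0(0)>0$ for some $i$, the strict inequality $\Gamma_0<e^{2\int_0^T r(s)ds}$. Next, the remark following Assumption~\ref{assm:gen} shows that \eqref{eq:assumption_novikov}--\eqref{assum_for_some_p} hold, so all hypotheses of Theorem~\ref{thm:inner} are in force. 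Applying that theorem for every $\xi\in\R$, the inner problem has the optimal feedback control $\c^*(\xi)$ in \eqref{eq:optimal_control}, which is admissible, satisfies the moment bound \eqref{eq:boundX}, and has value $\tilde V(\xi)=\Gamma_0\,|x_0-\xi e^{-\int_0^T r(s)ds}|^2$.

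Second, write $\xi=m-\zeta$ and set $\kappa:=e^{-\int_0^T r(s)ds}$. The max--min problem becomes $V(m)=\max_{\zeta\in\R}\{\Gamma_0(x_0-(m-\zeta)\kappa)^2-\zeta^2\}$. This is a quadratic in $\zeta$ with leading coefficient $\Gamma_0\kappa^2-1<0$ by the strict bound on $\Gamma_0$, so it is strictly concave and has a unique maximizer. Setting the derivative to zero gives $\Gamma_0\kappa(x_0-m\kappa+\zeta\kappa)=\zeta$, hence $\zeta^*=\Gamma_0\kappa(x_0-m\kappa)/(1-\Gamma_0\kappa^2)$. From this, $\xi^*=m-\zeta^*=(m-\Gamma_0\kappa x_0)/(1-\Gamma_0\kappa^2)$, which is \eqref{eq:eta_star}. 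Substituting back, one uses $x_0-\xi^*\kappa=(x_0-m\kappa)/(1-\Gamma_0\kappa^2)$ and $\zeta^*=\Gamma_0\kappa(x_0-\xi^*\kappa)$. A short simplification then yields \eqref{eq:value_final}: $V(m)=\Gamma_0(x_0-m\kappa)^2/(1-\Gamma_0\kappa^2)$.

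Third, I must check that $\c^*:=\c^*(\xi^*)$ is feasible, i.e. that $\E[X^{\c^*}_T]=m$, and that it actually attains $V(m)$. Feasibility of the constraint I would get by optimality in $\zeta$. By construction $\c^*(\xi^*)$ minimizes $\E[|X_T^\c-\xi^*|^2]$, and the first-order condition in $\zeta$ combined with the envelope argument gives $\E[X^{\c^*}_T]-\xi^*=\zeta^*$, i.e. $\E[X^{\c^*}_T]=m$.

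A more concrete route to the same fact is to compute $\E[\tilde X_T]$ directly. Under $\c^*$, the identity \eqref{Eq:Square} holds with the integral term equal to zero. Moreover, $\Gamma_t\tilde X^{\c^*}_t\kappa_t^{-1}$ (a discounted version) is a martingale, which gives $\E[\tilde X^{\c^*}_T]=\Gamma_0\kappa\,\tilde X_0$. Together with $\tilde X_0=x_0-\xi^*\kappa$, this returns exactly $\zeta^*$.

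With feasibility in hand, $\V(X_T^{\c^*})=\E[|X_T^{\c^*}-\xi^*|^2]-(\zeta^*)^2$, which equals the max--min value, so $\c^*$ is optimal. The expression \eqref{eq:optimal_control_Heston2} then follows by substituting $\lambda^i=\theta_i\sqrt{V^i}$ and $\Lambda^i=\sigma^v_i\varsigma^i\psi^i(T-\cdot)\sqrt{V^i}$ from \eqref{eq:LambdaU}. Uniqueness and the bound \eqref{eq:boundX^*} are inherited from Theorem~\ref{thm:inner} with $\xi=\xi^*$, together with strict concavity in $\zeta$.

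The main obstacle is the rigorous justification of the martingale identity behind $\E[X_T^{\c^*}]=m$, since it relies on a true-martingale property under unbounded coefficients. I would establish it first, by applying Itô's formula to the product $\Gamma_t\tilde X_t$ under $\c^*$: the drift cancels thanks to the BSDEJ \eqref{eq:bsde_mv}. The resulting local martingale would then be controlled using the $L^p$ bound \eqref{eq:boundX}, the boundedness of $\Gamma$, and the exponential moment \eqref{eq:assumption_novikov}, via Burkholder--Davis--Gundy and Hölder inequalities.
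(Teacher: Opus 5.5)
Your proposal is correct and follows the paper's route. You reduce via the Lagrangian max--min to the inner problem solved by Theorem~\ref{thm:inner}. You then maximize the quadratic $J(\zeta)=\Gamma_0\bigl(x_0-(m-\zeta)e^{-\int_0^T r(s)ds}\bigr)^2-\zeta^2$, which is strictly concave because $\Gamma_0<e^{2\int_0^T r(s)ds}$, and this yields $\zeta^*$, $\xi^*$ and $V(m)$. Your explicit check that $\mathbb{E}[X_T^{\alpha^*}]=m$, via the discounted martingale $e^{\int_0^t r(s)ds}\Gamma_t\tilde X^{\alpha^*}_t$, is correct. The paper leaves this step implicit in its appeal to the duality theorem, so it is a worthwhile addition.
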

\noindent Note that Theorem~\ref{Thm:OuterMarkowitz} above extends \cite[Theorem 4.2]{HanWong2020a}, \cite[Theorem 4.4]{AbiJaberMillerPham2021} to the multivariate, time-dependent diffusion coefficient case. 

\medskip
\noindent {\bf Proof of Theorem~\ref{Thm:OuterMarkowitz}:}
By assumption and owing to Proposition~\ref{prop:MV_riccati}, there exists a solution triplet $ \big(\Gamma:=\exp(Y), \Lambda, U\big)\in { \mathbb{S}^{\infty}_{\F}([0,T], \R) \times L^2_{\F}([0,T], \R^d) \times L^2_{\F}([0, T],\tilde{N})}$ solution to the  Equation~\eqref{eq:GammaDef2}  under the specification \eqref{eq:Solbsde_mv}. 
Consequently, under Assumption~\ref{assm:gen}, Theorem~\ref{thm:inner} gives that the candidate for the optimal feedback control for the inner problem~\eqref{pb:P(c)} is defined in  \eqref{eq:optimal_control} so that the max-min problem~\eqref{outer_inner_optimization_pb} (which is equivalent to the Markowitz problem \eqref{optimization_problem}) is equivalent to (thanks to~\eqref{Eq:Square})
\begin{equation}
	\label{outer_inner_optimization_pb_}
	\max_{\zeta \in \R}  J(\zeta), \quad \mbox{ with } \; J(\zeta) \; = \;  \Gamma_0 \big|x_0 - (m-\zeta) e^{-\int_0^T r(s) ds} \big|^2  - \zeta^2. \\
\end{equation}
The first and second order derivatives are
\begin{equation*}
	\frac{\partial J}{\partial \zeta} =  2\Gamma_0 \big[x_0 - (m - \zeta) e^{- \int^T_0 r(s) ds} \big] e^{- \int^T_0 r(s) ds} - 2 \zeta \quad\text{and}\quad
	\frac{\partial^2 J}{\partial \zeta^2} =  \Gamma_0 e^{- 2 \int^T_0 r(s) ds} - 2 < 0,
\end{equation*}
where we have used the strict inequality $\Gamma_0 <  e^{2 \int_0^T r(s) ds}$, by the last claim of Proposition~\ref{prop:MV_riccati}. This ensures that the quadratic function $J$ is
strictly concave. This yields that  the maximum is achieved 
from the first-order condition $\frac{\partial J}{\partial \zeta} (\zeta^*)$ $=$ $0$, which gives 
\begin{align*}
	\zeta^* &= \;  \frac{\Gamma_0 e^{-\int_0^T r(s) ds} \Big( x_0 - m e^{-\int_0^T r(s) ds } \Big)}{1 - \Gamma_0 e^{-2\int_0^T r(s) ds}},
\end{align*}
and thus $\xi^*$ $=$ $m-\zeta^*$ is given by  \eqref{eq:eta_star}.  We conclude that the optimal control is equal to  $\alpha^*=\alpha^*(\xi^*)$ as in \eqref{eq:optimal_control_Heston}, and by 
\eqref{outer_inner_optimization_pb}, the optimal value or variance of terminal wealth \(\V (X_T^*)\) of \eqref{optimization_problem} is obtained by direct simplification with $\zeta^*$ thanks to~\eqref{Eq:Square}--~\eqref{eq:optimal_vamue_inner} and is equal to:
\begin{align*}
	V(m)=\tilde V(\xi^*) - (\zeta^*)^2 =   \Gamma_0 \left|x_0 - \xi^* e^{-\int_0^T r(s) ds} \right|^2-(\zeta^*)^2 = \Gamma_0 \frac{\big|x_0 - m e^{-\int_0^T r(s) ds} \big|^2}{1 - \Gamma_0 e^{-2\int_0^T r(s) ds}}
\end{align*}
where we used~\eqref{eq:eta_star} and simple calculation  to obtain the last equality. This give \eqref{eq:value_final} and complete the proof. \hfill $\Box$

\medskip\noindent {\bf Remark:}
\smallskip
\noindent { 1.} \(\Gamma_0\) is the initial value of the process \(t \mapsto \Gamma_t\) solution to a BSDEJ~\eqref{eq:GammaDef2} of Riccati type which appears to have an explicit closed form formula $\Gamma:=\exp(Y)$ with $Y$ in~\eqref{eq:Solbsde_mv} or equivalently~\eqref{eq:Gamma0}.

 \smallskip
 \noindent { 2.}  The efficient frontier of the mean-variance portfolio selection problem~\eqref{optimization_problem} is given by Equation~\eqref{eq:value_final}.
\noindent In fact, Equation \eqref{eq:value_final} gives the relationship between the expected terminal wealth \(m\) and variance of the terminal wealth \(\sigma^2:=V(m)\) of efficient portfolios. The efficient frontier is thus the curve of \eqref{eq:value_final} depicted on the plane \((\sigma,m)\) of mean and standard deviation. Taking the square root to both sides of \eqref{eq:value_final}, and accounting for the feasibility of the MV problem, the efficient frontier reads.
\begin{equation}\label{eq:effFront}
	m=x_0e^{\int_0^T r(s) ds} + a\, \sigma, \quad \text{with}\quad a:=\sqrt{\frac{e^{2\int_0^T r(s) ds}}{\Gamma_0}-1}
\end{equation}
It is still a straight line (also termed the \textit{capital market line}); see also \citet{ZhouLi2000,LimZhou2002,ChiuWong2014}. Its slope, called the \textit{price of risk} writes owing to equations~\eqref{eq:Gamma0},
\begin{align*} 
a^2 &\,= \exp\Big(  -\int_0^T \int_{E}h\big(\psi(T-s)^\top\eta(e)\big)\nu_0(de)\, ds + \sum_{i=1}^d\int_0^T  \big(\theta_i^2 - F_i(s,\psi(T-s))\big) g^i_0(s) ds \Big)-1\\
	&= \exp\Big( \int_0^T \big( -\int_{E}h\big(\psi(T-s)^\top\eta(e)\big)\nu_0(de)+ \sum_{i=1}^d \big[V_0^i  \big(\theta_i^2  - F_i(s,\psi(T-s))\big) \varphi^i(s) ds\\
    &\hspace{2cm}- \psi^i(T-s) \mu^i(s)\big] \big) ds \Big) - 1
\end{align*}

\section{Numerical experiments: A $2D$ fake stationary rough Heston model}\label{Sec:Num} 
	\noindent We consider a financial market consisting of one risk-free asset and \(d = 2\) risky assets, with an investment horizon of \(T = 1\) year. In this case, there is no jumps part, $\eta\equiv0$ and we set $\nu_i\equiv0$ for every $i\in\{0,1,2\}$.  
To model the roughness of the asset price dynamics, we employ an appropriate integration kernel. We choose a fractional kernel of Remark~\ref{rm:Kernels} of the form:
\[
K(t) = \begin{pmatrix}
	\frac{t^{\alpha_1-1}}{\Gamma(\alpha_1)} & 0 \\
	0 & \frac{t^{\alpha_2-1}}{\Gamma(\alpha_2)}
\end{pmatrix}, \quad 0.1 +\frac12=\alpha_1,\, 0.4 +\frac12=\alpha_2 \in \big( \frac{1}{2}, 1 \big).
\]
Here, \( \Gamma(\alpha) \) is the Gamma function, and the parameter \( \alpha \) controls the degree of roughness in the model. 
Note that, the model is sufficiently rich to capture several well-known stylized facts of financial markets:

\begin{itemize}[label=\(\bullet\)]
	\item Each asset \(S^i\), \(i=1,2\) exhibits stochastic rough volatility driven by the process \(V^i\), with different Hurst indices \(\alpha_i\).
	We can even assume a corellation between \(B^1\) and \(B^2\) through \(\rho \in (-1,1)\).
	\item Each stock \(S^i\) is correlated with its own volatility process through the parameter \(\rho_i\) to take into
	account the leverage effect.
\end{itemize}
For the two-dimensional fake stationary rough Heston volatility model described in~\cite[Section 2.1]{Gnabeyeu2026b}, we consider the setting in which the simplified specification \(\varphi(t) = I_{2\times2}, \; t \ge 0, \)
is adopted in Equation~\eqref{VolSqrt_}.  Consequently, the \(\R^2_-\)-valued mean-reverting function \(\mu\) is constant in time, that is, \(\mu(t) = \mu_0 \in \R^2_+, \; \forall\,t \ge 0,\) (see, e.g.,~\cite{EGnabeyeu2025}).
Let \(D:=\diag(\lambda_1,\lambda_2)\) together with
    $\varsigma^i (t) :=\varsigma^i_{\alpha_i,\lambda_i,c_i}(t)
		\ge 0$ given as the unique solution to the functional convolution equation (see~\cite{EGnabeyeu2025}) for every $i\in\{1,2\}$:
		\begin{equation}\label{eq:VolterraStabilizer2}
			\textit{($E_{\alpha_i, \lambda_i, c_i}$)}: \;\forall\, t\ge 0, \; c_i \lambda^2_i\Big(1-\big(1 -\int_0^t f_{\alpha_i,\lambda_i}(s)\,ds\big)^2 \Big) =  \int_0^t f_{\alpha_i,\lambda_i}^2(t-s) \varsigma_{\alpha_i,\lambda_i,c_i}^2(s)\,ds, \;  \textit{with} \; c_i >0.
		\end{equation}
        Here, \(f_{\alpha_i,\lambda_i}\) called the \textit{Mittag--Leffler  probability density} is the resolvent of the second kind of \(\lambda_i K_i\), defined on $(0,+\infty)$ by $f_{\alpha_i, \lambda_i}(t) = \alpha_i\lambda_i t^{\alpha_i-1} E'_{\alpha_i}(-\lambda_i t^{\alpha_i})$ where 
        $E_{\alpha}$ denotes the \textit{ standard   Mittag-Leffler function}.
	The function \(\varsigma:=\varsigma_{\alpha,D,c}\) is a stabilysing function design to ensure constant mean $m_0:=D^{-1}\mu_0$ and covariance $\Omega:=\diag((\sigma^v_1)^2c_1\frac{\mu_0^1}{\lambda_1},(\sigma^v_2)^2c_2\frac{\mu_0^2}{\lambda_2})$ for the Volatility process \(V\) in the spirit of~\cite{EGnabeyeu2025,EGnabeyeuPR2025,Gnabeyeu2026b} and admit a power series expansion whose coefficients are defined inductively.
    
\medskip
\noindent {\bf Remark:}
In order to numerically implement the optimal strategy \eqref{eq:optimal_control_Heston2}, one needs to simulate  the non-Markovian process $(V,S)$ in Equation~\eqref{VolSqrt_}--~\eqref{eq:stocks} and to discretize the Riccati-Volterra equation  for $\psi$ in~\eqref{eq:Riccatipsi1}--~\eqref{eq:Riccatipsi2}. 	

\subsection{About the numerical scheme for the Volterra and fractional Riccati equations}
\noindent To simulate the Volterra process~\eqref{VolSqrt_}--~\eqref{eq:hestonS}, we use the $K$-integrated discrete time Euler-Maruyama scheme defined below on the time grid $t_k =t^n_k =\frac{kT}{n}, k=0, \dots, n$ and in the fractional kernel case, namely for \(i=1,2\), $\;\overline V^{i,n}_{0}=V^{i,0}$ and for every $k=1,\ldots,n$,  
\begin{align}\label{eq:EulerXdisc}
	\overline V^{i,n}_{t_{k}} 
	&= V^{i,0} +  \sum_{\ell=1}^{k} \Big(\big(\mu_0^i -\lambda_i\overline{V}_{t_{\ell-1}}^{i,n} \big) \int_{t_{\ell-1}}^{t_{\ell}} K_{\alpha_i}(t_{k} -s)ds   +  
    \sigma^v_i\varsigma^i(t_\ell)\sqrt{ \overline{V}_{t_{\ell-1}}^{i,n}} \int_{t_{\ell-1}}^{t_{\ell}} K_{\alpha_i}(t_{k} -s) \, dW_s \Big).
\end{align}
\smallskip
\noindent  The discretization involves both deterministic and stochastic integrals. Accordingly,  for each \(i=1,2\), let $C^i=(C_{k\ell}^i)_{k,\ell=1:n}$ denote the $n\times n$ lower triangular matrix associated with the deterministic integrals, and $G^i= (G_{k\ell}^i)_{ k=1:n+1, \ell=1: n}$ the  $(n+1)\times n$ matrix with entries the random terms $\int_{t_{\ell-1}}^{t_{\ell}}K_{\alpha_i}(t_{k}-s)dW_s^i$.
\begin{align*}
&(C_{k\ell}^i)_{k,\ell=1:n} 
= \left(
\int_{t_{\ell-1}}^{t_\ell} K_{\alpha_i}(t_k - s)\,ds \,\mathbf{1}_{\{1 \le \ell \le k \le n\}}
\right), \,(G_{k\ell}^i)_{ k=1:n+1}^{\ell=1: n}= \left(
\begin{array}{l}
\int_{t_{\ell-1}}^{t_\ell} K_{\alpha_i}(t_k - s)\,dW_s^i \,\mathbf{1}_{\{1 \le \ell \le k \le n\}} \\[0.3em]
\Delta W_{t_\ell}^i, \quad k = n+1,\; \ell=1:n
\end{array}
\right),
\end{align*}
The last line in \(G^i\) line has been introduced in order to be able to perform a consistent joint simulation of $\bar V^{n}$ and the Euler-Maruyama scheme of the Markovian stock price process $S$ in~\eqref{eq:stocks}--\eqref{eq:hestonS} and  
wealth process $X$ in~\eqref{eq:Tilwealth} depending on $\bar V^{n}$ and $W$ (see e.g. \cite{GnabeyeuPages2026} for further details), given recursively by
\begin{align}\label{eq:xi_bar^N}
	\overline{X}^{\alpha,n}_{t_{k}}&=\overline{X}^{\alpha,n}_{t_{k-1}}+\big(r(t_{k})\overline{X}^{\alpha,n}_{t_{k-1}}+\alpha_{t_{k-1}}^\top \lambda_{t_{k-1}} \big)\frac{T}{n} +\alpha_{t_{k-1}}^\top \big(\Sigma^\top  \Delta W_{t_k} - \sqrt{I-\Sigma^\T \Sigma} \Delta W^{\perp}_{t_k}\big)\\  
	&=\overline{X}^{\alpha,n}_{t_{k-1}}+\big(r(t_{k})\overline{X}^{\alpha,n}_{t_{k-1}}+\sum_{i=1}^{d=2}\theta_i\pi^i_{t_{k-1}}V^i_{t_{k-1}} \big)\frac{T}{n} +\sum_{i=1}^{d=2}\big(\rho_i  \Delta W^i_{t_k} - \sqrt{1-\rho_i^2} \Delta W^{i,\perp}_{t_k}\big)\pi^i_{t_{k-1}}\sqrt{V^i_{t_{k-1}}}.\nonumber
\end{align}
Here, we deduce in particular that from the correlation structure~\eqref{eq:correstructureheston}, \(B_t
=
\Sigma^\top  W_t
-
\sqrt{I-\Sigma^\top \Sigma}\, W_t^{\perp}\) with \(W\perp\!\!\!\perp W^\top\) two  independent $d$-dimensional Brownian motions.

\smallskip
\noindent 
Then the following relation holds for \(i=1,2\): $\;\overline V^{i,n}_{0}=V^{i,0}$ and for every $k=1,\dots,n$
\begin{equation}\label{eq:VectSimul}
\big( \overline V^{i,n}_{t_{k}} \big)_{k=1:n}= V^{i,0}\mbox{\bf 1}_n +C_{k,\cdot}^i\big(\mu_0^i -\lambda_i\overline{V}_{t_{j-1}}^{i,n} \big)_{j=1:n} +G_{k,\cdot}^i\big( \sigma^v_i \varsigma^i(t_{j}) \sqrt{ \overline{V}_{t_{j-1}}^{i,n}})_{j=1:n}. 
\end{equation}
\noindent which is simulated on the discrete  grid \((t^n_k)_{0\leq k\leq n}\) by generating an independent sequence of gaussian vectors \( G^i_{k,\cdot}, k=1 \cdots n\) using an extended and stable version of Cholesky decomposition of a well-defined covariance matrix.
The reader is referred to \cite[Appendix A]{EGnabeyeu2025, GnabeyeuPages2026} for further details about the simulation of the Gaussian stochastic integrals terms in the semi-integrated Euler scheme introduced in this context for Equation~\eqref{VolSqrt2}-\eqref{VolSqrt_}. Theoretical guarantees for the convergence of this numerical scheme, as well as the convergence rate are established 
for more general Kernels and path-dependent coefficients in~\cite{GnabeyeuPages2026}.


\medskip
\noindent
To design an approximation scheme for solving the two-dimensional Riccati--Volterra system of equations 
~\eqref{eq:Riccatipsi1}--~\eqref{eq:Riccatipsi2} numerically,
we employ as in~\cite[section 4.1]{dro2026optimal} the generalized Adams--Bashforth--Moulton method, often referred to as the \textit{fractional Adams method} investigated in~\cite{DiethelmFordFreed2002,DiethelmFordFreed2004} as a useful numerical algorithm for solving a fractional ordinary differential equation (ODE) based on a predictor-corrector approach.



\medskip
\noindent {\bf Remark:} Denote the fractional integral 
 as \(I^{\alpha_i}\psi(t) =
K_{\alpha_i} \star \psi(t)\), then we show by integration by parts that, equation~\eqref{eq:Gamma0} reads:
\begin{equation}\label{eq:Gamma_0Const}
	\Gamma_0= \exp\Big( 2\int_0^T r(s)  
    ds +  \sum_{i=1}^d V_0^i\,I^{1-\alpha_i}\psi^i(T) +  \sum_{i=1}^d \mu^i_0 I^{1}\psi^i(T) \Big).
\end{equation}
Consequently, it is possible to make use of the open-source Python package \textit{differint} to compute the fractional integrals \(I^{1-\alpha_i}\) and \(I^{1}\) in equation~\eqref{eq:Gamma_0Const} for each \(i=1,2\).
\noindent Recall that the fractional integral of order \( r \in (0, 1] \) of a function \( f \) is \(I^r f(t) = \frac{1}{\Gamma(r)} \int_0^t (t - s)^{r - 1} f(s) \, ds,\)
	whenever the integrals exist.
\subsection{Numerical illustrations of the $2D$ fake stationary rough Heston volatility}
	\noindent 
    In this section, we provide graphical illustrations of the two-dimensional fake stationary rough Heston volatility described at the beginning of Section~\ref{Sec:Num}.
\noindent We consider as in~\cite{Gnabeyeu2026b} the following estimates for the model parameters: The initial variance \(V_0\) defined in~\eqref{VolSqrt_} is such that  \(V_0^i \sim \text{Gamma}(\frac{(m_0^i)^2}{\Omega_{ii}},\frac{\Omega_{ii}}{m_0^i})\) and 
\[
c = \begin{pmatrix}
	0.01 \\
	0.03
\end{pmatrix}, \;
\mu_0 = \begin{pmatrix}
	2.0 \\
	1.0
\end{pmatrix},\;
D = \begin{pmatrix}
	-0.20 & 0 \\
	0 & -0.20
\end{pmatrix}, \quad
\Sigma = \begin{pmatrix}
	-0.7 & 0 \\
	0 & -0.55
\end{pmatrix}, \;
\theta = \begin{pmatrix}
	0.1 \\
	0.12
\end{pmatrix}, \;
\sigma^v = \begin{pmatrix}
	0.40 \\
	0.32
\end{pmatrix}.
\]
Moreover, we set the risk-free rate $r = 0.02$, the initial
wealth $x_0 = 2$ and the expected terminal wealth $m= x_0 e^{(r+0.1)T} = 2.255$.
\begin{figure}[H]
	\centering
	\includegraphics[width=0.93\linewidth]{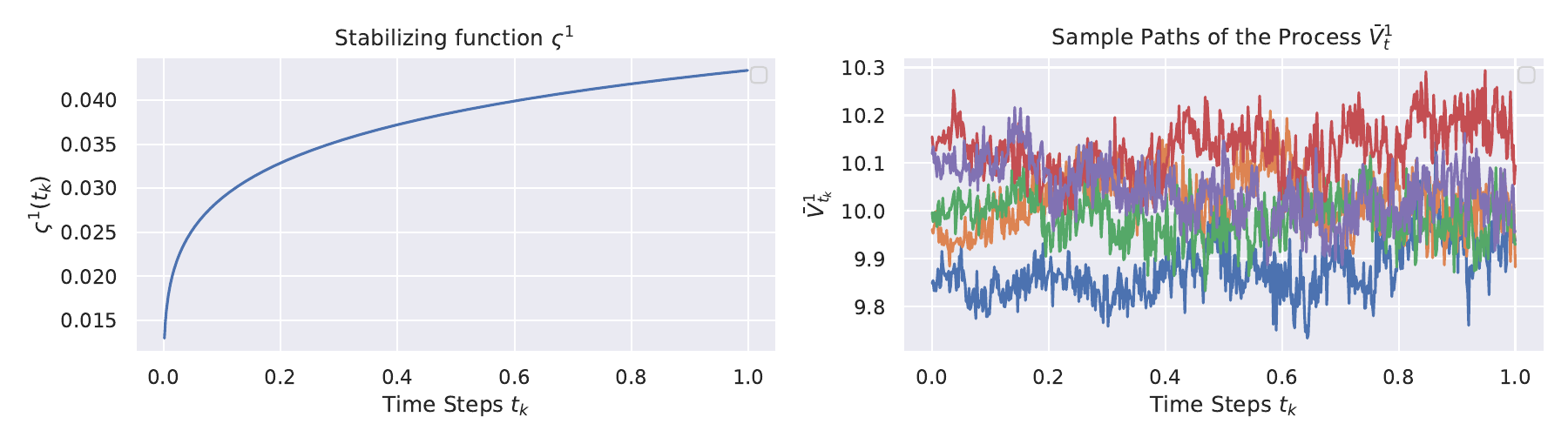
	}
	\caption{\textit{Graph of the stabilizer $ t \to \varsigma_{\alpha_1,\lambda_1,c_1}(t)$ (left) and \(5\) samples paths \( t_k \mapsto V^1_{t_k} \) (right) over the time interval \( [0, 1] \), for the Hurst esponent \( H = 0.1 \), \( c_1 = 0.01 \) and number of time steps \( n = 600 \).}}\label{fig:stabil_Mean1}
\end{figure}
\begin{figure}[H]
	\centering
	\includegraphics[width=0.93\linewidth]{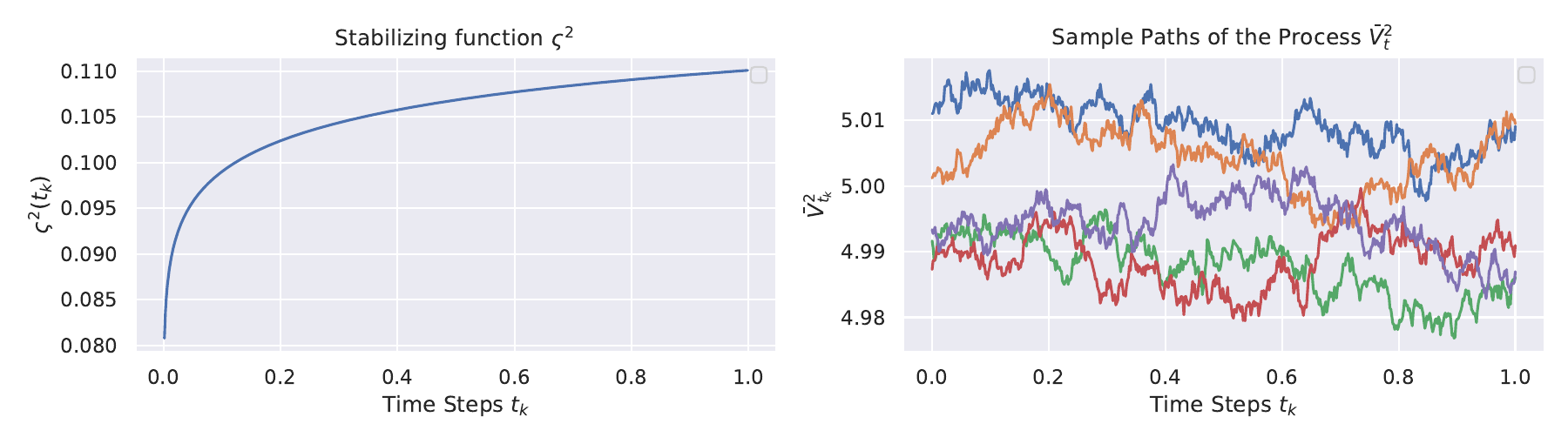
	}
	\caption{\textit{Graph of the stabilizer $ t \to \varsigma_{\alpha_2,\lambda_2,c_2}(t)$ (left) and \(5\) samples paths \( t_k \mapsto V^2_{t_k} \) (right) over the time interval \( [0, 1] \), for the Hurst esponent \( H = 0.4 \), \( c_2 = 0.03\) and number of time steps \( n = 600 \).}}\label{fig:stabil_Mean2}
\end{figure}
\noindent The stabilizing functions $\varsigma\equiv\varsigma_{\alpha,D,c}$ defined by $\textit{($E_{\alpha_i, \lambda_i, c_i}$)}$ in~\eqref{eq:VolterraStabilizer2}
			 is designed to ensure that the volatility processes $(V^1, V^2)$ exhibit constant marginal means (Figures~\ref{fig:stabil_Mean1}--\ref{fig:stabil_Mean2}) and variances (cf.~ Figures~\ref{fig:_variance1}--\ref{fig:_variance2} below) over time, thereby ensuring invariance of the first two moments under time shifts.
\begin{figure}[H]
	\centering
	\includegraphics[width=0.91\linewidth]{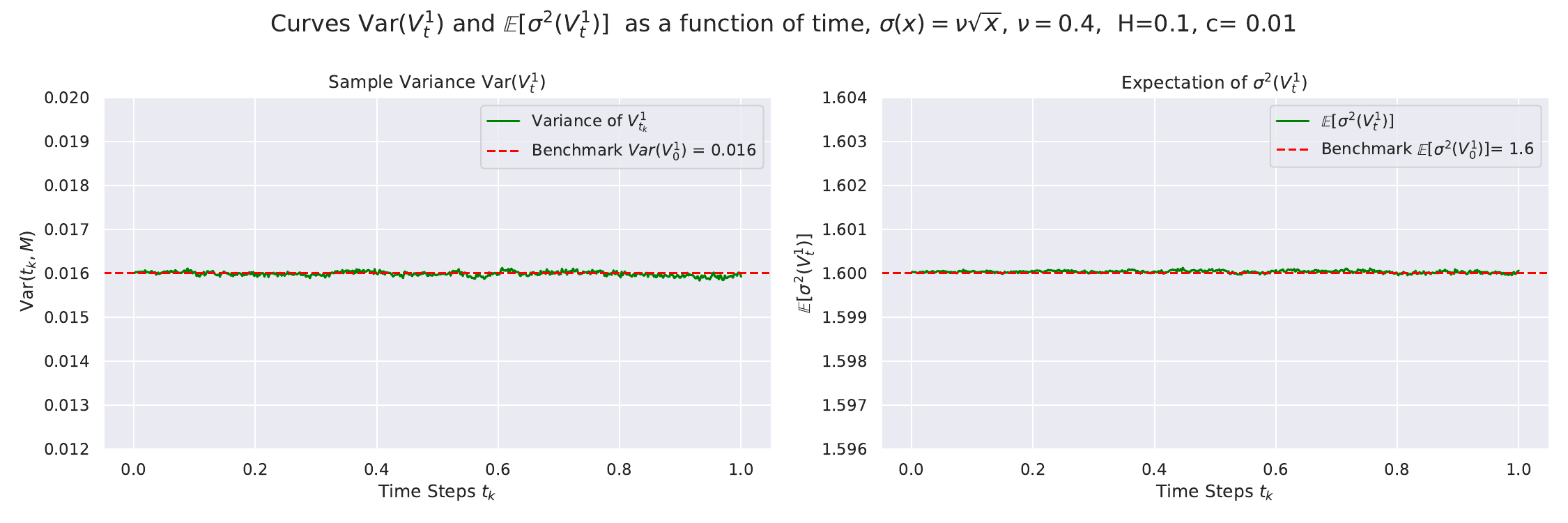}
	\caption{\textit{Graph of \( t_k \mapsto \text{Var}(V^1_{t_k}, M) \) and \( t_k \mapsto \mathbb{E}[\sigma^2(V^1_{t_k},M)] \) over \( [0, 1] \), \( c_1 = 0.01 \) and \( n = 600 \).}}\label{fig:_variance1}
    \vspace{-.3cm}
\end{figure}
\vspace{.2cm}
\begin{figure}[H]
	\centering
	\includegraphics[width=0.91\linewidth]{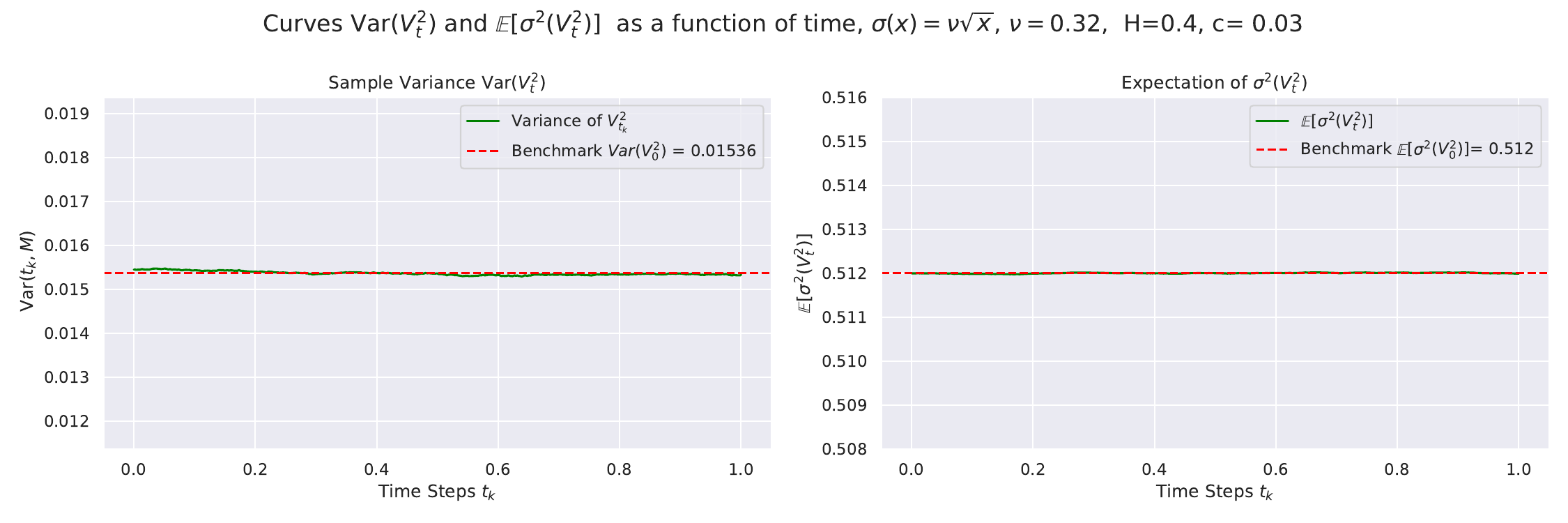}
	\caption{\textit{Graph of \( t_k \mapsto \text{Var}(V^2_{t_k}, M) \) and \( t_k \mapsto \mathbb{E}[\sigma^2(V^2_{t_k},M)] \) over \( [0, 1] \), \( c_2 = 0.03 \) and \( n = 600 \).}}\label{fig:_variance2}
\end{figure} 

\vspace{.2cm}
 \subsection{Graphical illustrations of the optimal strategy and the efficient frontier}
 \noindent In this section, we illustrate the results of Section~\ref{Sec:Markowitz} by numerically computing the optimal portfolio strategy for a special case of two-dimensional fake stationary rough heston model. 
\begin{figure}[H]
	\centering
	\begin{minipage}{0.49\textwidth}
		\centering
		\includegraphics[width=0.98\textwidth]{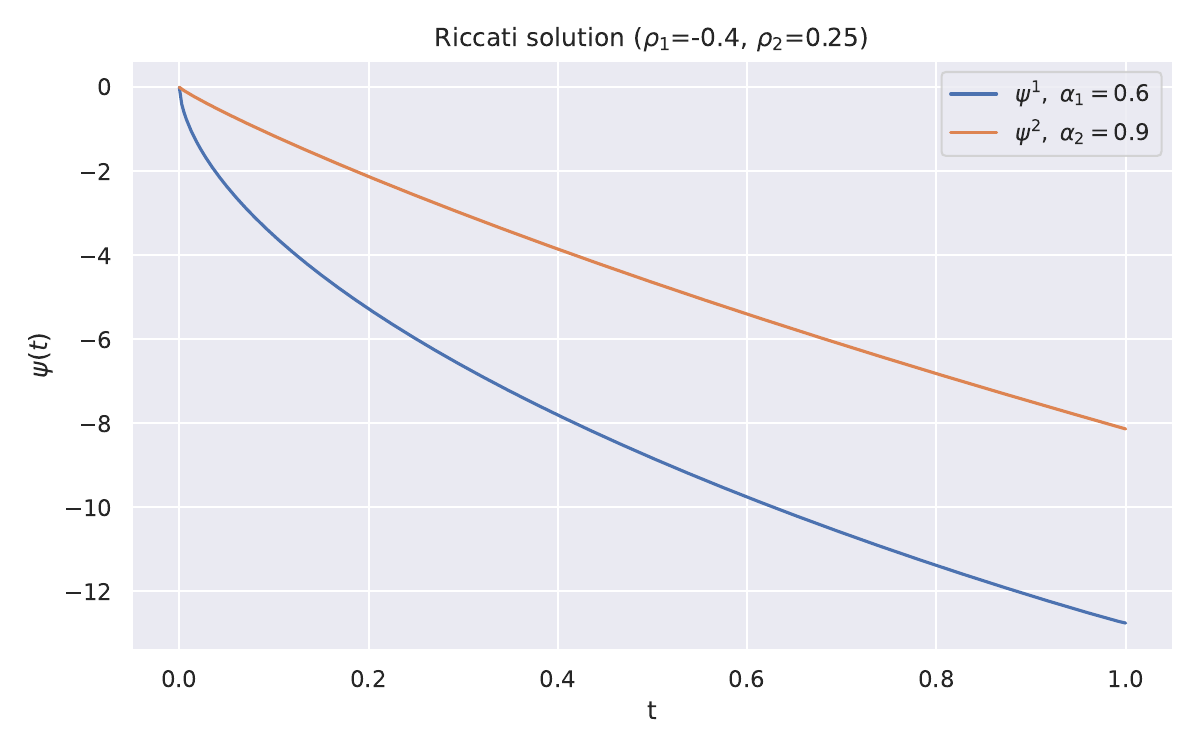}
		\subcaption{\textit{ Graph of \( t_k \mapsto \psi_{t_k} \) with risk premium parameter\\ $\theta = (3.6,3.0)$ and correlation $|\rho_i| \leq \frac12$.}}\label{Fig:RiccatiA}
	\end{minipage}%
	\begin{minipage}{0.49\textwidth}
		\centering
		\includegraphics[width=0.98\textwidth]{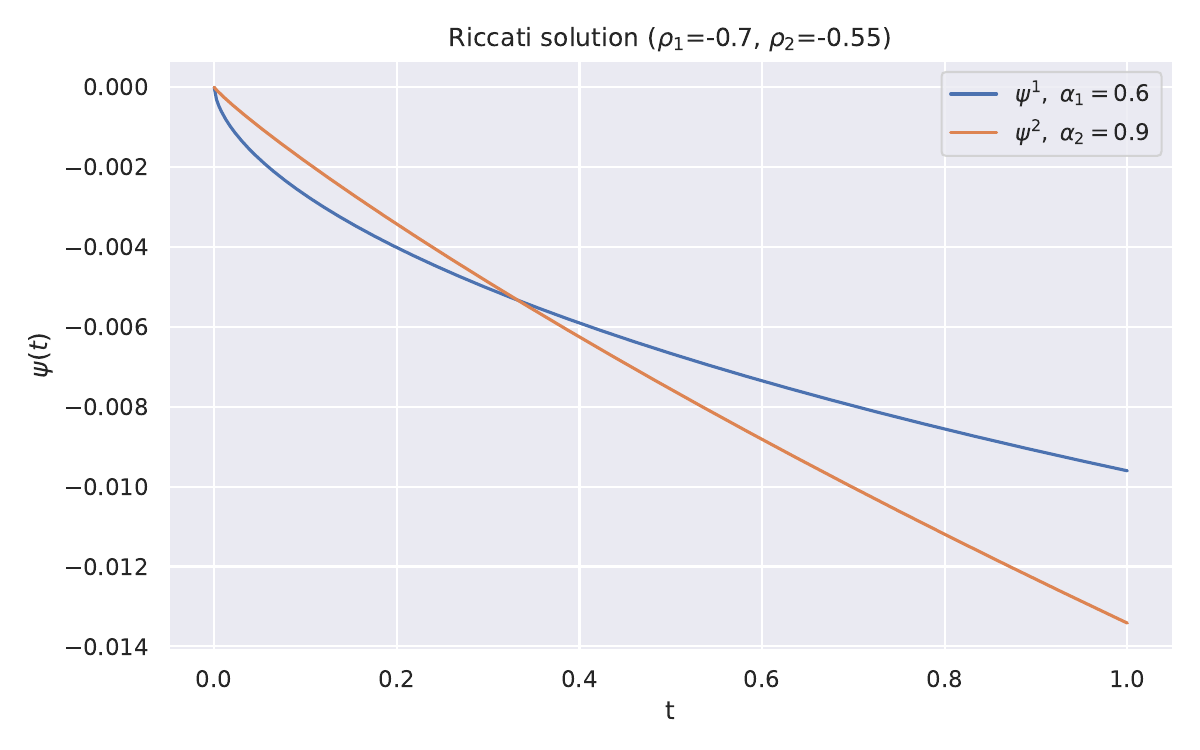}
		\subcaption{\textit{ Graph of \( t_k \mapsto \psi_{t_k} \) with risk premium parameter\\ $\theta = (0.1,0.12)$ and correlation $|\rho_i| > \frac12$.}}\label{Fig:RiccatiB}
	\end{minipage}
	\caption{\textit{ Graph of \( t_k \mapsto \psi^1_{t_k} \) and \( t_k \mapsto \psi^2_{t_k} \) over \( [0, 1] \) with the fractional Adams algorithm and the number of time steps \( n = 600 \) both for correlation verifying $|\rho_i| \leq \frac12$ (left) and $|\rho_i| > \frac12$ (right), \(i\in\{1,2\}\).}
}
\end{figure}
\noindent Figures~\ref{Fig:RiccatiA}--\ref{Fig:RiccatiB} also confirm the Remark on Proposition~\ref{prop:MV_riccati}, that is the claim that $\psi \leq 0$ whenever $1 - 2\rho_i^{2} \geq 0$ or $1 - 2\rho_i^{2} < 0$ for every \(i\in\{1,2\}\).

\bigskip
\noindent We consider in Figure~\ref{fig:Soptstrat}, the evolution of the optimal strategy of each stock, that is, the mapping \(t \mapsto \alpha_t^*,\) along a single initial variance \(\bar{V}_0=(\frac{\mu_0^1}{\lambda_1}, \frac{\mu_0^2}{\lambda_2})\) and the evolution of the associated wealth process given by the mapping \(t \mapsto X_t^{\alpha^*}\).
\begin{figure}[H]
	\centering
	\includegraphics[width=0.98\linewidth]{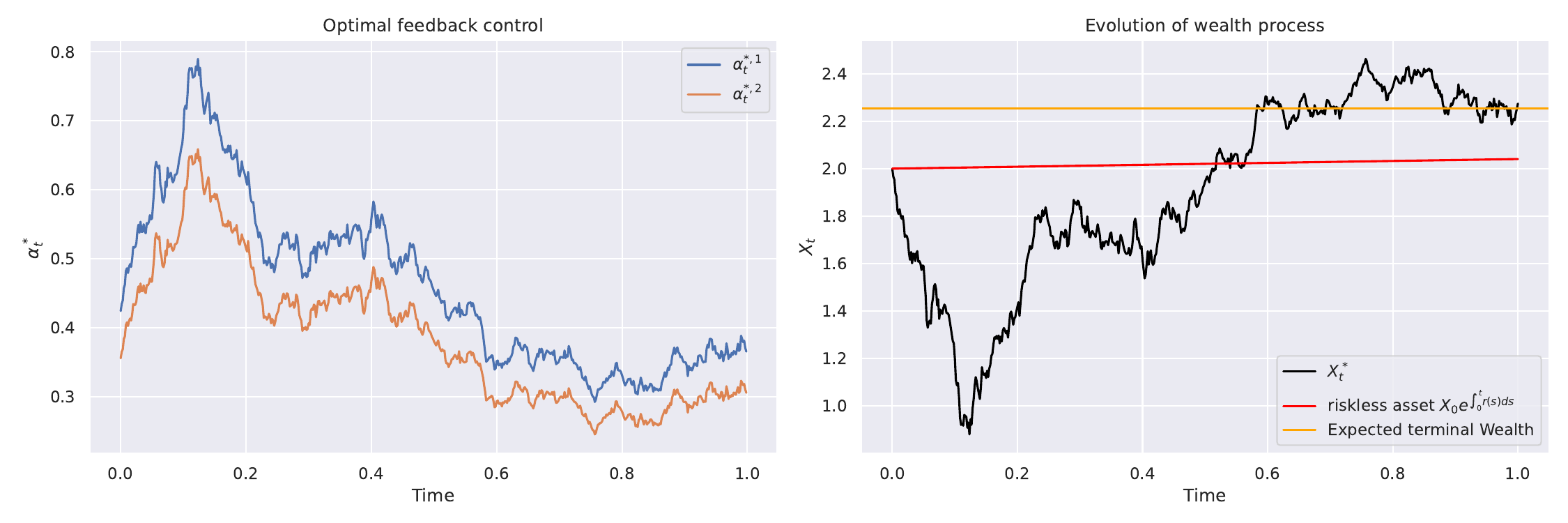}
	\caption[Optimal startegies and Wealth.]{\textit{Graph of \( t_k \mapsto \alpha_{t_k}^* \) (left) and \( t_k \mapsto X_{t_k}^{\alpha^*} \) (right) over the time interval \( [0, T] \), \( T = 1 \). Number of steps: \( n = 600 \).}}\label{fig:Soptstrat}
\end{figure}

\noindent Since the optimal strategies $(\alpha^*_t)_{t\in[0,T]}$ given by~\eqref{eq:optimal_control_Heston2} are stochastic processes as well as the corresponding wealth process~\eqref{eq:wealth}--\eqref{eq:wealthProcess}, we rather consider in what follows, the evolution of the associated deterministic mapping \(t \mapsto \mathbb{E}[\alpha_t^*]\) and \(t \mapsto \mathbb{E}[X_t^{\alpha^*}]\)
\begin{figure}[H]
	\centering
	\includegraphics[width=0.95\linewidth]{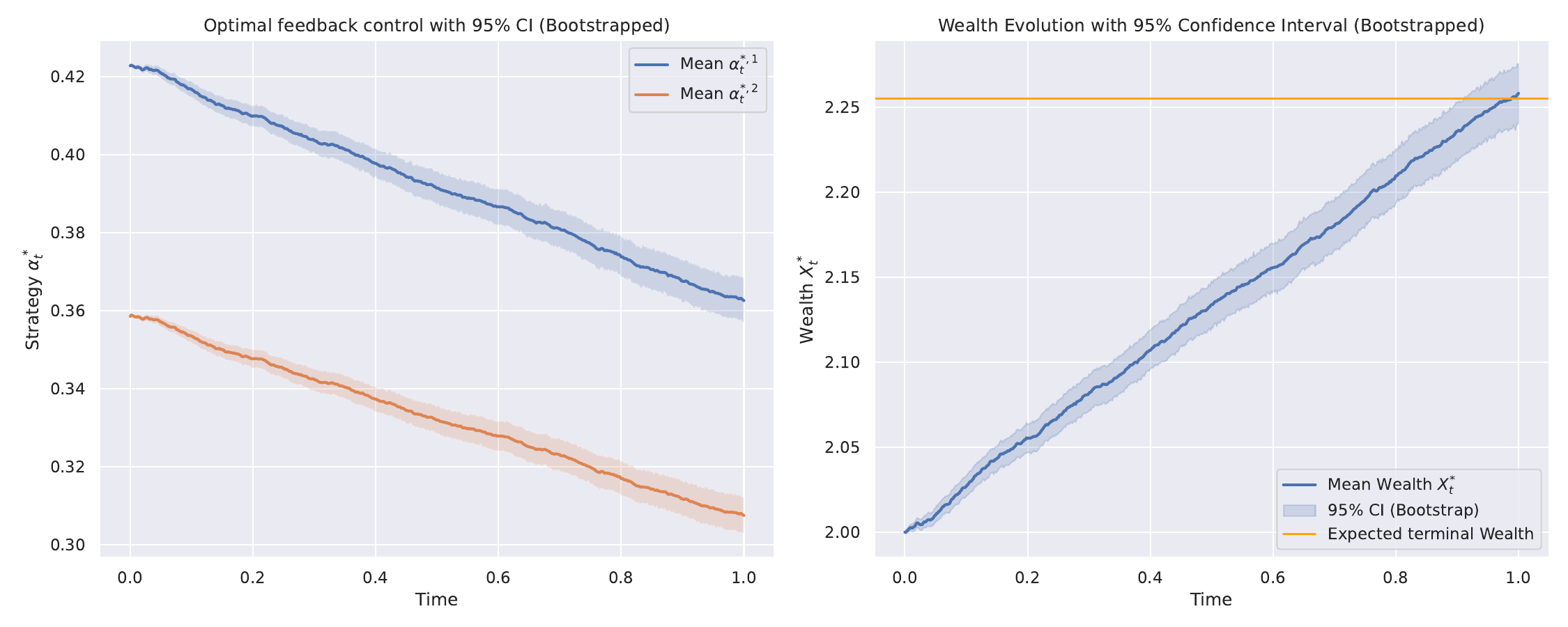}
	\caption{\textit{Statistics for strategies and wealth. Based on \(5000\) simulated paths, the solid line plots
		the mean and the shadow area is the \(95\%\) confidence interval estimated by bootstrapping. The terminal wealth is closer to the expected
		value \(m = 2.255\). The parameters are the same as in Figure~\ref{fig:Soptstrat}.}}\label{fig:Soptstrat_mc}
\end{figure}
\noindent As illustrated in Figure~\ref{fig:Soptstrat}--\ref{fig:Soptstrat_mc}, the fake stationary rough Heston strategy considered in these examples achieves an average terminal wealth closer to the target \(m := 2.255\).

\noindent The form of the
control~\eqref{eq:optimal_control_Heston}--~\eqref{eq:optimal_control_Heston2} is also dictated by the stabilizing function in~\eqref{eq:VolterraStabilizer2}, designed to ensures consistency across different time scales.


\medskip
\noindent Although rough volatility models excel in capturing high-frequency dynamics with remarkable accuracy, their inherent lack of stationarity renders them unsuitable for long-term investment analysis. Consequently, this framework ~\eqref{VolSqrt_}--\eqref{eq:VolterraStabilizer2}(\textit{stabilized models}) is pivotal for formulating a well-defined infinite-horizon investment problem, as well as for time-series analysis and long-term econometric estimation.
    \begin{figure}[H]
	\centering
	\subfloat[$T = 0.5$]{
		\label{efficient_frontier_small_T}
		\includegraphics[width=55mm]{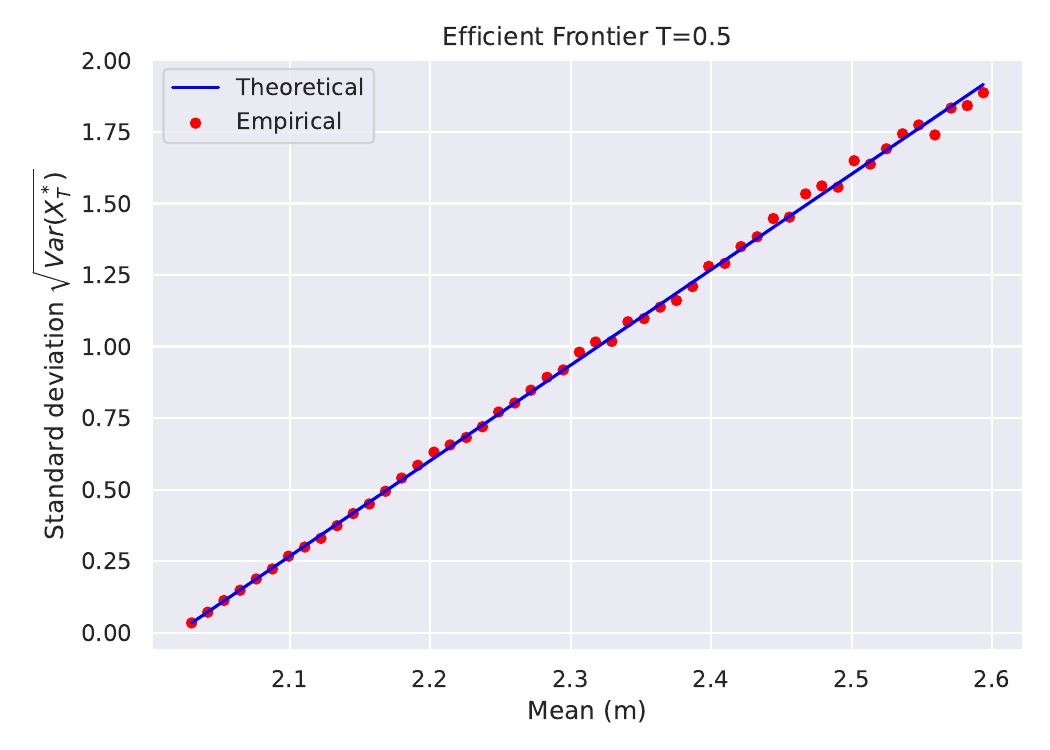}} 
	\subfloat[$T = 1.0$]{
		\label{efficient_frontier_medium_T}
		\includegraphics[width=55mm]{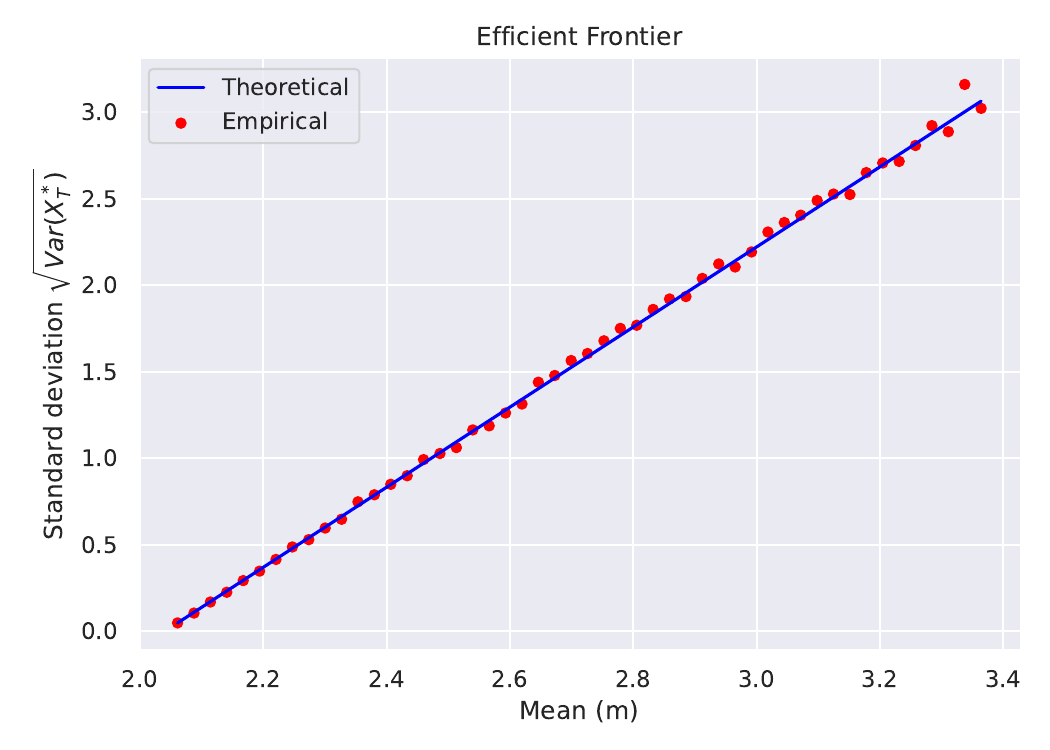} 
	}
	\subfloat[$T = 5.0$]{
		\label{efficient_frontier_big_T}
		\includegraphics[width=55mm]{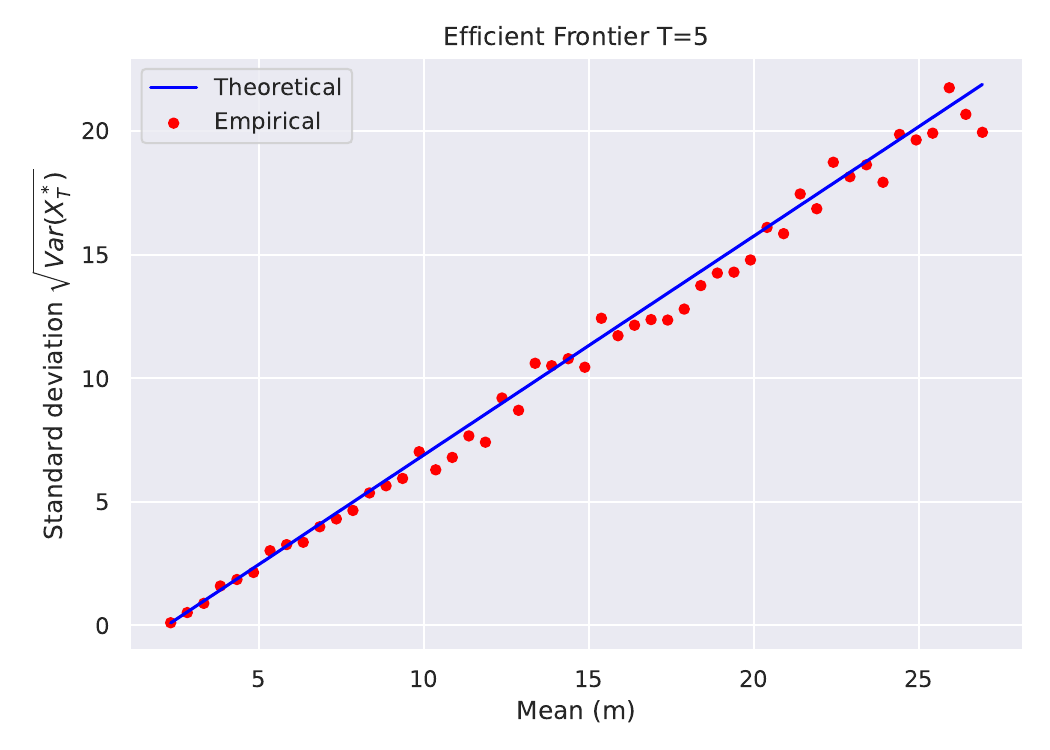}} 
	
	\caption[Efficient frontier.]{\textit{Plots of the efficient frontier and variance.
			We set the parameters $x_0 = 2$, $T \in \{0.5, 1.0, 5.0\}$, and $m \in \left[x_0 e^{(r+0.01)T},\, x_0 e^{(r+0.5)T}\right]$.
			The solid curve corresponds to the theoretical frontier in~\eqref{eq:effFront}, while the dotted curves represent the variance obtained from the Monte Carlo simulations of \(X_T^{\alpha^*}\).}}
	\label{fig:EfficientFront}
\end{figure}
\noindent
 We observe that, when the horizon $T$ is small,  the variance of the terminal wealth \(X_T\) given in~\eqref{eq:value_final} tends to be smaller.

    
\section{Additional technical Proofs of the results}\label{sect:proofMresult} 

\subsection{Proof of Proposition \ref{prop:MV_riccati}}

The BSDEJ~\eqref{eq:bsde_mv} is a simplified version of~\eqref{eq:BSDEJ_general}-~\eqref{eq:Driver_General} in which we take $c_y\equiv g_y\equiv0$, $g_{z\sqrt{x}}\equiv g_x\equiv0$ and $g\equiv g_\nu\equiv h$ in equation~\eqref{eq:ExpCom_functs} for all $t\leq T$. 
By Theorem~\ref{Thm_BDSEJ_Gen} together with \(Y_T=0\), we get that $\left(Y,\Lambda, U\right)$ as defined in ~\eqref{eq:Solbsde_mv} is the unique $\mathbb{D}([0,T], \R) \times L^2_{\F}([0,T], \R^d)\times L^2_{\F}([0,T], \tilde{N})$-valued solution triplet to the BSDEJ~\eqref{eq:bsde_mv} provided uniqueness of a continuous solution to the inhomogeneous Riccati--Volterra equation with jump compensator \eqref{eq:Riccatipsi1}-\eqref{eq:Riccatipsi2} for $\psi$

 {It remains to show that $Y \in \mathbb{S}^{p}_{\F}([0,T],\R)$ for some sufficiently large \(p>1\). 
	First, notes that we have the representation \(Y_t:=\E \Big[ - \int_t^T  f\big(Y_s, \Lambda_s, U_s\big) ds \mid \mathcal F_t\Big]\) where \(Y_T=0\) and \(f\) is the driver of \eqref{eq:bsde_mv}. 
	Consequently, we may write:
	\[\forall\, t\in[0,T], \quad Y_t\leq \E \Big[ -2r(t) + \int_0^T \left| \lambda_s + \Sigma \Lambda_s \right|^2 ds \mid \mathcal F_t\Big]:=M_t\]
	Using the elementary inequality \((a+b)^p \le 2^{(p-1)^+}\big(a^p + b^p\big)\) for \(a,b>0\), one gets
	\begin{equation}\label{eq:boundOpt}
		|\lambda_s +  \Sigma\Lambda_s|^2 \leq 2(|\lambda_s|^2 +  |\Sigma\Lambda_s|^2) \leq 2(1 + |\Sigma|^2)(|\lambda_s|^2 +  |\Lambda_s|^2) \leq 2(1 + |\Sigma|^2)(|\lambda_s|^2 +  |\Lambda_s|^2).
	\end{equation}
	Consequently, by simple calculation, using the inequality $|z|^q\leq c_{q}e^{|z|}$ for all $z \in \R$, together with  Cauchy-Schwarz inequality in the third line:
	\begin{align}
		&\,\E \Big[ -2r(t) +\int_0^T \left| \lambda_s+ \Sigma \Lambda_s \right|^2ds \Big] \leq c_{1} \E \big[ \exp\big(|M_T|\big)\big]\label{eq:boundExpoM_T}\\
		&\hspace{1.2cm}\leq c_{1}\E \Big[ \exp\Big(\int_0^T2|r(s)| ds+ (1 + |\Sigma|^2)\int_0^T \big(|\lambda_s|^2 +  |\Lambda_s|^2\big)ds\Big) \Big]\nonumber\\
		&\hspace{1.2cm}\leq c_{1} \Big(\E \Big[ \exp\Big(4\int_0^T |r(s)| ds\Big) \Big]\Big)^\frac12\Big(\E \Big[ \exp\Big(2(1 + |\Sigma|^2)\int_0^T \big(|\lambda_s|^2 +  |\Lambda_s|^2\big)ds\Big) \Big]\Big)^\frac12<\infty\nonumber
	\end{align}
	thanks to the Novikov-type condition~\eqref{eq:assumption_novikov} with constant \(2a = 2(1 + |\Sigma|^2)\) , together with the exponential affine representation in ~\cite[Theorem A.1.]{dro2026optimal}.  Therefore, $M_t$ is a martingale under $\P$.  
	Let $ p > 1 $.
	By {virtue of Doob's maximal inequality} together with the inequality $|z|^q\leq c_{q}e^{|z|}$  for all $z \in \R$: 
	\begin{align*}
		&\, \E \Big[ \sup_{ t \in [0, T]} \big| Y_t \big|^p \Big] \leq \E \Big[ \sup_{ t \in [0, T]} \big| M_t \big|^{p}\Big] \leq \Big(\frac{p}{p-1}\Big)^{p} \E \Big[ |M_T|^p\Big]\leq c_{p} \Big(\frac{p}{p-1}\Big)^{p} \E \big[ \exp\big(|M_T|\big)\big]< \infty
	\end{align*}
	which is finite owing to ~\eqref{eq:boundExpoM_T}.  Therefore, $\E\big[ \sup_{ t \in [0, T]} |Y_t|^p \big] < \infty $ holds. 

    \smallskip
	\noindent Furthermore, as a direct consequence of \cite[Lemma~5.1]{dro2026optimal} with $g_{2} = 1$ and $g_{1}(s) = 0$, the stochastic exponential \(M\) in~\eqref{eq:localMart_M} where \(Z=\Lambda\) is a true $\P$- martingale so that the representaton~\eqref{eq:ito_gamma}
	 ensures that $0<\Gamma_t\leq e^{2 \int_t^T r(s) ds}$, $\P-a.s.$, since $V\in \mathbb R^d_+$, so that \(\Gamma \in \mathbb{S}^{\infty}_{\F}([0,T],\R)\). Therefore, it follows that $\left(\Gamma,\Lambda, U\right)$  is a  $\mathbb{S}^{\infty}_{\F}([0,T], \R) \times L^2_{\F}([0,T], \R^d)\times L^2_{\F}([0,T], \tilde{N})$-valued solution to the Riccati backward stochastic differential equation with jumps (BSDEJ)~\eqref{eq:GammaDef2}.
	
	In addition,  \eqref{eq:ito_gamma} implies that $\Gamma_0 < e^{2\int_0^T r(s) ds}$ since $g^i_0(0)>0$ for some $i\leq d$ by assumption and $V^i$ is continuous and positive.\\
	\noindent
	To derive equation~\eqref{eq:Gamma0},  recalling~\eqref{VolSqrt2}--\eqref{eq:processg}, we have that the initial adjusted forward process curve \( s \mapsto g^i_0(s) \) verifies for \(i = 1, \ldots, d\), \(g^i_0(s) = V_0^i \varphi^i(s) + \int_0^s K(s-u) \mu(u) \, du\). Applying regular Fubini's theorem, together with a change of variables, and using equation~\eqref{eq:Riccatipsi1} in the last line, we obtain successively (here we set, \(\forall\, s\in[0,T], \; g_i (s,\psi):=-\theta_i^2  + F_i(s,\psi)\) for every \(i=1,\ldots,d\)):
	\begin{align*}
		&\,\int_0^T  g_i(s,\psi(T-s)) \int_{0}^{s}K(s-u)\mu(u)\,du ds = \int_0^T  \int_{u}^{T}K(s-u) \big(-\theta_i^2  + F_i(s,\psi(T-s))\big)  ds  \mu^i(u) \,du \\
		&\hspace{6cm}= \int_0^T  \int_{0}^{T-u}K(T-u-s) \big(-\theta_i^2  + F_i(T-s,\psi(s))\big)  ds  \mu^i(u) \,du \\
		&\hspace{6cm}= \int_0^T  \psi(T-u) \mu^i(u) \,du
	\end{align*}
	This complete the proof of the proposition. \hfill $\Box$

	\subsection{Proof of Theorem~\ref{thm:inner} }\label{subsect:proofMresultMarkowitz2}
	\noindent {\sc Step~1} (\textit{Solution of the inner Problem:})
	Let's us consider the inner Problem~\eqref{pb:P(c)} with an arbitrary 
	\(\xi \in\R\) and define $\Tilde{X}^{\alpha}_t = X_t^{\alpha} - \xi e^{-\int_t^T r(s) ds}$, for any $\alpha \in \mathcal{A}$ as in the preamble of Section~\ref{Sec:Markowitz}. 
    Then, by It\^o's lemma we have that $\Tilde{X}^{\c}$ satisfies~\eqref{eq:Tilwealth}.  In particular,
	\(\tilde{X}^{\c}\) and \(X^{\c}\) have the same dynamics, with
	\(\tilde{X}^{\c}_T = X^{\c}_T - \xi\). Consequently, problem~\eqref{pb:P(c)}
	reduces to~\eqref{pb:AlterP(c)}.

        We show that $J_t^{\alpha}$ defined in~\eqref{eq:ansatz} fulfills the martingale optimality principle in the sketch of proof. For the first condition, note that $Y_T=0$ and hence \(\Gamma_T=1\) and $J_T^{\alpha}=|\tilde{X}^{\alpha}_T|^2$. Since $\exp(Y_0)$ is a constant independent of $ \alpha \in \cA$, $J^\alpha_0 = \exp(Y_0) \Big|x_0-\xi e^{-\int_0^T r(s)ds}\Big|^2$ is a constant independent of $ \alpha \in \cA$ and consequently, the second condition is also satisfied. In order to show that the third condition is fulfilled, we first note that by Propositions~\ref{prop:MV_riccati} and~\ref{prop:preambule}, $J^\alpha$ is a submartingale for any admissible strategy $\alpha\in\mathcal{A}$.
    Moreover, we obtain that a candidate for the optimal control $\c^*(\xi)$ is given by~\eqref{eq:optimal_control}. In particular, $J^{\alpha^*}$ is a martingale by~\cite[Lemma 5.1]{dro2026optimal}, and all conditions of the
	martingale optimality principle are satisfied, except for the admissibility of the
	candidate optimal strategy $\alpha^*$ , which follows from Steps~2 and~3 below.	
    
	\smallskip
	\noindent {\sc Step~2} (\textit{Existence and uniqueness of $\c^*(\xi)$,  $\xi \in \R$ being fixed:})
	First note that, the uniqueness of  $\c^*(\xi)$  follows directly from equation~\eqref{Eq:Square} and  non-degeneracy of \(\Gamma\) ( $\Gamma_t>0$, $\p$-$\as$, $\forall \; t \in [0, T]$). 
	The existence of a control $\c^*(\xi)$ satisfying  ~\eqref{eq:optimal_control} is also guaranteed by the existence of the solution \(X^{\alpha^*}\). For this, we prove that  the corresponding wealth equation~\eqref{eq:wealth}
	admits a solution.  As in the proof of Step~1 above, it is enough to consider the modified equation 
	\begin{align*}
		d\Tilde X_t^* &= \big(r(t) \Tilde X_t^*  + \lambda_t^\top A_t \Tilde X_t^*  \big)dt + \big(A_t   \Tilde X_t^* \big)^\top dB_t, \quad
		\Tilde X_0^* \; = \; x_0 -\xi e^{-\int_0^T r(s) ds},
	\end{align*}
	where $A_t = - \left(\lambda_t + \Sigma \Lambda_t\right)$, and then set  $X_t^{*}= \Tilde{X}^{*}_t + \xi e^{-\int_t^T r(s) ds}$. By virtue of  It\^o's lemma the unique continuous solution is given by    
	\begin{equation}
		\Tilde{X}^{*}_t =\Tilde{X}^{*}_0 \exp\Big(\int_0^t \big(r(s) + \lambda_s^\T A_s - \frac{A_s^\T A_s}{2} \big)ds + \int_0^t A_s^\T dB_s\Big).
	\end{equation}
	Setting $\alpha^*_t(\xi)$ $:=$ $A_t\Tilde X^*_t$, we obtain that $\alpha^*(\xi)$ satisfies \eqref{eq:optimal_control} with the controlled wealth $X^{\alpha(\xi)^*}=X^*$. The crucial step is now to  
	obtain the admissibility condition \eqref{eq:estimateX}. For that purpose, observe by virtue of  \eqref{eq:assumption_novikov}, that the Dol\'eans-Dade exponential $\mathcal E\left( \int_0^{\cdot} A_s^\top dB_s\right)$ satisfies Novikov's condition, and is therefore a true martingale. 
	Whence, successive applications of the arithmetic mean-geometric mean (AM-GM) inequality $ab\leq (a^2+b^2)/2 $ together with Doob's maximal inequality yield, for some $p\geq1$,
	\begin{align*}
		\E \Big[ \sup_{t \in [0,T]} |\tilde{X}^*_t|^p \Big] &\leq  \frac{x_0^p}{2}  \left(\E \Big[ \sup_{t \in [0,T]}  \big|  e^{\int_0^t {\left(r(s) + \lambda_s^\T A_s \right)}ds} \big|^{2p}  \Big] 
		+ \E \Big[ \sup_{t \in [0,T]} \Big| e^{-\int_0^t \frac{A_s^\T  A_s}{2}  ds + \int_0^t A_s^\T dB_s} \Big|^{2p}  \Big]\right) \\
		&\leq \frac{x_0^p}{2} \left(e^{ 2p \int_0^Tr(s) ds} \E \Big[   e^{ 2p  \int_0^T | \lambda_s^\T A_s | ds}  \Big] + \Big(\frac{2p}{2p-1}\Big)^{2p} \E \Big[   e^{- p \int_0^T A_s^\T  A_s  ds + 2p\int_0^T A_s^\T dB_s}  \Big]\right)
	\end{align*}
	which is finite since on the first hand condition~\eqref{eq:assumption_novikov} ensures that the first term is finite i.e. 
	\begin{equation}
		\E \Big[   e^{ 2p  \int_0^T | \lambda_s^\T A_s | ds}  \Big] \leq  \E \left[ \exp\left({a(p) \int_0^T \left( |\lambda_s|^2 + |\Lambda_s|^2\right)  ds}\right)    \right]< \infty,
	\end{equation}
	with constant \(a(p) =p \left(2 + |\Sigma| \right)\) where 	we used the elementary inequality $|ab|\leq (|a|^2+|b|^2)/2 $, to bound
	\begin{equation*}
		|\lambda_s^{\T} A_s| = \big(|\lambda_s|^2 + |(\Sigma\Lambda_s)^\T\lambda_s|\big) \leq (|\lambda_s|^2 + |\Sigma| \frac{|\lambda_s|^2 + |\Lambda_s|^2}{2}) \leq \frac12(2 + |\Sigma|)(|\lambda_s|^2 +  |\Lambda_s|^2)
	\end{equation*}
	and, on the other hand, for the second term {by virtue of the Cauchy-Schwarz inequality},
	\begin{align*}
		\E \Big[   e^{- p \int_0^T A_s^\T  A_s  ds + 2p\int_0^T A_s^\T dB_s}  \Big] &\leq \left(   \E \left[ e^{(8p^2 - 2p) \int_0^T  A_s^\T  A_s   ds} \right]    \right)^{1/2} \left(\E \left[  e^{-8p^2\int_0^T A_s^\T  A_s ds + 4p \int_0^T A^\T_s dB_s}    \right] \right)^{1/2} \\
		&  \leq  \left(   \E \left[ e^{a(p) \int_0^T \left(|\lambda_s|^2 + |\Lambda_s|^2 \right)  ds} \right]    \right)^{1/2} \times 1  < \infty,
	\end{align*}
	with \(a(p) =2 (8p^2 {- 2p}) \left( 1  + |{\Sigma}|^2  \right)\) and where we used Jensen's inequality to bound 
	\begin{equation}
		A_s^\top A_s = |\lambda_s + \Sigma\Lambda_s|^2 \leq 2(|\lambda_s|^2 +  |\Sigma\Lambda_s|^2) \leq 2 (1 + |\Sigma|^2)(|\lambda_s|^2 +  |\Lambda_s|^2),
	\end{equation}
	together with condition~\eqref{eq:assumption_novikov} and Novikov's condition to the Dol\'eans-Dade exponential $\mathcal{E}(4p \int_0^{{\cdot}} A_s^\T dB_s)$. 
	
	\smallskip
	\noindent {\sc Step~3} (\textit{Proof of Admissibility and optimal value for the inner problem:}) We next address the admissibility of the optimal control $\c^*(\xi)$ in~\eqref{eq:optimal_control} for any \(\xi\in\R\). 
	Finally, to get that $\c^*(\xi)$ is admissible,  we are left to prove that $\alpha^*(\xi) \in L^2_{\F}([0,T], \R^d)$.   Let $1/p + 1/q=1$, for some $p,q>1$. By H\"older's inequality, we may write  
	\begin{align*}
		\E \left[ \int_0^{T} |\c_s^*(\xi)|^2 ds \right] & { =} \E \left[ \int_0^{T} | A_s \tilde{X}_s^*|^2 ds \right] \leq  \E \left[ \sup_{t \in [0,T]} |\tilde{X}_t^*|^2 \int_0^T |A_s|^2 ds \right] \\
		& \leq \Big(  \E \Big[ \sup_{t \in [0,T]} |\tilde{X}_t^*|^{2p}  \Big] \Big)^{\frac1p} \Big( \E\Big[ \Big( \int_0^T |A_s|^2 ds \Big)^{q}  \Big]  \Big)^{\frac1q} \\
		&   \leq \Big( \E \Big[ \sup_{t \in [0,T]} |\tilde{X}_t^*|^{2p}  \Big] \Big)^{\frac1p}   \Big( \E\Big[ \Big(  2 (1 + |\Sigma|^2) \int_0^T \Big(|\lambda_s|^2 +  |\Lambda_s|^2\Big) ds \Big)^{q}  \Big]  \Big)^{\frac1q} \\
		&   \leq  c_q^{\frac1q} \Big( \E \Big[ \sup_{t \in [0,T]} |\tilde{X}_t^*|^{2p}  \Big] \Big)^{\frac1p}   \Big( \E\Big[ \exp\Big(  2\times 6 (1 + |\Sigma|^2) \int_0^T \Big(|\lambda_s|^2 +  |\Lambda_s|^2\Big) ds \Big)  \Big]  \Big)^{\frac1q} < \infty
	\end{align*}
	where the last term is finite due to condition  \eqref{eq:assumption_novikov} and the inequality $|z|^q\leq c_{q}e^{|z|}$ for all $z \in \R$, together with the bound in Equation~\eqref{eq:boundX}. 
	
	\smallskip
	\noindent Finally, as for the optimal value, the cost functional is minimized when $\c^*(\xi)$ is given by \eqref{eq:optimal_control} and  the optimal value of \eqref{pb:P(c)} is equal to \begin{align*}
		\tilde V(\xi) \; = \; \Gamma_0 \big| \Tilde{X}_0^{\c^*(\xi)} \big|^2 \;= \;  \Gamma_0 \big| X_0 - \xi e^{-\int_0^T r(s) ds} \big|^2,
	\end{align*}
	thanks to~\eqref{Eq:Square}. This gives \eqref{eq:optimal_vamue_inner}. The proof is complete and we are done \hfill $\Box$

\medskip\noindent {\bf Remark and conclusion:} As a concluding remark, we highlight a natural direction for future research that arises from this work. In particular, it would be of
interest to investigate the existence of a unique continuous solution $\psi \in C([0,T],\R^d)$ to the generalized inhomogeneous integral
Riccati-Volterra equation with L\'evy jump compensator~\eqref{eq:RiccatiGeneral}-~\eqref{eq:Funct_General}. Such systems involve non-linear cross-component interactions in \(\psi\) and pose significant analytical challenges that are beyond the scope of the present paper. 

	\medskip
\noindent {\bf Acknowledgements:} 
	 The authors would like to thank I. Kharroubi and G.  Pag\`es
    for their invaluable insights and for many fruitful and inspiring discussions.
	
	\bibliographystyle{plainnat}
	\bibliography{references}
	
	\appendix
	
	\section{Supplementary material and Proofs. }\label{app:lemmata}
		
			\subsection{Existence of solutions for inhomogeneous Riccati-Volterra equations in the continuous case}\label{app:SolRiccati}
			\noindent We derive the existence and uniqueness of the solution to an inhomogeneous Riccati--Volterra equation. 
			
			\noindent {$\rhd$ {\em Preliminaries}. As a first preliminary, we recall the following result which deals with non-negativity of solutions to linear deterministic Volterra
			equations.
			\begin{Lemma}\label{lemma:LinearVoltPositiv}
				Fix \(d\geq1\), $T > 0$ and let \(K\) be diagonal with scalar kernels $K_i$ on the diagonal for \(i=1,\ldots,d\) that is completely monotone on $(0, \infty)$ satisfying Assumption~\ref{assump:kernelJumpVolterra}~$(i)$.
				Let
				$F \in C([0,T],\mathbb{R}^d)$ and $G \in C([0,T],\mathcal{M}_d(\mathbb{R}))$ 
				be such that $F_i \ge 0$, and $G_{ij} \ge 0$ for all 
				$i,j = 1,\dots,d$ with $i \ne j$. Then the linear Volterra equation
				\begin{equation}\label{eq:volterra_linear}
					\chi(t) = \int_0^t K(t-s)\big(F(s) + G(s)\chi(s)\big)\,ds
				\end{equation}
				has a unique solution $\chi \in C([0,T],\mathbb{R}^d)$ satisfying 
				$\chi_i(t) \ge 0$ for all $t \in [0,T]$ and $i=1,\dots,d$. 
			\end{Lemma}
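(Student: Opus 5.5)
Existence and uniqueness come from a standard Picard argument; nonnegativity comes from the positivity-preserving structure of the completely monotone kernels together with the quasi-monotone (Metzler) structure of $G$.

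\textbf{Existence and uniqueness.} Define $\Phi(\chi)(t)=\int_0^t K(t-s)(F(s)+G(s)\chi(s))\,ds$ on $C([0,T],\mathbb{R}^d)$. Since $K_i\in L^2_{loc}\subset L^1_{loc}$ and $F,G$ are continuous, $\Phi$ maps continuous functions to continuous functions. Continuity of $t\mapsto \int_0^t K_i(t-s)u(s)\,ds$ follows from the integrability condition $(\widehat{\mathcal K}^{cont}_{\widehat\theta})$ and the continuity condition $({\cal K}^{cont}_{\theta})$ of Assumption~\ref{assump:kernelJumpVolterra}~$(i)$ via Cauchy--Schwarz. The equation is linear with bounded continuous coefficients. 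Equip $C([0,T],\mathbb{R}^d)$ with a weighted norm $\sup_t e^{-\lambda t}|\chi(t)|$, using $\int_0^t |K(t-s)|e^{-\lambda(t-s)}ds\to 0$ as $\lambda\to\infty$. Then $\Phi$ becomes a contraction, which gives a unique continuous solution. Alternatively, represent the solution through the resolvent of $K G$.

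\textbf{Nonnegativity.} First treat the case $G$ with all entries nonnegative, diagonal included. Picard iterates started at $\chi^{(0)}=0$ are then nonnegative, since $K_i>0$, $F\ge 0$ and $G\chi^{(n)}\ge 0$, and they converge to $\chi$. For general $G$ with only off-diagonal nonnegativity, split $G=\diag(g)+G^+$ with $G^+\ge 0$ entrywise, where $g_i=G_{ii}$ may be negative. Rewrite the $i$-th equation as
$$\chi_i=\int_0^t K_i(t-s)\big(F_i+(G^+\chi)_i\big)ds+\int_0^t K_i(t-s)g_i(s)\chi_i(s)\,ds.$$
The key tool is the scalar fact that for completely monotone $K_i$ and continuous $g_i$, the map $h\mapsto \chi_i$ solving $\chi_i=K_i*h+K_i*(g_i\chi_i)$ preserves nonnegativity. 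For constant $g_i=-\lambda$ this holds because the resolvent of the second kind $R_\lambda$ of $\lambda K_i$ has $K_i-K_i*R_\lambda$ nonnegative (indeed completely monotone) by the Gripenberg/Friedman theory of completely monotone kernels. For time-dependent $g_i$ I would bound $g_i\ge -\lambda$ with $\lambda=\|g_i^-\|_\infty$ and move $(g_i+\lambda)\chi_i$ into the forcing term. An iteration then keeps all forcings nonnegative: solve $\chi^{(n+1)}_i=(K_i-R_\lambda*K_i)*\big(F_i+(G^+\chi^{(n)})_i+(g_i+\lambda)\chi^{(n)}_i\big)$ starting from $0$. Each iterate is nonnegative, and the sequence converges to $\chi$ in a weighted sup norm by the same contraction estimate, with the resolvent kernel in $L^1$.

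The main obstacle is this scalar positivity step, namely that the resolvent-modified kernel $K_i-R_\lambda*K_i$ is nonnegative for completely monotone $K_i$. I would cite the known result (e.g.\ Abi Jaber--Larsson--Pulido, or Gripenberg--Londen--Staffans) rather than reprove it. One must also check that the rewritten iteration is equivalent to the original equation and is contractive, which is routine.
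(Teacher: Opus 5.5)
Your argument is correct, but it is not the paper's route. The paper gives no argument of its own: it invokes \cite[Theorem C.2]{abi2019affine}, which is exactly this statement on $\mathbb{R}_+$ (hence on $[0,T]$), and \cite[Theorem 12.1.1]{gripenberg1990} for continuity. You reconstruct the result instead, in three steps:
\begin{itemize}
\item a weighted sup-norm contraction gives existence, uniqueness and continuity at once;
\item the case $G\ge 0$ entrywise follows from monotone Picard iteration;
\item a negative diagonal is absorbed by writing $g_i=-\lambda+(g_i+\lambda)$.
\end{itemize}

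The step you flag as the main obstacle is in fact one line. From $R_\lambda+\lambda K_i*R_\lambda=\lambda K_i$ you get $K_i-K_i*R_\lambda=R_\lambda/\lambda$. So all you need is that the resolvent of the second kind of the completely monotone kernel $\lambda K_i$, $\lambda>0$, is nonnegative. That is the classical Gripenberg--Londen--Staffans result that such resolvents are completely monotone. For $\lambda=0$ the modified kernel is just $K_i$.

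The equivalence check is also lighter than you suggest; you need only one direction. Set $f=K_i*\big(F_i+(G^+\chi)_i+(g_i+\lambda)\chi_i\big)$. The true solution then satisfies $\chi_i=f-\lambda K_i*\chi_i$, so the variation-of-constants formula gives $\chi_i=f-R_\lambda*f$. Hence $\chi$ is a fixed point of your modified map. Because that map is a contraction in a weighted norm (its kernel $R_\lambda/\lambda$ is in $L^1([0,T])$), its fixed point is unique. So $\chi$ coincides with the uniform limit of your nonnegative iterates, and $\chi\ge 0$.

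What your route buys is transparency and a constructive, monotone-iteration proof. It shows that complete monotonicity enters only through $K_i\ge 0$ and $R_\lambda\ge 0$ for $\lambda>0$. Consequently the lemma holds verbatim for any nonnegative $L^1_{\mathrm{loc}}$ kernel with nonnegative $\lambda$-resolvents. The paper's citation is shorter and covers all of $\mathbb{R}_+$ directly, but it leaves the mechanism hidden.
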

			\noindent {\bf Proof:} This follows directly from \cite[Theorem C.2]{abi2019affine}. The continuity follows from the uniqueness of the global solution and ~\cite[Theorem 12.1.1]{gripenberg1990}. 
			\begin{Theorem}\label{Thm:Riccatilocalexistence} Fix a kernel $K$ satisfying Assumption~\ref{assump:kernelJumpVolterra}~$(i)$ for any $T>0$ along with functions $f: \R_+ \to \R^d$ and $F: \R_+ \times \R^d \to \R^d$ define by
				$$ F_i(t,x):=x^\top A_i(t)x + b_i(t)^\top x, \quad i=1,\ldots, d, \quad (t,x)\in \R_+\times \R^d,$$ where $A_i:\R_+\to \mathcal M_d(\R)$, $b_i:\R_+\to \R^d$, $f_i :\R_+\to \R$ are continuous functions. Then, it holds that: 
				
				\begin{itemize}
					\item[$(a)$] The deterministic Volterra equation 
					\begin{align}\label{eq:localexist}
						\psi(t) = \int_0^t K(t-s) \big(f(T-s) + F(T-s,\psi(s))\big) \d s, \quad t\in [0,T]
					\end{align} 
					admits a unique  non-continuable  solution $\psi \in C([0,T_{max}),\R^d)$ in the sense that $\psi$ satisfies   \eqref{eq:localexist} on $[0,T_{max})$ with $T_{max} \in (0,T]$ and $\sup_{t<T_{max}}|\psi( t)| = +\infty$, if $T_{max}<T$. 
				\end{itemize}
				\smallskip
				Moreover, let \(B:=\big(b_1,\ldots,b_d\big)\) and assume that \(B_{ij}\geq 0\) for all \(i,j=1,\ldots,d\) and $i\neq j$. 
				Assume further that $A_i = c_i a^i \, e_i e_i^\top,$ where \(e_i\) denote the \(i\)-th canonical basis vector of \(\mathbb{R}^d\), \(c_i \in \mathbb{R}\) and \(a^i : \mathbb{R}_+ \to \mathbb{R}_+\) is a continuous function.
				\begin{itemize} 
					\item[$(b)$] If \(f\) has nonpositive components, then the unique non-continuable continuous solution \((\psi, T_{\max})\) to Equation~\eqref{eq:localexist} satisfies \(\psi_i \le 0\) for \(i=1,\ldots,d\), that is, $\psi \in C([0,T_{max}),\R^d_-)$.
					 
				\item[$(c)$] if \(c_i\geq0\) for all \(i=1,\ldots,d\), then for any \(f \in C([0,T], \mathbb{R}^d_-)\), the Riccati--Volterra equation \eqref{eq:localexist} admits a unique global solution 
				\(\psi \in C([0,T], \mathbb{R}^d_-)\), that is, \(\psi_i \le 0\) for \(i=1,\ldots,d\).
				\end{itemize}  
			\end{Theorem}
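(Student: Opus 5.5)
The plan is to treat $(a)$ as a standard local well-posedness statement for Volterra equations with locally Lipschitz nonlinearities. I would then obtain $(b)$ and $(c)$ by rewriting the Riccati equation as a \emph{linear} Volterra equation with coefficients frozen along the solution itself, and applying Lemma~\ref{lemma:LinearVoltPositiv} (positivity for linear Volterra equations) on every compact subinterval $[0,T']\subset[0,T_{max})$.

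\emph{Step 1 (part $(a)$).} The map $(t,x)\mapsto f(T-t)+F(T-t,x)$ is continuous in $t$ and quadratic, hence locally Lipschitz, in $x$, uniformly on compact subsets of $[0,T]\times\R^d$. Under Assumption~\ref{assump:kernelJumpVolterra}~$(i)$ we have $K\in L^2_{loc}$. A Picard/contraction argument on $C([0,\delta],\R^d)$ for small $\delta$, using $\int_0^\delta |K(s)|\,ds\to0$, gives local existence and uniqueness. Concatenation and Zorn's lemma then yield a maximal solution, for which the blow-up alternative $\sup_{t<T_{max}}|\psi(t)|=+\infty$ holds if $T_{max}<T$. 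Concretely, I would cite \cite[Theorem 12.1.1]{gripenberg1990} for existence of a non-continuable continuous solution, and derive uniqueness from a Gronwall-type inequality for Volterra kernels, using local Lipschitz continuity on bounded sets.

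\emph{Step 2 (part $(b)$).} With $A_i=c_ia^ie_ie_i^\top$ we have $F_i(t,x)=c_ia^i(t)x_i^2+(B^\top x)_i$. Fix $T'<T_{max}$. On $[0,T']$, $\chi:=-\psi$ solves
\begin{equation*}
\chi(t)=\int_0^tK(t-s)\Big(-f(T-s)+\big(B(T-s)^\top-\diag\big(c_ia^i(T-s)\psi_i(s)\big)_i\big)\chi(s)\Big)ds,
\end{equation*}
because $c_ia^i\psi_i^2=-c_ia^i\psi_i\,\chi_i$. This is of the form \eqref{eq:volterra_linear} with continuous forcing $-f(T-\cdot)\ge0$ and continuous matrix coefficient. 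The off-diagonal entries of that coefficient are the off-diagonal entries of $B^\top$, which are nonnegative, since the diagonal correction does not affect them. Lemma~\ref{lemma:LinearVoltPositiv} gives a unique continuous solution, which is nonnegative. By uniqueness it coincides with $-\psi$, so $\psi\le0$ on $[0,T']$, and hence on $[0,T_{max})$.

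\emph{Step 3 (part $(c)$).} Here the goal is an a priori bound excluding blow-up. Let $\underline\chi\in C([0,T],\R^d)$ be the global solution of the linear equation $\underline\chi(t)=\int_0^tK(t-s)\big(f(T-s)+B(T-s)^\top\underline\chi(s)\big)ds$, which exists by the linear theory. On $[0,T']$ the difference $\psi-\underline\chi$ solves \eqref{eq:volterra_linear} with forcing $\big(c_ia^i(T-\cdot)\psi_i^2\big)_i\ge0$, using $c_i\ge0$ and $a^i\ge0$, and with coefficient $B(T-\cdot)^\top$, whose off-diagonal entries are nonnegative. Lemma~\ref{lemma:LinearVoltPositiv} therefore gives $\psi\ge\underline\chi$. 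Combined with Step 2, this yields $\underline\chi\le\psi\le0$ on $[0,T_{max})$, so $\sup_{t<T_{max}}|\psi(t)|\le\|\underline\chi\|_{\infty,T}<\infty$. The blow-up alternative then forces $T_{max}=T$, and continuity extends $\psi$ to the closed interval $[0,T]$.

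The main obstacle I anticipate is justifying the applications of Lemma~\ref{lemma:LinearVoltPositiv}. The frozen coefficients involve $\psi$ itself, so they are only continuous on $[0,T']$ with $T'<T_{max}$, and one must work interval by interval and rely on uniqueness to identify the two solutions. One must also check carefully that the index convention ($b_i^\top x=\sum_j B_{ji}x_j$) really produces a coefficient matrix with nonnegative off-diagonal entries; this holds because the hypothesis $B_{ij}\ge0$ for $i\ne j$ is symmetric in $(i,j)$.
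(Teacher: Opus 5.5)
Your proposal is correct and follows the paper's proof essentially step for step: standard local Volterra theory for $(a)$; freezing the quadratic term along $\psi$ and applying Lemma~\ref{lemma:LinearVoltPositiv} to $-\psi$ for $(b)$; and comparison with the solution of the linearized equation to exclude blow-up for $(c)$. Your single-case treatment of $(b)$ works for any sign of $c_i$, because the lemma only constrains off-diagonal entries, so it is cleaner than the paper's split into $c_i\ge0$ and $c_i<0$.

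One small slip: since $-c_ia^i\psi_i^2=c_ia^i\psi_i\chi_i$, the frozen coefficient for $\chi=-\psi$ is $B(T-s)^\top+\diag\big(c_ia^i(T-s)\psi_i(s)\big)_i$, with a plus sign rather than a minus. The correct sign is needed for $-\psi$ to actually solve your displayed equation, so that the identification by uniqueness goes through. It changes only the diagonal, so the application of the lemma is unaffected.
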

			\noindent {\bf Proof:}
				\smallskip
				\noindent  {\sc Step~1} \textit{(Existence and uniqueness of local or maximally defined solution.)}  The first claim concerns the existence of local solutions to deterministic Volterra equations  of Hammerstein type: 
				It follows  from \cite[Theorem 12.2.6 or Theorem 12.1.1]{gripenberg1990} (see also \cite[ Theorem 3.1.2 ]{Brunner2017} ). In addition, note that $T_{max}$ is chosen to be  maximal, in the sense that the solution cannot be continued
				beyond $[0,T_{max})$.
				
				\smallskip
				\noindent
				{\bf Remark:} More generally, assume Equation~\eqref{eq:localexist} is defined on \(\R_+\). If \( f \in C(\mathbb{R}_+, \R^d) \) (resp.\( f \in  L^1_{\mathrm{loc}}(\mathbb{R}_+; \R^d) \) ), a non-continuable solution of~\eqref{eq:localexist} is a pair \((\psi, T_{\max})\) with \(T_{\max} \in (0,\infty]\) and $\psi \in C([0,T_{max}),\R^d)$
					(resp.\(\psi \in L^2_{\mathrm{loc}}([0, T_{\max})); \R^d)\), such that \(\psi\) satisfies~\eqref{eq:localexist} on 
					\([0, T_{\max})\) and $\sup_{t<T_{max}}|\psi( t)| = +\infty$ (resp. \(\|\psi\|_{L^2(0, T_{\max})} = \infty\) ) whenever \(T_{\max} < \infty\). 
					If \(T_{\max} = \infty\), we call \(\psi\) a global solution of~\eqref{eq:localexist}.
			
			\smallskip
			\noindent  {\sc Step~2} \textit{(Non-positivity of the solution.)} 
			We now deal with the non-positivity of  solutions to the deterministic Volterra equation~\eqref{eq:localexist}. For this, we consider two cases.\\
			1. If {\textit{\(c_i \geq 0\)}}, we observe that,  
			on the interval $[0,T_{\rm max})$, the function   $- \psi^i$ satisfies the linear equation
			\begin{equation}\label{eq:mod_chi}
				\chi_i(t) = \int_0^t K_i(t-s) \left( -f_i(T-s) + \big(B^\top(T-s) \chi(s)\big)_i+  c_ia^i(T-s)\psi^i (s) \chi^i(s)  \right) ds, \; t< T_{\max}\leq T.
			\end{equation}
			which has by Lemma~\ref{lemma:LinearVoltPositiv} a unique solution  $\chi\in C([0,T_{\rm max}),\R^d)$ with $\chi_i\ge0$, $i=1,\ldots, d,$ owing to assumption \ref{assump:kernelJumpVolterra}~$(i)$ on \(K\) and the fact that $-f_i \geq0$ (\(f\) has nonpositive components) and \(B_{ij}\geq 0\) for $i\neq j$.
			 Then, the function $\psi  \in C([0,T_{\rm max}), \R_{-}^d)$ solves the Riccati--Volterra equation \eqref{eq:localexist}. 
			 
			 \smallskip
			 \noindent 2. If {\textit{\(c_i < 0\)}}, its suffices to consider \(\tilde{\psi} := c_i\psi\). Then \(\tilde{\psi}\) is such that \(\tilde{\psi}^i\) satisfies the linear equation
			 \begin{equation}\label{eq:mod_chi2}
			 	\chi_i(t) = \int_0^t K_i(t-s) \left( c_if_i(T-s) + \big(B^\top(T-s) \chi(s)\big)_i +  a^i(T-s)\psi^i (s) \chi^i(s)  \right) d\,s, \; t< T_{\max}\leq T.
			 \end{equation}
			 which has, still by Lemma~\ref{lemma:LinearVoltPositiv} a unique solution  $\chi\in C([0,T_{\rm max}),\R^d)$ with $\chi_i\ge0$, $i=1,\ldots, d,$ owing to assumption \ref{assump:kernelJumpVolterra}~$(i)$ on \(K\) and the fact that $c_if_i \geq0$ (\(f\) has nonpositive components) and \(B_{ij}\geq 0\) for $i\neq j$.\\
			Finally, in both cases, there exists a
			unique maximally defined continuous solution $\psi  \in C([0,T_{\rm max}), \R_{-}^d)$ to the Riccati--Volterra equation \eqref{eq:localexist}
			
			\smallskip
			\noindent  {\sc Step~3} \textit{Global existence:} We are now going to show that any local solution can be extended
			to a local solution on a larger interval.
			Our aim is to prove that $T_{\rm max} \geq T$ for every \(T>0\) by showing that
			\begin{equation}\label{eq:temptmax}
				\sup_{t< T_{\rm max}} | \psi(t)| < \infty.  
			\end{equation}
			\noindent Let $\ell \in C([0,T],\R^d)$ be the solution of the linear deterministic Volterra equation
			\begin{align}
				\ell(t) = \int_0^t K(t-s) \big(f(T-s) + B^\top(T-s) \ell(s)\big) \d s, \quad t\in [0,T]
			\end{align}
			\noindent Observing that the function  $ \chi:=\psi - \ell$  satisfies the equation 
			\begin{equation}\label{eq:sumVolt}
				\chi_i(t) = \int_0^t K_i(t-s) \left( \big(B^\top(T-s) \chi(s)\big)_i+  c_ia^i(T-s) \psi^i(s)^2  \right) d\,s, \quad t< T_{\max}\leq T.
			\end{equation}
			on $[0,T_{\rm max})$, another application of Lemma~\ref{lemma:LinearVoltPositiv} yields that \(\ell^i \le \psi^i\) on \([0,T_{\max})\).
			In summary, we have shown that
			\[
			\ell^i \le \psi^i \le 0 
			\quad \text{on } [0,T_{\max}), 
			\qquad i=1,\ldots,d.
			\]
			Since $\ell$  is a global solution (apply Lemma~\ref{lemma:LinearVoltPositiv} with \(\chi:=-\ell\)) and thus have finite norm on any bounded interval, this implies \eqref{eq:temptmax} so that $T_{\rm max}\geq T$ as needed. This complete the proof.\qed
			\begin{Corollary}\label{Corol:Riccatilocalexistence} Let $T>0$ be fixed and suppose that the assumptions of 
				Theorem~\ref{Thm:Riccatilocalexistence}~$(c)$ hold.
				Assume moreover that the matrix-valued function 
				$B : \mathbb{R}_+ \to \mathcal{M}_d(\mathbb{R})$ is diagonal, i.e., $\diag{(B_1,\dots, B_d)}=:B :\R_+\to \mathcal M_d(\R)$. For $i=1,\ldots,d$, define \(\bar{\lambda}_i:=-\sup_{t\in [0,T]} B_i(t)\) and assume that $\bar{\lambda}_i \neq 0$.
			 Then the unique global solution 
			 \(\psi \in C([0,T], \mathbb{R}^d_-)\) of the Riccati--Volterra equation \eqref{eq:localexist} 
			 satisfies the following bound:
			 \begin{equation}
			 	\sup_{t \in [0,T]} |\psi^i(t)| \leq \frac{\|f_i\|_{\sup, T}}{\bar{\lambda}_i}\int_0^T f_{\bar{\lambda}_i}(s)ds = \frac{\|f_i\|_{\sup, T}}{\bar{\lambda}_i}(1- R_{\bar{\lambda}_i}(T)), \;i=1,\ldots,d.
			 \end{equation}
			 where $R_{\bar{\lambda}_i}$ is the \textit{ $\bar{\lambda}_i$-resolvent} associated to the real-valued kernel $K_i$ and $f_{\bar{\lambda}_i}$ its antiderivative (see e.g.,~\cite[Section 2.2]{EGnabeyeu2025}).
			\end{Corollary}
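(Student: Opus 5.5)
The plan is a two-step comparison argument built on Lemma~\ref{lemma:LinearVoltPositiv}, exactly in the spirit of Step~3 of the proof of Theorem~\ref{Thm:Riccatilocalexistence}. Since $B$ is diagonal, the system decouples in its linear part, and each component can be handled separately. First, from Theorem~\ref{Thm:Riccatilocalexistence}~$(c)$ we already know that $\psi\in C([0,T],\R^d_-)$ is global. Step~3 of that proof also gives $\ell^i\le\psi^i\le 0$ on $[0,T]$, where $\ell$ solves the linear equation $\ell(t)=\int_0^tK(t-s)\big(f(T-s)+B^\top(T-s)\ell(s)\big)ds$. With $B$ diagonal this reads componentwise
\[
\ell_i(t)=\int_0^tK_i(t-s)\big(f_i(T-s)+B_i(T-s)\ell_i(s)\big)ds,
\]
and $\ell_i\le 0$. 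Hence $|\psi^i(t)|\le|\ell_i(t)|=-\ell_i(t)$, and it suffices to bound $-\ell_i$ from above.

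Second, I compare $\ell_i$ with the solution $m_i$ of the constant-coefficient scalar equation
\[
m_i(t)=\int_0^tK_i(t-s)\big(-\|f_i\|_{\sup,T}-\bar\lambda_i m_i(s)\big)ds .
\]
Setting $\chi=\ell_i-m_i$, the integrand splits as
\[
\big(f_i+\|f_i\|_{\sup,T}\big)+\big(B_i+\bar\lambda_i\big)\ell_i-\bar\lambda_i\chi .
\]
The first term is nonnegative by definition of the sup norm. The second term is also nonnegative, since $B_i(T-s)\le-\bar\lambda_i$ (by definition of $\bar\lambda_i$) and $\ell_i\le0$. So $\chi=K_i*(F+G\chi)$ with $F\ge0$ and the scalar coefficient $G=-\bar\lambda_i$, for which there is no off-diagonal sign condition in dimension one. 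Lemma~\ref{lemma:LinearVoltPositiv} then gives $\chi\ge0$, i.e.\ $m_i\le\ell_i\le\psi^i\le0$, and therefore $|\psi^i|\le -m_i$.

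Third, I solve for $m_i$ explicitly with the resolvent. Let $f_{\bar\lambda_i}$ be the resolvent of the second kind of $\bar\lambda_iK_i$, so that $f_{\bar\lambda_i}=\bar\lambda_iK_i-\bar\lambda_iK_i*f_{\bar\lambda_i}$, and put $R_{\bar\lambda_i}(t)=1-\int_0^tf_{\bar\lambda_i}$. Standard resolvent calculus (as in \cite[Section 2.2]{EGnabeyeu2025}) gives
\[
m_i(t)=-\frac{\|f_i\|_{\sup,T}}{\bar\lambda_i}\int_0^tf_{\bar\lambda_i}(s)\,ds .
\]
It remains to take the supremum over $t\in[0,T]$. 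When $\bar\lambda_i>0$ and $K_i$ is completely monotone, $f_{\bar\lambda_i}\ge0$. Then $t\mapsto\int_0^tf_{\bar\lambda_i}$ is nondecreasing, the supremum is attained at $t=T$, and this yields the stated bound $\frac{\|f_i\|_{\sup,T}}{\bar\lambda_i}\big(1-R_{\bar\lambda_i}(T)\big)$.

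I expect the main obstacle to be precisely this last monotonicity/sign step when $\bar\lambda_i<0$. In that case $f_{\bar\lambda_i}$ is no longer a probability density, and one must check separately that $-m_i$ is still nonnegative and nondecreasing so that the supremum sits at $T$. My plan there is to observe that $-m_i$ solves $-m_i=K_i*(\|f_i\|+\bar\lambda_i m_i)$ with a nonnegative forcing term. Its nonnegativity then follows again from Lemma~\ref{lemma:LinearVoltPositiv}. For monotonicity I would use that $K_i$ is nonincreasing, via the positivity of the increments $-m_i(t+h)+m_i(t)$.
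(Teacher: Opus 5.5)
Your proposal is correct and is essentially the paper's proof: your $-\ell_i$ and $-m_i$ are exactly the paper's comparison functions $g$ (solving $g=K_i*(|f_i(T-\cdot)|+B_i(T-\cdot)g)$) and $h$ (the Wiener--Hopf solution), obtained by the same two applications of Lemma~\ref{lemma:LinearVoltPositiv} with the same nonnegative forcing $(-\bar\lambda_i-B_i(T-\cdot))g+\|f_i\|_{\sup,T}-|f_i(T-\cdot)|$, and closed by the same resolvent formula. For $\bar\lambda_i<0$ (the paper only records positivity via the Neumann series there), the monotonicity you need is immediate from $-f_{\bar\lambda_i}=|\bar\lambda_i|K_i+|\bar\lambda_i|K_i*(-f_{\bar\lambda_i})\ge 0$, so $t\mapsto\frac{1}{\bar\lambda_i}\int_0^tf_{\bar\lambda_i}(s)\,ds$ is nondecreasing in both cases without using that $K_i$ is nonincreasing.
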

			\noindent {\bf Proof:} If the matrix \(B\) is diagonal (which is the case if for example the matrix $D$ in the drift of the volatility process is a diagonal matrix, i.e. $D=-\diag{(\lambda_1,\dots, \lambda_d)}$), namely  $\diag{(B_1,\dots, B_d)}=:B :\R_+\to \mathcal M_d(\R)$, then the vector valued equation~\eqref{eq:localexist} can be decomposed into $d$ real valued inhomogeneous Riccati-Volterra equations such that for the $i-$th component $\psi^i$ of $\psi$, $i=1,\ldots,d,$ we obtain 
			\vspace{-.3cm}
			\begin{equation}\label{comp}
				\psi^i(t)=\int_0^t K_i(t-s)\Big[ f_i(T-s) + B_i(T-s)\psi^i(s)+c_ia^i(T-s)\psi^i(s)^2\Big]ds, \quad t\leq T.
			\end{equation}
			Theorem~\ref{Thm:Riccatilocalexistence}~$(c)$ above still guarantees the existence of a unique continuous global solution of the equation \eqref{comp} on \([0,T]\). Now, let $g \in C([0,T],\R_+)$ be the solution of the linear deterministic Volterra equation
			\vspace{-.3cm}
			\begin{align}
				g(t) = \int_0^t K_i(t-s) \big(|f_i(T-s)| + B_i(T-s) g(s)\big) \d s, \quad t\in [0,T]
			\end{align}
			Since $f_i\leq0$, we get that $\psi^i$ is non-positive and the fact that the function  $ \chi:=\psi^i +g$  satisfies the equation~\eqref{eq:sumVolt} on \([0,T]\) leads (still owing to Lemma~\ref{lemma:LinearVoltPositiv} and noting that \(c_i\geq0\) for all \(i=1,\ldots,d\)) to \(-g \le \psi^i\leq 0\) on \([0,T]\). We obtain that \(|\psi^i|\leq g\). Moreover define \(h\) as the unique continuous solution of the following linear deterministic Wiener-Hopf equation
			\begin{align}\label{eq:wienerhopf}
				h(t) = \int_0^t K_i(t-s) \big(\|f_i\|_{\sup, T} -\bar{\lambda}_i h(s)\big) \d s, \quad t\in [0,T]
			\end{align}
			where \(\bar{\lambda}_i:=\inf_{t\in [0,T]} -B_i(t)\).
			Then, $h - g$ solves the equation \(\chi = K_i * (-\bar{\lambda}_i \chi + w),\)
			with \(w = (-\bar{\lambda}_i - B_i(T-\cdot))g + \|f_i\|_{\sup,T} - |f_i(T-\cdot)|,\)
			which is a non-negative function on $[0,T]$. Another application of Lemma~\ref{lemma:LinearVoltPositiv} now yields \(g \leq h\) so that \(|\psi^i|\leq g \leq h\).
			Finally, the claimed bound follows by noticing that, the unique solution of the linear deterministic Wiener-Hopf equation~\eqref{eq:wienerhopf} given by \cite[Proposition 2.4]{EGnabeyeu2025} reads:
			\begin{equation}\label{eq:sol_h}h(t) = ((K_i-f_{\bar{\lambda}_i}*K_i)*\|f_i\|_{\sup, T})(t)=\frac{1}{\bar{\lambda}_i}(f_{\bar{\lambda}_i}*\|f_i\|_{\sup, T})(t) =\frac{\|f_i\|_{\sup, T}}{\bar{\lambda}_i}(1- R_{\bar{\lambda}_i}(t)), \; t \in [0,T].
		   \end{equation}
		   Note that, the right hand side is always positive. Indeed, when \(\bar{\lambda}_i>0\), then by~\cite[Proposition 5.1.]{EGnabeyeu2025}, the function \(1- R_{\bar{\lambda}_i}\) is a Bernstein function, thus non-negative and non-decreasing. 
		   If we rather have \(\bar{\lambda}_i<0\), then we note that \(-1+ R_{\bar{\lambda}_i}\) is non-negative owing to the \textit{Neumann series expansion} of \(R_{\bar{\lambda}_i}\) given in~\cite[Equation 2.7]{EGnabeyeu2025}.
		   This complete the proof and we are done. \hfill $\square$

\end{document}